\crefname{hypothesis}{Hypothesis}{Hypotheses}
\title{Fast multigrid reduction-in-time for advection via modified semi-Lagrangian coarse-grid operators\thanks{Submitted to the editors March 24, 2022.
\funding{This work was performed under the auspices of the U.S. Department of Energy by Lawrence Livermore National Laboratory under Contract DE-AC52-07NA27344 (LLNL-JRNL-833088).
This work was supported in part by the U.S. Department of Energy, Office of Science, Office of Advanced Scientific Computing Research, Applied Mathematics program, and by NSERC of Canada.
}}}
\author{H. De Sterck\thanks{Department of Applied Mathematics, University of Waterloo, Waterloo, Ontario, Canada} 
  (\email{hans.desterck@uwaterloo.ca}).
\and R. D. Falgout\thanks{Center for Applied Scientific Computing, Lawrence Livermore National Laboratory, Livermore, California, USA 
  (\email{falgout2@llnl.gov}).}
\and O. A. Krzysik\thanks{Department of Applied Mathematics, University of Waterloo, Waterloo, Ontario, Canada
  (\email{okrzysik@uwaterloo.ca}, \url{https://orcid.org/0000-0001-7880-6512}). The work of this author was partially supported by an Australian Government Research Training Program Scholarship.}
  }
\DeclareMathOperator{\diag}{diag} 
\Crefname{subsection}{Section}{Sections}
\renewcommand{\d}[0]{\ensuremath{\operatorname{d}\!}} 
\definecolor{darkgreen}{rgb}{0, 0.5, 0}
\begin{document}




\maketitle

\begin{abstract}
	Many iterative parallel-in-time algorithms have been shown to be highly efficient for diffusion-dominated partial differential equations (PDEs), but are inefficient or even divergent when applied to advection-dominated PDEs. 
	We consider the application of the multigrid reduction-in-time (MGRIT) algorithm to linear advection PDEs. 
	The key to efficient time integration with this method is using a coarse-grid operator that provides a sufficiently accurate approximation to the the so-called ideal coarse-grid operator.
	For certain classes of semi-Lagrangian discretizations, we present a novel semi-Lagrangian-based coarse-grid operator that leads to fast and scalable multilevel time integration of linear advection PDEs. 
	The coarse-grid operator is composed of a semi-Lagrangian discretization followed by a correction term, with the correction designed so that the leading-order truncation error of the composite operator is approximately equal to that of the ideal coarse-grid operator.
	Parallel results show substantial speed-ups over sequential time integration for variable-wave-speed advection problems in one and two spatial dimensions, and using high-order discretizations up to order five.
	The proposed approach establishes the first practical method that provides small and scalable MGRIT iteration counts for advection problems.
\end{abstract}

\begin{keywords}
	parallel-in-time, MGRIT, Parareal, hyperbolic PDE, advection equation, multigrid
\end{keywords}

\begin{AMS}
	65F10, 65M22, 65M55, 35L03
\end{AMS}

\section{Introduction}
\label{sec:introduction}

Traditionally, the solutions of initial-value partial differential equation (PDE) problems are approximated numerically via the sequential process of time-stepping, an approach motivated by the temporally causal nature of the solution itself.
However, there also exist many time-parallel methods for simulating these PDEs, and these can often yield reductions in wall-clock time relative to time-stepping.
A history and broad survey of the field of parallel-in-time methods can be found in the review \cite{Gander_2015}, with the later review \cite{Ong_Schroder_2020} providing a summary of more recent developments. 

Presently, parallel-in-time methods are not widely used for large-scale PDE simulations, with the traditional technique of time-stepping remaining the standard.
As supercomputer architectures continue to use more and more cores, parallel-in-time methods are likely to become essential for circumventing sequential time-stepping bottlenecks. 
However, one issue likely to limit their practicability is a lack of robustness for hyperbolic PDEs, or more broadly for advection-dominated problems.
In particular, this is true for the multigrid reduction-in-time (MGRIT) algorithm \cite{Falgout_etal_2014}, which is the algorithm that we focus on in this work.

MGRIT enables time-parallelism by applying local time-stepping in parallel across a sequence of temporally coarsened grids combined with a global coarse-grid correction on a coarsest grid.
The well-known time-parallel Parareal algorithm \cite{Lions_etal_2001} can be interpreted as a special case of MGRIT that uses only two levels and a specific choice for the relaxation scheme \cite{Gander_etal_2018}, and thus our work also applies to Parareal.
MGRIT is iterative, and its convergence properties hinge on how well the coarse-grid problem approximates the fine-grid problem.
The default technique for developing the coarse-grid problem is to \textit{rediscretize} the fine-grid PDE on the coarsened mesh, and this tends to work excellently for diffusion-dominated problems
\cite{Lions_etal_2001, 
Bal_Maday_2002, 
Falgout_etal_2014, 
Falgout_etal_2017_nonlin,
Falgout_etal_2019}. 
Conversely, it is well-documented that convergence tends to be substantially worse for advection-dominated PDEs when using rediscretization, or closely related techniques \cite{Chen_etal_2014, Dai_Maday_2013, Dobrev_etal_2017, Gander_Vandewalle_2007, Gander_2008, Hessenthaler_etal_2018, Howse_etal_2019, Howse2017_thesis, Nielsen_etal_2018, Ruprecht_2018, Schmitt_etal_2018, Steiner_etal_2015, DeSterck_etal_2019, DeSterck_etal_2021, KrzysikThesis2021}.

Unsurprisingly, it has also been observed that MGRIT convergence may deteriorate substantially when transitioning from the diffusion-dominated regime of an advection-diffusion problem to the advection-dominated regime \cite{Steiner_etal_2015,Howse2017_thesis,Schmitt_etal_2018,Ruprecht_2018}. In fact, this observation served as the motivation for an idea that was explored in \cite{Schroder_2018}, with some preliminary numerical experiments showing that MGRIT convergence of certain advection discretizations can be improved substantially by adding judiciously chosen amounts of numerical dissipation on the fine grid.

While we focus on multigrid-in-time methods in this paper, we note that some other parallel-in-time methods have shown potential for solving advection-dominated or wave-related problems.
Some examples include using reduction-based algebraic multigrid on the space-time system \cite{Sivas_etal_2021}, block-circulant preconditioning techniques applied to the space-time system \cite{Liu_Wu_2020}, and direct solution techniques based on diagonalizing in the time direction \cite{Gander_etal_2019}.

In this paper, we develop a novel coarse-grid operator leading to the fast MGRIT solution of linear advection problems, building on our earlier work \cite{DeSterck_etal_2021} of optimizing coarse-grid operators for constant-wave-speed advection problems.
In that work, we showed that it is possible to achieve fast MGRIT convergence on advection problems when using a carefully chosen coarse-grid operator. 
The coarse-grid operator proposed here is based on a semi-Lagrangian discretization, and is applied to certain classes of semi-Lagrangian discretizations on the fine grid; unlike the coarse-grid operators in \cite{DeSterck_etal_2021}, the coarse-grid operator is practically computable, and not limited only to constant-wave-speed problems.
Employing a semi-Lagrangian coarse-grid operator was tested in \cite{Schmitt_etal_2018}; however, solver convergence was not robust with respect to the strength of advection in the advection-diffusion test problem considered.
In fact, we will show even for the case of constant-wave-speed advection that simply employing a coarse-grid semi-Lagrangian operator does not result in effective convergence.

Our semi-Lagrangian coarse-grid operator is designed to provide a more accurate approximation to the ideal coarse-grid operator than basic rediscretization, using a truncation error approach to more faithfully match the coarse operator to the ideal coarse operator. Our motivation stems in part from the realization that MGRIT convergence issues for advection-dominated problems are in some sense analogous to the well-known issues that plague the multigrid solution of steady state advection-dominated problems \cite{Brandt_1981,Brandt_Yavneh_1993,Yavneh_1998,
Trottenberg_etal_2001,Wan_Chan_2003,Bank_etal_2006}. 
In particular, the idea of increasing the order of accuracy of the coarse-grid operator for steady state advection relative to the fine-grid operator, as first proposed in \cite{Yavneh_1998} and pointed out to us by the author, is a key insight and inspiration for our work. 
Developing such a coarse-grid operator in the MGRIT context involves additional stability challenges due to the coarsening being in only one coordinate direction rather than in all directions uniformly. The link between slow multigrid convergence in the case of steady state advection in multi-dimensional space, as analyzed in \cite{Yavneh_1998}, and slow multigrid convergence in the case of space-time hyperbolic problems with MGRIT, is investigated in detail in \cite[Chap. 3]{KrzysikThesis2021}.

This manuscript focuses on the MGRIT solution of semi-Lagrangian discretizations of advection problems; however, we stress that the principles developed here can be extended to the MGRIT solution of classical method-of-lines discretizations of hyperbolic problems, which is a subject of our on-going research.
Finally, we remark that this manuscript is based on the PhD thesis \cite[Chap. 4]{KrzysikThesis2021}, and as such, we refer the interested reader there for further details on this work.

The remainder of this manuscript is organized as follows. 
\Cref{sec:prelims} describes the semi-Lagrangian discretizations we consider, followed by an algorithmic description of MGRIT and motivating examples.
\Cref{sec:var} presents the proposed coarse-grid operator.
\Cref{sec:2D} generalizes the coarse-grid operator from one to two spatial dimensions. 
\Cref{sec:parallel} presents parallel results.
Concluding remarks are given in \Cref{sec:conclusions}.

\section{Preliminaries}
\label{sec:prelims}

\subsection{Semi-Lagrangian discretization of the one-dimensional advection equation}
\label{sec:discretization}

We consider semi-Lagrangian discretizations of one-dimensional, variable-wave-speed advection problems of the form
\begin{align} \label{eq:ad}
\frac{\partial u}{\partial t} + \alpha(x, t)\frac{\partial u}{\partial x} = 0,
\quad (x, t) \in \Omega \times (0, T],
\quad u(x, 0) = u_0(x),
\end{align}
for spatial domain $\Omega \subset \mathbb{R}$, and solution $u$ subject to periodic boundary conditions on $\partial \Omega$. 
Specifically, our numerical tests for this one-dimensional problem will use the initial condition $u_0(x) = \sin^4 (\pi x)$, and the spatial domain $\Omega = (-1, 1)$.
The MGRIT coarse-grid operators we propose for the multigrid solution of these problems rely on a detailed understanding of semi-Lagrangian methods, which we now describe.
We consider semi-Lagrangian discretizations of \eqref{eq:ad} that are based on finite differences, assume a sufficient degree of smoothness of the solution, and assume a readily computable wave-speed $\alpha(x,t)$ for any $x$ and $t$ is available. 
The reader is directed to \cite{Falcone_Ferretti_2014} and \cite[Sec. 7]{Durran_2010} for more detailed descriptions of semi-Lagrangian methods.

Consider discretizing the spatial domain $\Omega$ with a set of $n_x$ equidistant nodes $\bm{x} = (x_1, \ldots, x_{n_x})^\top$, $x_{i+1} = x_{i} + h$.
Furthermore, discretize the time interval $t \in [0, T]$ with an equidistant mesh of $n_t+1$ nodes $0 = t_0 < \cdots < t_{n_t} = T$, $t_{n+1} = t_n + \delta t$.
Given the vector $\bm{u}_n \approx u(\bm{x}, t_n)$, which represents the approximate solution of \eqref{eq:ad} at time $t_n$ at spatial mesh points $\bm{x}$, the semi-Lagrangian method advances this to a new approximation $\bm{u}_{n+1} \approx u(\bm{x}, t_{n+1})$ at $t_{n+1}$ as we now describe. 
The Lagrangian formulation of the PDE \eqref{eq:ad} is 
\begin{align}
\label{eq:LagForm}
\frac{\d }{\d t} \xi(t) = \alpha(\xi(t),t), 
\quad
\frac{\d}{\d t}u(\xi(t),t) = 0, 
\end{align}
in which the curves $(x,t) = (\xi(t), t)$ are the \textit{characteristics} of the PDE.
On some characteristic $\xi_i(t)$, the evolution equation---the latter equation in \eqref{eq:LagForm}---states that the PDE solution is constant.
Since the solution at the mesh point $(x,t) = (x_i, t_{n+1})$ is desired, one forces the characteristic $\xi_i(t)$ to pass through this point, such that the solution at this point is simply the solution at the foot of the characteristic at $t = t_n$ (see \Cref{fig:departure_diagram}).
To this end, define the local characteristic $\xi^{(t_n, \delta t)}_i(t)$ to be that which passes through the \textit{arrival point} $(x, t) = (x_i, t_{n+1})$. Then, the associated \textit{departure point} $(x, t) = \big( \xi^{(t_n, \delta t)}_i(t_n), t_n \big)$ is given by the solution of the final-value problem
\begin{align} \label{eq:depart_ODE}
\frac{\d}{\d t} \xi^{(t_n, \delta t)}_i(t) = \alpha \big( \xi^{(t_n, \delta t)}_i(t), t \big), \quad t \in [t_n, t_{n+1}), \quad \xi^{(t_n, \delta t)}_i(t_{n+1}) = x_i,
\end{align}
at $t = t_n$.
For general wave-speed functions $\alpha$, the initial-time solution of \eqref{eq:depart_ODE} cannot be found exactly, and thus needs to be approximated in some way.
In this paper, we approximate this departure point on the fine grid by integrating backwards using a single explicit Runge-Kutta (ERK) step of size $\delta t$.\footnote{On coarser levels in our multigrid-in-time hierarchy, we will explore several alternative possibilities rather than simply redeploying the ERK method with a coarse time step, which is not sufficiently accurate.}  
Suppose that the ERK method has a global accuracy of order $r$, such that each departure point is located with an accuracy of ${\cal O}(\delta t^{r+1})$ (e.g., forward Euler has $r = 1$).
In particular, we will consider ERK schemes with $r = 1,3,$ and $5$, with the Butcher tableaux for the specific schemes available from \cite[App. A.1]{KrzysikThesis2021}.

\begin{figure}[tb!]
\centerline{
\includegraphics[scale=0.75]{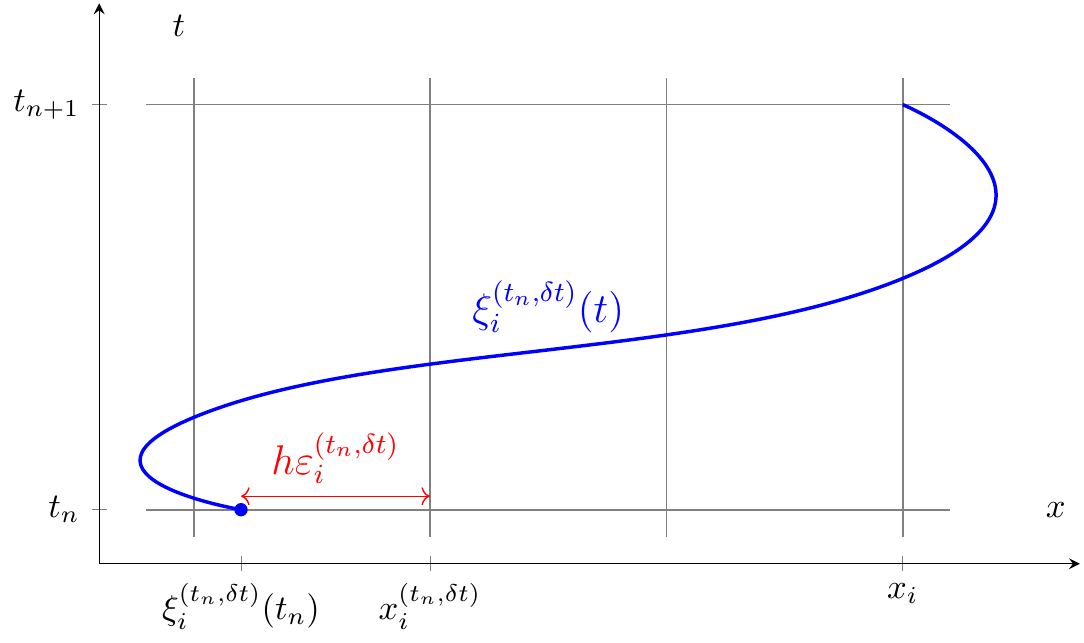}
}
\caption{
A local characteristic $\xi_{i}^{(t_n, \delta t)}(t)$ for $t \in [t_n, t_{n+1}]$ of the variable-wave-speed problem \eqref{eq:ad}.
By definition, the characteristic passes through the arrival point $(x,t)=(x_i, t_{n+1})$. 
The departure point (or foot) of the characteristic is its location at time $t = t_n$.
The departure point is decomposed into the sum of its east-neighboring mesh point and its distance from this point (see \eqref{eq:depart_decompose}).
\label{fig:departure_diagram}
}
\end{figure}

Upon (approximately) locating the departure point $\xi^{(t_n, \delta t)}_i(t)$, it will not, in general, coincide with a mesh point, yet the PDE approximation $\bm{u}_n$ is only available at mesh points.
To resolve this, an interpolating polynomial of at most degree $p \in \mathbb{N}$ is fit through the entries of $\bm{u}_n$ at the $p+1$ mesh nodes nearest to the departure point. 
Specifically, the departure point is decomposed as
\begin{align} \label{eq:depart_decompose}
\xi_i^{(t_n, \delta t)}(t_n)
\equiv
x_i^{(t_n, \delta t)} - h \varepsilon_i^{(t_n, \delta t)}, 
\quad  
\varepsilon_i^{(t_n, \delta t)} \in [0, 1),
\end{align}
in which $x_i^{(t_n, \delta t)}$ is the mesh node immediately east of the departure point $\xi_i^{(t_n, \delta t)}(t_n)$, and $\varepsilon_i^{(t_n, \delta t)}$ is its (mesh-normalized) distance from this point; see \Cref{fig:departure_diagram} for a schematic example.
The $p+1$ interpolation nodes are thus $\big\{ x_i^{(t_n, \delta t)} + h j \big\}_{j = -\ell(p)}^{r(p)}$, with $\ell(p)$ and $r(p)$---the west and east extents of the stencil, respectively---chosen so that the set of interpolation nodes represent the $p+1$ nearest neighbors of the departure point.\footnote{For interpolating polynomials of degree $p \geq 1$, both the west and east neighboring mesh points appear in the interpolation stencil of the departure point. Therefore, the choice made in \eqref{eq:depart_decompose} to write the departure point in terms of its east neighbor $x_i^{(t_n,\delta t)}$ is arbitrary in the sense that it could also have been written in terms of its west neighbor.} 
When $p$ is odd, this results in $\ell(p) = \tfrac{p+1}{2}$ and $r(p) = \tfrac{p-1}{2} = \ell(p) - 1$.
When $p$ is even, the stencil has a one-point bias, such that $\ell(p)$ and $r(p)$ depend on whether $\varepsilon^{(t_n,\delta t)}_i$ is larger than one half (we ignore this dependence in our notation).

Locating departure points for all arrival points $(x,t)=(\bm{x}, t_{n+1})$, then carrying out this piecewise polynomial interpolation constitutes a single time-step of the semi-Lagrangian discretization.
We denote the linear time-stepping operator corresponding to this semi-Lagrangian method as ${\Phi^{(t_n, \delta t)} = {\cal S}_{p,r}^{(t_n, \delta t)} \in \mathbb{R}^{n_x \times n_x}}$.

By tracking characteristics (with a sufficient level of accuracy), this discretization ensures the physical domain of dependence lies within the numerical domain of dependence.
Generally speaking, this is why semi-Lagrangian discretizations are typically free of a CFL constraint. 
However, ensuring that characteristics are tracked with sufficient accuracy can in certain circumstances lead to the imposition of a CFL-like constraint, although it is typically looser than that imposed by Eulerian schemes \cite{Huang_etal_2016,Smolarkiewicz_Pudykiewicz_1992}.
For sufficiently smooth solutions of \eqref{eq:ad}, ${\cal S}_{p,r}^{(t_n, \delta t)}$ has a global convergence rate of the form ${{\cal O} \big( \delta t^r + \frac{h^{p+1}}{\delta t} \big)}$, with the first term arising from approximately locating departure points, and the second from the polynomial interpolation at them \cite[Sec. 6.1.2]{Falcone_Ferretti_2014}.
Thus, while stability can be maintained with large time steps, they likely lead to a reduction in accuracy of the method. 
Nonetheless, the time-step size can be chosen based on accuracy requirements rather than stability requirements, unlike CFL-constrained Eulerian schemes for which stability is often the key factor in determining the time-step size. 
For example, this is why semi-Lagrangian methods see frequent use in numerical weather prediction \cite{Mengaldo_etal2018,Staniforth_Cote1991,Williamson2007}.
In any event, we attempt to balance temporal and spatial errors by using $\delta t \approx h$ and $r = p$, which is common in the literature when developing semi-Lagrangian discretizations \cite{Qiu_Shu_2011,Huang_etal_2016,Cai_etal_2017,Cai_etal_2021}.
%

\subsection{Algorithmic description of MGRIT}
\label{sec:MGRIT}

We now provide a brief algorithmic description of MGRIT as it applies to linear problems.
Consider the following fully discrete, one-step problem
\begin{align} \label{eq:one-step}
\bm{u}_{n+1} = \Phi^{(t_n, \delta t)} \bm{u}_n + \bm{g}_{n+1}, \quad n = 0, 1, \ldots, n_t - 1,
\end{align}
in which $\bm{u}_n$ denotes the approximate spatial solution of some time-dependent PDE at time $t = t_n$, and the initial condition $\bm{u}_0$ is given.
The vector $\bm{g}_n$ contains solution-independent information, and $\Phi^{(t_n, \delta t)}$ is the (linear) time-stepping operator.
Of course, \eqref{eq:one-step} may be solved straightforwardly with sequential time-stepping---solving for $\bm{u}_1$ given $\bm{u}_0$, then solving for $\bm{u}_2$ given $\bm{u}_1$, and so on, but this process is inherently sequential.
In contrast, MGRIT solves \eqref{eq:one-step} for all unknowns $\bm{u}_n, n = 1, \ldots n_t$, at once in parallel using multigrid reduction techniques, as we now detail.

Suppose that the underlying time grid in \eqref{eq:one-step} is equispaced with $t_{n+1} = t_{n} + \delta t$. 
Then, let a coarsening factor $m \in \mathbb{N}$ induce a coarse grid defined by taking every $m$th time point from the fine grid.
Define F-points as those appearing exclusively on the fine grid, and all other points as C-points.
Given an approximate solution of \eqref{eq:one-step}, define a C-relaxation of \eqref{eq:one-step} as updating C-point values to have zero residuals---this is achieved by time-stepping to each C-point from the F-point immediately before it.
Further, define an F-relaxation of \eqref{eq:one-step} as updating F-point values to have zero residuals---this is achieved by sequentially time-stepping from each C-point across the $m-1$ F-points immediately after it.

Given an approximate solution of \eqref{eq:one-step}, a two-level MGRIT iteration proceeds by pre-relaxation, coarse-grid correction, and post-relaxation.
The typical pre-relaxation scheme is FCF: An F-relaxation followed by a C-relaxation followed by a further F-relaxation. However, a single F-relaxation can also be used, as is often the case in the Parareal literature \cite{Gander_etal_2018}.
In this work, we exclusively use FCF-relaxation, since we find it often results in more robust convergence for the advection problems we consider---in some cases, it can even mean the difference between a quickly converging solver and a divergent solver.
Post-relaxation is simply an F-relaxation.
Note that the pre- and post-relaxation sweeps are highly parallelizable.
The coarse-grid problem is derived from the C-point Schur complement of the residual equation of \eqref{eq:one-step}.
More specifically, one computes an approximate error at C-points by solving the following problem containing fewer time points:
\begin{align} \label{eq:one-step-coarse}
\bm{e}_{m (n+1)} = \Phi^{(t_{m n}, m \delta t)} \bm{e}_{m n} + \bm{r}_{m(n+1)}, \quad n = 0, 1, \ldots, (n_t-1)/m,
\end{align}
in which $\bm{r}_{m(n+1)}$ is the algebraic residual of \eqref{eq:one-step} at the C-point $t = t_{m(n+1)}$.
Here $\Phi^{(t_{m n}, m \delta t)}$ is the \textit{coarse-grid time-stepping operator}, and it should approximate the \textit{ideal} coarse-grid time-stepping operator defined by stepping across the coarse time interval $t \in [t_{mn}, t_{mn} + m \delta t]$ using the fine-grid operators: $\Phi^{(t_{m n}, m \delta t)} \approx \prod \limits_{k = 0}^{m-1} \Phi^{(t_{m n + k}, \delta t)} =: \Phi^{(t_{m n}, m \delta t)}_{\rm ideal}$.
Upon solving \eqref{eq:one-step-coarse}, the approximate coarse-grid error is interpolated to the fine grid via injection---i.e., added to existing C-point values.
Equation \eqref{eq:one-step-coarse} may be solved by sequential time-stepping, resulting in a two-level method. Alternatively, since \eqref{eq:one-step-coarse} has the same structure as \eqref{eq:one-step}, its solution may be approximated in parallel by applying the MGRIT algorithm recursively, resulting in a multilevel method.

In the event that $\Phi^{(t_{m n}, m \delta t)} = \Phi^{(t_{m n}, m \delta t)}_{\rm ideal}$, \eqref{eq:one-step-coarse} is exactly the C-point Schur complement of the residual equation associated with \eqref{eq:one-step} and MGRIT converges in a single iteration. 
However, stepping across the coarse time interval $t \in [t_{mn}, t_{mn} + m \delta t]$ with $\Phi^{(t_{m n}, m \delta t)}_{\rm ideal}$ is just as expensive as stepping across this interval on the fine grid, and thus no parallel speed-up can be achieved.
Therefore, stepping with $\Phi^{(t_{m n}, m \delta t)}$ should be less expensive than the ideal coarse-grid operator. 
Crucially, however, fast convergence of the method generally requires that the coarse-grid operator accurately approximates (in a certain sense) its ideal counterpart \cite{Dobrev_etal_2017, Southworth_2019, DeSterck_etal_2021}. 

Our numerical tests will use the open-source MGRIT implementation provided by XBraid \cite{xbraid}.
In our tests, the initial MGRIT iterate will be taken as a vector with entries uniformly random between zero and one.
The metric that we use to report convergence is the number of MGRIT iterations required to reduce the space-time residual by at least 10 orders of magnitude in the two-norm from its initial value.

\subsection{Motivating examples}
\label{sec:motivating}

As mentioned previously, a rediscretized coarse-grid operator often yields fast convergence for diffusion-dominated problems, but tends to be a poor choice for advection-dominated problems \cite{KrzysikThesis2021, DeSterck_etal_2021}.
We now demonstrate that a rediscretized coarse-grid semi-Lagrangian operator is indeed also a poor choice for our model problem.
To do so, we consider PDE \eqref{eq:ad} with constant wave-speed $\alpha = 1$, and examine the convergence factor of two-level MGRIT.
Specifically, for fixed coarsening factor $m$, we consider the function
\begin{align}
\label{eq:rho_MGRIT_asym}
\rho(c) = \max \limits_{\omega \in [-\pi, \pi)} |\lambda(\omega; c)|^{m} \frac{|\lambda^m(\omega; c) - \mu(\omega; c)|}{1 - |\mu(\omega; c)|},
\end{align}
in which $c = \delta t / h$ is the (fine-grid) CFL number.
For fixed $c$, the quantities $\lambda(\omega; c)$ and $\mu(\omega; c)$ in \eqref{eq:rho_MGRIT_asym} are the Fourier symbols of the fine- and coarse-grid time stepping operators, respectively, as functions of spatial Fourier frequency $\omega$.
For fixed $c$, \eqref{eq:rho_MGRIT_asym} is the Fourier analysis estimate of the asymptotic convergence factor of two-level MGRIT as $n_t \to \infty$ \cite[Chap. 3]{KrzysikThesis2021}.
Provided the number of iterations is not so large that the initial condition has been sequentially propagated across much of the time-domain via the fine-grid relaxation scheme---the practically relevant case for MGRIT---, \eqref{eq:rho_MGRIT_asym} provides a useful estimate of the MGRIT convergence factor for finite $n_t$.

For semi-Lagrangian orders $p = 1,3$, \eqref{eq:rho_MGRIT_asym} is plotted in \Cref{fig:conv_factor_rediscretization} for $c \in [0.5, 1]$.\footnote{Specifically for the constant-wave-speed problem, by symmetry of the fine- and coarse-grid operators used here when $p$ is odd, the corresponding plots for CFL numbers $c \in [0, 0.5]$ are a reflection of those for $c \in [0.5, 1]$ in \Cref{fig:conv_factor_rediscretization} about $c = 0.5$. Similarly, the corresponding plots for $c \in [k,k+1]$ with $k \in \mathbb{N}$ are identical to those for $c \in [0, 1]$.}
For a given $c$, to numerically evaluate the maximum in \eqref{eq:rho_MGRIT_asym} over $\omega$ we use 512 equispaced points in $[-\pi, \pi)$.
In \Cref{fig:conv_factor_rediscretization} for each $m$, there is a significant interval of CFL numbers for which $\rho > 1$, indicating that the residual norm grows from one iteration to the next (that is, at least in iterations for which \eqref{eq:rho_MGRIT_asym} is a valid estimate).
Moreover, the size of interval for which $\rho > 1$ appears to grow with $m$.
We conclude that simply rediscretizing the fine-grid semi-Lagrangian scheme on the coarse grid does not result in a robust MGRIT solver for our model problem. 
We do not provide analogous plots for even polynomial degrees $p = 2,4$ because the convergence factor \eqref{eq:rho_MGRIT_asym} is larger than one for almost all $c$ in these cases. 
%

\begin{figure}[t!]
\centerline{
\includegraphics[scale=0.36]{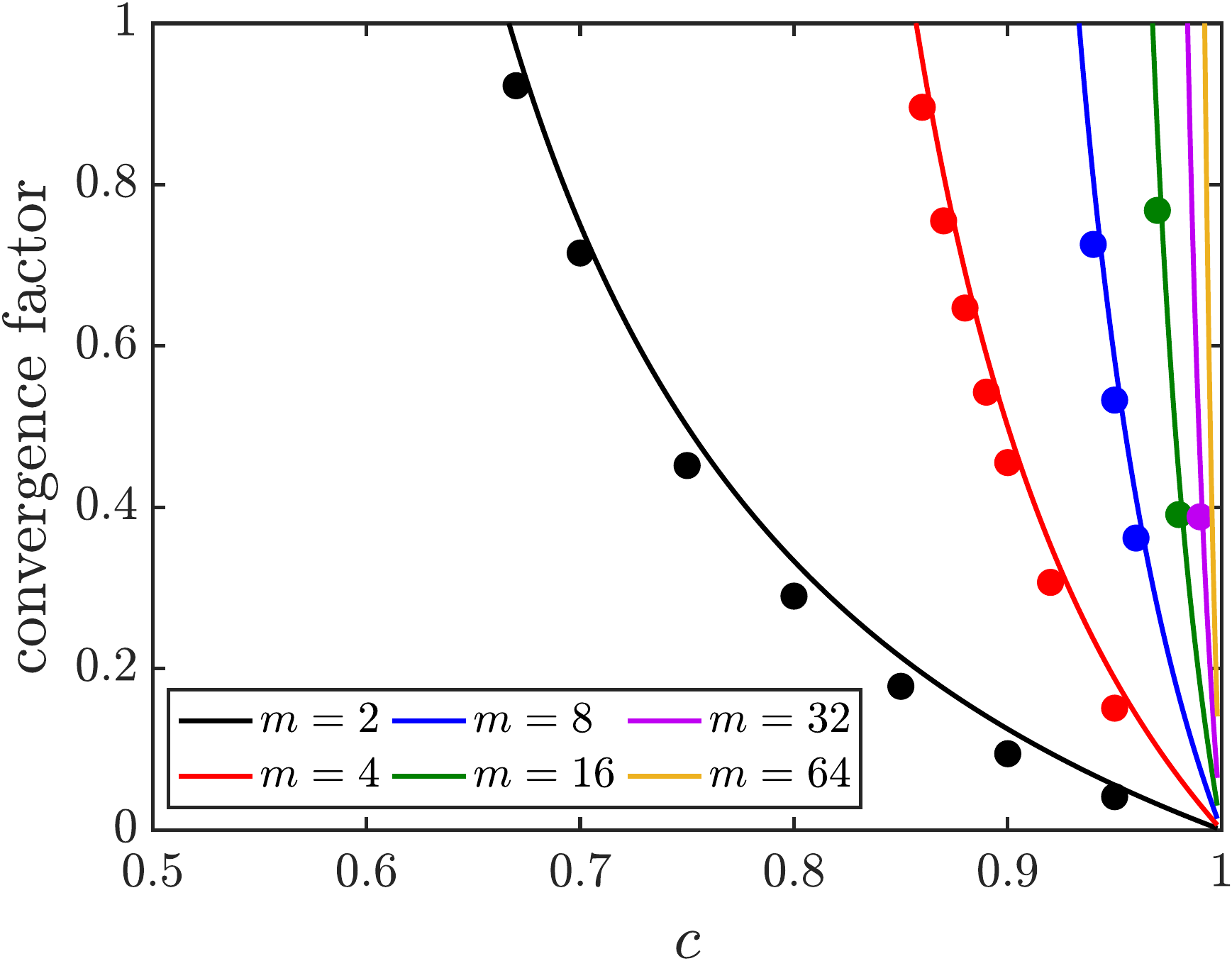}
\quad
\includegraphics[scale=0.36]{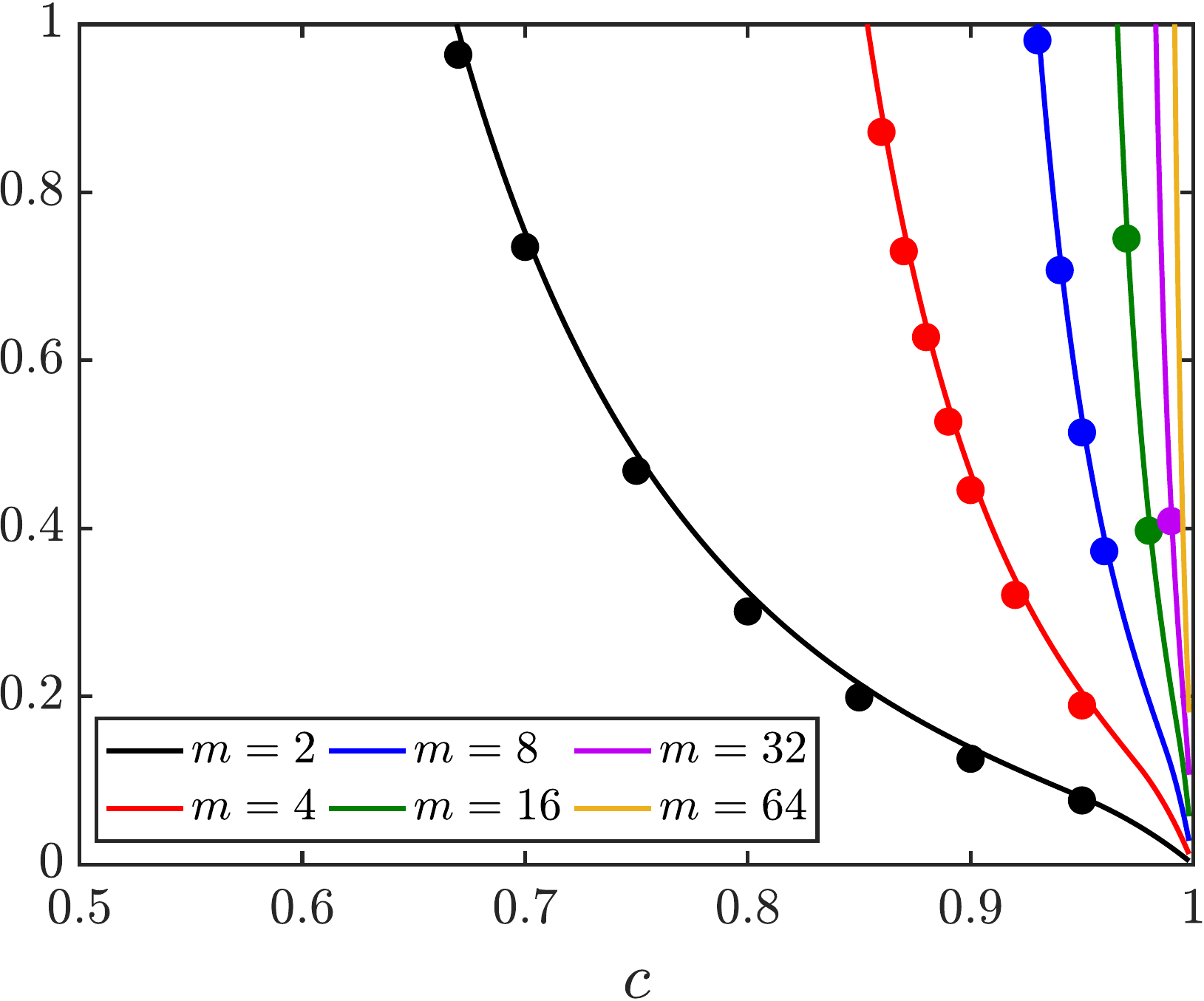}
}
\caption{Convergence factor for two-level MGRIT to solve semi-Lagrangian discretizations of the constant-wave-speed advection problem \eqref{eq:ad} using a rediscretized semi-Lagrangian coarse-grid operator. Left: semi-Lagrangian discretization order $p = 1$; right: $p = 3$.
For a fixed coarsening factor $m$, solid lines show the Fourier analysis estimate of the asymptotic convergence factor \eqref{eq:rho_MGRIT_asym}.
Solid markers show convergence factors measured on the final iteration of numerical experiments.
\label{fig:conv_factor_rediscretization}
}
\end{figure}

To demonstrate that \eqref{eq:rho_MGRIT_asym} is an accurate estimate of the true convergence factor, overlaid on the plots in \Cref{fig:conv_factor_rediscretization} are experimentally measured convergence factors taken from the final MGRIT iteration in numerical experiments.
The tests use a space-time domain with $n_x \times n_t = 2^7 \times 2^{15}$ points.
Note also that analogous numerical experiments for variable-wave-speed problems indicate that rediscretization is a poor choice for those problems too. 
%

\section{Coarse-grid operator in one spatial dimension}
\label{sec:var}

From \Cref{fig:conv_factor_rediscretization}, it is clear that, in general, rediscretizing the semi-Lagrangian operator on the coarse grid does not yield robust MGRIT convergence.
Despite this, the idea of using a semi-Lagrangian discretization on time-coarsened grids remains appealing because the discretization is stable for all time-step sizes.
Furthermore, in \cite{DeSterck_etal_2021}, we argued both for stability, and in terms of approximating the ideal coarse-grid operator, that a coarse-grid operator for an advection-dominated problem should be semi-Lagrangian-like in nature, in the sense that its numerical domain of dependence should track the characteristic curves.
For these reasons, we seek a coarse-grid operator that is based on a semi-Lagrangian discretization, but that provides a better approximation to the ideal coarse-grid operator than the semi-Lagrangian discretization does. 
There are many possible metrics one could use to characterize the difference between a coarse-grid operator and its ideal counterpart. Here, we use the concept of local truncation error, which is defined as the amount by which the exact PDE solution fails to satisfy the discrete scheme after one time step. 
Accordingly, we begin in the following section with estimates for the truncation error of the fine-grid semi-Lagrangian discretization, and the associated ideal coarse-grid operator.

Throughout the rest of the paper, the matrix ${\cal D}_{p+1} \in \mathbb{R}^{n_x \times n_x}$ is defined such that $h^{-(p+1)}{\cal D}_{p+1}$ represents a finite-difference rule for approximating the $p+1$st derivative of periodic grid functions.
Let $\bm{v} = \big(v(x_1), \ldots, v(x_{n_x}) \big)^\top \in \mathbb{R}^{n_x}$ denote a vector of a periodic function $v(x)$ evaluated on the spatial mesh. 
Then, if the finite-difference rule is of order $s \in \mathbb{N}$ and $v$ is at least $p+1+s$ times continuously differentiable
\begin{align} \label{eq:Dp+1_def}
\left(
\frac{{\cal D}_{p+1}}{h^{p+1}} \bm{v} 
\right)_i = \left. \frac{\d^{p+1} v}{\d x^{p+1}} \right|_{x_i} + {\cal O}(h^s), 
\quad i \in \{1, \ldots, n_x \}.
\end{align}
The order $s$ is not particularly important for our purposes so we do not specify it, but note that numerical tests in later sections use finite-difference approximations with $s = 2$.
Since the mesh points are equispaced, the matrix ${\cal D}_{p+1}$ is circulant.
Finally, note that while the entries of ${\cal D}_{p+1}$ are independent of $h$, its action is not: ${\cal D}_{p+1} \bm{v} = {\cal O}(h^{p+1})$ if $\bm{v}$ is independent of $h$.

\subsection{Truncation error estimates for exact departure points}
\label{sec:truncation_estimates}

For simplicity, consider an idealized semi-Lagrangian discretization ${\cal S}_{p, r}^{(t_n, \delta t)}$ that locates departure points exactly, which we will denote by writing $r = \infty$. 
Furthermore, the following error estimates (\Cref{lem:SL_trunc}, \Cref{cor:SL_ideal_trunc}, and \Cref{lem:SL_ideal_pert}) assume that the wave-speed function may depend on time but is independent of space.
Discussion on spatially varying wave-speed functions is given after \Cref{lem:SL_ideal_pert}.
Note that \cite[p. 170]{Falcone_Ferretti_2014} considers a simpler but related type of error estimate to \Cref{lem:SL_trunc} when $\alpha$ is constant.

\begin{lemma}[Semi-Lagrangian truncation error for $r = \infty$]
\label{lem:SL_trunc}
Let ${\cal D}_{p+1}$ be as in \eqref{eq:Dp+1_def}. 
Suppose that the solution $u(x,t)$ of \eqref{eq:ad} is at least $p+1+s$ times continuously differentiable with respect to $x$, and that the wave-speed in \eqref{eq:ad} is independent of space, $\alpha(x, t) \equiv \alpha(t)$.
Define $\bm{u}(t) \in \mathbb{R}^{n_x}$ as the vector composed of the PDE solution sampled in space at the mesh points $\bm{x}$ and at time $t$, and let ${\cal S}_{p, \infty}^{(t_n, \delta t)}$ be the semi-Lagrangian discretization of \eqref{eq:ad} that locates departure points exactly at time $t_{n}$. 
Then, the local truncation error of this discretization can be expressed as
\begin{align} \label{eq:SL_trunc}
\bm{u}(t_{n+1}) - {\cal S}_{p, \infty}^{(t_{n}, \delta t)} \bm{u}(t_{n})
=
(-h)^{p+1} 
\diag \Big( f_{p+1} \big(\bm{\varepsilon}^{(t_{n}, \delta t)} \big) \Big) 
\frac{{\cal D}_{p+1}}{h^{p+1}}
\bm{u}(t_{n+1})
+ {\cal O}(h^{p+2}),
\end{align}
in which $\bm{\varepsilon}^{(t_{n}, \delta t)} := \big( \varepsilon_1^{(t_{n}, \delta t)}, \ldots, \varepsilon_{n_x}^{(t_{n}, \delta t)} \big)^\top \in \mathbb{R}^{n_x}$, and $\varepsilon_i^{(t_{n}, \delta t)}$ is defined in \eqref{eq:depart_decompose}.
The function $f_{p+1}$ in \eqref{eq:SL_trunc} is the following degree $p+1$ polynomial 
\begin{align} \label{eq:fpoly_def}
f_{p+1}(z) := \frac{1}{(p+1)!} \prod \limits_{q = - \ell(p)}^{r(p)} (q+z).
\end{align}
Note that since $\bm{u}(t_{n+1})$ is independent of $h$, the vector $\frac{{\cal D}_{p+1}}{h^{p+1}}
\bm{u}(t_{n+1})$ in \eqref{eq:SL_trunc} is independent of $h$ to leading order (see \eqref{eq:Dp+1_def}).
\end{lemma}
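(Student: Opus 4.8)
The plan is to analyze a single semi-Lagrangian step componentwise and track exactly where the polynomial interpolation fails to recover the exact solution. Since the wave-speed is independent of space, $\alpha(x,t)\equiv\alpha(t)$, all characteristics over $[t_n,t_{n+1}]$ are horizontal translates of one another, so the departure displacement $h\varepsilon_i^{(t_n,\delta t)}$ is in fact the \emph{same} for every arrival node $x_i$; write $\varepsilon\equiv\varepsilon_i^{(t_n,\delta t)}$ for that common value. Because departure points are located exactly ($r=\infty$), the exact update is $u(x_i,t_{n+1})=u\big(\xi_i^{(t_n,\delta t)}(t_n),t_n\big)=u(x_i^{(t_n,\delta t)}-h\varepsilon,\,t_n)$. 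The operator ${\cal S}_{p,\infty}^{(t_n,\delta t)}$ instead returns the value at the departure point of the degree-$p$ interpolant through the $p+1$ nearest neighbors $\{x_i^{(t_n,\delta t)}+hj\}_{j=-\ell(p)}^{r(p)}$. Hence the truncation error at node $i$ is exactly the \emph{interpolation error} of a degree-$p$ polynomial interpolant evaluated at the off-grid point $x_i^{(t_n,\delta t)}-h\varepsilon$.

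First I would invoke the classical interpolation-error formula. For a function $g$ interpolated by its degree-$p$ interpolant $P$ at nodes $\{y_j\}_{j=-\ell(p)}^{r(p)}$, one has $g(y)-P(y)=\frac{g^{(p+1)}(\zeta)}{(p+1)!}\prod_{j=-\ell(p)}^{r(p)}(y-y_j)$ for some $\zeta$ in the enclosing interval. Setting $g=u(\cdot,t_n)$, nodes $y_j=x_i^{(t_n,\delta t)}+hj$, and evaluation point $y=x_i^{(t_n,\delta t)}-h\varepsilon$, the product term becomes $\prod_{j=-\ell(p)}^{r(p)}(-h\varepsilon-hj)=(-h)^{p+1}\prod_{j=-\ell(p)}^{r(p)}(\varepsilon+j)=(-h)^{p+1}(p+1)!\,f_{p+1}(\varepsilon)$, using the definition \eqref{eq:fpoly_def}. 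The Lagrange error therefore equals $(-h)^{p+1}f_{p+1}(\varepsilon)\,u^{(p+1)}_x(\zeta_i,t_n)$. Collecting all nodes into vectors yields the diagonal scaling $\diag(f_{p+1}(\bm{\varepsilon}^{(t_n,\delta t)}))$ applied to a vector whose $i$th entry is $u^{(p+1)}_x$ evaluated near $x_i$.

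Next I would convert the exact $(p+1)$st spatial derivative into the finite-difference operator. By \eqref{eq:Dp+1_def}, $\big(\frac{{\cal D}_{p+1}}{h^{p+1}}\bm{u}(t)\big)_i=u^{(p+1)}_x(x_i,t)+{\cal O}(h^s)$, so replacing $u^{(p+1)}_x(\zeta_i,t_n)$ by $\big(\frac{{\cal D}_{p+1}}{h^{p+1}}\bm{u}(t_n)\big)_i$ introduces an error of order $h^s$ from the stencil plus, since $\zeta_i$ differs from $x_i$ by ${\cal O}(h)$ and the extra factor $(-h)^{p+1}$ is already present, an error of order $h^{p+2}$ from moving the derivative's evaluation point onto the grid. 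Two further ${\cal O}(h^{p+2})$ substitutions are needed: replacing $\bm{u}(t_n)$ by $\bm{u}(t_{n+1})$ (the two differ by ${\cal O}(\delta t)={\cal O}(h)$ since $\delta t\approx h$, and the difference is again multiplied by the prefactor $(-h)^{p+1}$), which lets me state the result in terms of $\bm{u}(t_{n+1})$ as written in \eqref{eq:SL_trunc}; this is valid because the diagonal and the operator ${\cal D}_{p+1}$ commute with this leading-order swap at the cost of higher-order terms. All such corrections are absorbed into the stated ${\cal O}(h^{p+2})$ remainder.

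The main obstacle I anticipate is \emph{bookkeeping the error terms rigorously} rather than any single deep step: one must confirm that each of the substitutions above (the mean-value point $\zeta_i\to x_i$, the time shift $t_n\to t_{n+1}$, and the finite-difference truncation ${\cal O}(h^s)$) genuinely contributes only at order $h^{p+2}$ or higher after multiplication by the $(-h)^{p+1}$ prefactor, and that the ${\cal O}(h^s)$ term does not degrade the estimate — this requires $s\ge 1$, which holds. A secondary subtlety is the even-$p$ case, where the one-point stencil bias makes $\ell(p),r(p)$ depend on whether $\varepsilon>\tfrac12$; but this only changes which product \eqref{eq:fpoly_def} is evaluated and does not affect the structure of the argument, so the formula holds verbatim with $f_{p+1}$ interpreted via the appropriate $\ell(p),r(p)$. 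The spatial-independence hypothesis is what guarantees a single common $\varepsilon$ per time step and keeps the diagonal genuinely a function of the scalar displacement; the generalization to spatially varying $\alpha$ is, as the text notes, deferred to the discussion after \Cref{lem:SL_ideal_pert}.
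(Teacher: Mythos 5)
Your first half---reducing the truncation error to the Lagrange interpolation error at the departure point and extracting the factor $(-h)^{p+1}f_{p+1}\big(\varepsilon_i^{(t_n,\delta t)}\big)\,\partial_x^{p+1}u(\zeta_i,t_n)$---matches the paper's proof exactly. The gap is in how you move the derivative's evaluation point from $(\zeta_i,t_n)$ to $(x_i,t_{n+1})$. You assert that ``$\zeta_i$ differs from $x_i$ by ${\cal O}(h)$,'' but $\zeta_i$ lies within ${\cal O}(h)$ of the \emph{departure} point $\xi_i^{(t_n,\delta t)}(t_n)$, which sits a distance of order $|\alpha|\,\delta t$ from the arrival point $x_i$; likewise your swap of $\bm{u}(t_n)$ for $\bm{u}(t_{n+1})$ costs ${\cal O}(\delta t)$. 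Both steps therefore leave remainders of size ${\cal O}(h^{p+1}\delta t)$, which is ${\cal O}(h^{p+2})$ only under the additional assumption $\delta t={\cal O}(h)$. That assumption is not a hypothesis of the lemma, and it fails precisely where the lemma is used: \Cref{lem:SL_ideal_pert} applies \eqref{eq:SL_trunc} with time step $m\delta t$ to the coarse-grid operator ${\cal S}_{p,\infty}^{(t_n,m\delta t)}$ (and recursively with $m^{\ell}\delta t$ in the multilevel setting), and the supplementary experiments verify the estimates even in the regime $\delta t={\cal O}(1)$.

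The missing idea is the one place where the hypothesis $\alpha(x,t)\equiv\alpha(t)$ does real work: differentiating the PDE $p+1$ times in $x$ shows that $\partial_x^{p+1}u$ satisfies the same advection equation and is therefore constant along characteristics, so $\partial_x^{p+1}u\big|_{(\xi_i^{(t_n,\delta t)}(t_n),\,t_n)}=\partial_x^{p+1}u\big|_{(x_i,\,t_{n+1})}$ \emph{exactly}, with no ${\cal O}(\delta t)$ penalty; the only Taylor expansion then required is from $\zeta_i$ to the departure point, a genuine ${\cal O}(h)$ shift. In your write-up the spatial independence of $\alpha$ is invoked only to note that $\varepsilon$ is constant in $i$, which is not where the hypothesis is essential for this lemma. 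Without the transport argument, your bookkeeping silently restricts the result to $\delta t\lesssim h$ and reintroduces exactly the ${\cal O}(h^{p+1}\delta t)$-sized defect that \Cref{rem:space_vary_estimates} attributes to the genuinely variable-coefficient case.
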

\begin{proof}
Since ${\cal S}_{p,\infty}^{(t_n, \delta t)}$ exactly locates departure points of \eqref{eq:ad}, the only truncation error resulting from applying the $i$th row of ${\cal S}_{p,\infty}^{(t_n, \delta t)}$ to $u(\bm{x},t_n)$ is the error from the polynomial interpolation at the departure point $\xi_i^{(t_n,\delta t)}(t_n) = x_i^{(t_n,\delta t)} - h \varepsilon_i^{(t_n,\delta t)}$.
Since $u$ is at least $p+1$ times continuously differentiable with respect to $x$, the standard error estimate from polynomial interpolation theory can be applied (see, e.g., \cite[Thm. 3.1.1]{Davis_1975}). 
Since the interpolation nodes are separated by distance $h$, applying this estimate at the $i$th departure point at time $t_{n}$ yields
\begin{align} 
\begin{split}
&u\big(\xi_i^{(t_{n}, \delta t)}(t_{n}), t_{n}\big) 
-  
\left( {\cal S}_{p,\infty}^{(t_{n}, \delta t)} \bm{u}(t_{n}) \right)_i 
\\
&= 
\frac{1}{(p+1)!} \prod \limits_{q = - \ell(p)}^{r(p)} 
\left[ 
\big(x_i^{(t_{n},\delta t)} - h \varepsilon_i^{(t_{n},\delta t)} \big)
- \big( x_i^{(t_{n},\delta t)} + h q\big) \right] 
\left.\frac{\partial^{p+1} u}{\partial x^{p+1}}\right|_{\big(\zeta_i^{(t_n, \delta t)}, t_{n}\big)}
\end{split}
\\
\label{eq:poly_interp_error}
&=
(-h)^{p+1} f_{p+1}\big( \varepsilon_i^{(t_{n},\delta t)} \big) \left.\frac{\partial^{p+1} u}{\partial x^{p+1}}\right|_{\big(\zeta_i^{(t_n, \delta t)}, t_n\big)},
\end{align}
for some unknown point $\zeta_i^{(t_n, \delta t)} \in {\big(x_i^{(t_{n}, \delta t)} - h \ell(p), x_i^{(t_{n}, \delta t)} + h r(p)\big)}$.
Since $\zeta_i^{(t_n, \delta t)}$ and $\xi_i^{(t_n,\delta t)}(t_n)$ are a distance of ${\cal O}(h)$ apart, write $\zeta_i^{(t_n, \delta t)} = \xi_i^{(t_n,\delta t)}(t_n) + h \widehat{\zeta}_i^{(t_n, \delta t)}$ for some other unknown quantity $\widehat{\zeta}_i^{(t_n, \delta t)}$. 
Then, by Taylor expansion, 
\begin{align}
\label{eq:Taylor_expansion}
\left.\frac{\partial^{p+1} u}{\partial x^{p+1}}\right|_{\big(\zeta_i^{(t_n, \delta t)}, t_n\big)}
=
\left.\frac{\partial^{p+1} u}{\partial x^{p+1}}\right|_{\big(\xi_i^{(t_n, \delta t)}(t_n) + h \widehat{\zeta}_i^{(t_n, \delta t)}, t_{n} \big)}
= 
\left.\frac{\partial^{p+1} u}{\partial x^{p+1}}\right|_{\big(\xi_i^{(t_n, \delta t)}(t_n), t_{n}\big)}
+ 
{\cal O}(h).
\end{align}
Next, observe that differentiating the advection problem \eqref{eq:ad} $p+1$ times with respect to $x$ yields when $\alpha(x,t) \equiv \alpha(t)$
\begin{align}
\frac{\partial }{\partial t} \frac{\partial^{p+1} u}{\partial x^{p+1}} 
+
\alpha(t)
\frac{\partial }{\partial x} \frac{\partial^{p+1} u}{\partial x^{p+1}} 
=
0.
\end{align}
Thus, $\frac{\partial^{p+1} u}{\partial x^{p+1}}$ is constant along characteristics of the advection problem \eqref{eq:ad}, from which it follows that $\left.\frac{\partial^{p+1} u}{\partial x^{p+1}}\right|_{\big(\xi_i^{(t_n, \delta t)}(t_n), t_{n}\big)} = \left.\frac{\partial^{p+1} u}{\partial x^{p+1}}\right|_{(x_i, t_{n+1})}$.
Applying this in \eqref{eq:Taylor_expansion} and substituting the result along with ${u\big(\xi_i^{(t_n,\delta t)}(t_n), t_n \big) = u(x_i, t_{n+1})}$ into  \eqref{eq:poly_interp_error} gives
\begin{align}
u(x_i, t_{n+1})
-
\left( {\cal S}_{p,\infty}^{(t_{n}, \delta t)} \bm{u}(t_{n}) \right)_i 
=
(-h)^{p+1} f_{p+1}\big( \varepsilon_i^{(t_{n},\delta t)} \big) \left.\frac{\partial^{p+1} u}{\partial x^{p+1}}\right|_{(x_i, t_{n+1})}
+ 
{\cal O}(h^{p+2}).
\end{align}
Finally, substituting into this equation $\left.\frac{\partial^{p+1} u}{\partial x^{p+1}}\right|_{(x_i, t_{n+1})} = \left( \tfrac{{\cal D}_{p+1}}{h^{p+1}} 
\bm{u}(t_{n+1}) \right)_{i} + {\cal O}(h^s)$ (see \eqref{eq:Dp+1_def}) gives the $i$th row of the claimed result \eqref{eq:SL_trunc}.
\end{proof}
\begin{corollary}[Ideal coarse-grid semi-Lagrangian truncation error for $r = \infty$]
\label{cor:SL_ideal_trunc}
Suppose the assumptions of \Cref{lem:SL_trunc} hold.
Then, the ideal coarse-grid operator defined by time-stepping across $t \in [t_n, t_n + m \delta t]$ with the $m$ fine-grid operators ${\cal S}_{p, \infty}^{(t_{n} + k \delta t, \delta t)}$, $k = 0, 1, \ldots, m-1$, has a local truncation error given by
\begin{align} \label{eq:SL_ideal_trunc} 
\begin{split}
&\bm{u}(t_{n+m})- \Bigg[ \prod_{k = 0}^{m-1} {\cal S}_{p, \infty}^{(t_n+k \delta t, \delta t)} \Bigg]
\bm{u}(t_n)  
\\
&\quad=
(-h)^{p+1}  
\diag \Bigg( \sum \limits_{k = 0}^{m-1} f_{p+1} \big(\bm{\varepsilon}^{(t_n+k \delta  t, \delta t)} \big) \Bigg)
\frac{{\cal D}_{p+1}}{h^{p+1}}
\bm{u}(t_{n+m})  
+ {\cal O}(h^{p+2}).
\end{split}
\end{align}
\end{corollary}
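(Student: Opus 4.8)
The plan is to reduce the $m$-step statement to the single-step estimate of \Cref{lem:SL_trunc} by a telescoping argument, exploiting heavily that a space-independent wave-speed renders all the operators involved circulant. First I would abbreviate ${\cal S}^{(k)} := {\cal S}_{p,\infty}^{(t_n+k\delta t,\delta t)}$ and introduce the partial products $P_j := {\cal S}^{(m-1)} \cdots {\cal S}^{(j+1)}$ (with $P_{m-1}=I$), and write the total defect as the telescoping sum
\[
\bm{u}(t_{n+m}) - \Big[\textstyle\prod_{k=0}^{m-1}{\cal S}^{(k)}\Big]\bm{u}(t_n) = \sum_{j=0}^{m-1} P_j\Big(\bm{u}(t_{n+j+1}) - {\cal S}^{(j)}\bm{u}(t_{n+j})\Big),
\]
obtained by inserting and cancelling the intermediate exact-solution samples $\big(\prod_{k=j}^{m-1}{\cal S}^{(k)}\big)\bm{u}(t_{n+j})$, $j=0,\ldots,m$. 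Each bracketed single-step defect is then replaced by the expression from \Cref{lem:SL_trunc}, namely $(-h)^{p+1}\diag\!\big(f_{p+1}(\bm{\varepsilon}^{(t_n+j\delta t,\delta t)})\big)\tfrac{{\cal D}_{p+1}}{h^{p+1}}\bm{u}(t_{n+j+1}) + {\cal O}(h^{p+2})$.

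The decisive simplification comes from the hypothesis $\alpha(x,t)\equiv\alpha(t)$: solving the departure ODE \eqref{eq:depart_ODE} exactly then produces a departure offset $\int_{t_n+j\delta t}^{t_n+(j+1)\delta t}\alpha\,\d\tau$ that is independent of the arrival index $i$, so the fractional-distance vector satisfies $\bm{\varepsilon}^{(t_n+j\delta t,\delta t)} = \varepsilon^{(j)}\bm{1}$ for a scalar $\varepsilon^{(j)}$, whence $\diag\!\big(f_{p+1}(\bm{\varepsilon}^{(t_n+j\delta t,\delta t)})\big)=f_{p+1}(\varepsilon^{(j)})\,I$. For the same reason each ${\cal S}^{(k)}$ is a uniform-grid interpolation with index-independent weights, hence circulant, so every ${\cal S}^{(k)}$, $P_j$, and ${\cal D}_{p+1}$ commute. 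This allows me to move $P_j$ past the scalar $f_{p+1}(\varepsilon^{(j)})$ and past ${\cal D}_{p+1}$, reducing the $j$-th summand to $(-h)^{p+1}f_{p+1}(\varepsilon^{(j)})\tfrac{{\cal D}_{p+1}}{h^{p+1}}P_j\bm{u}(t_{n+j+1}) + P_j\,{\cal O}(h^{p+2})$.

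Next I would shift the time level of the derivative factor from $t_{n+j+1}$ to the common level $t_{n+m}$. By consistency of the shorter ideal product $P_j$ (either by induction on the number of steps, or by telescoping \Cref{lem:SL_trunc} over its $m-1-j$ steps exactly as above), $P_j\bm{u}(t_{n+j+1}) = \bm{u}(t_{n+m}) + (-h)^{p+1}\bm{g}_j$, where $\bm{g}_j$ is a smooth ${\cal O}(1)$ grid function. Applying $\tfrac{{\cal D}_{p+1}}{h^{p+1}}$, which acts as $\partial_x^{p+1}+{\cal O}(h^s)$ on smooth data by \eqref{eq:Dp+1_def}, keeps the correction at ${\cal O}(h^{p+1})$, so after the outer factor $(-h)^{p+1}$ it is absorbed into the ${\cal O}(h^{p+2})$ remainder. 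With $P_j$ bounded uniformly in $h$ (stability of the semi-Lagrangian steps), the remainders $P_j\,{\cal O}(h^{p+2})$ stay ${\cal O}(h^{p+2})$, and summing the fixed number $m$ of terms leaves the collective remainder at ${\cal O}(h^{p+2})$. Re-assembling the scalar coefficients via $\sum_{j}f_{p+1}(\varepsilon^{(j)})\,I = \diag\!\big(\sum_{k=0}^{m-1}f_{p+1}(\bm{\varepsilon}^{(t_n+k\delta t,\delta t)})\big)$ then yields \eqref{eq:SL_ideal_trunc}.

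I expect the main obstacle to be this propagation step: justifying rigorously that applying the intermediate operators $P_j$ to each single-step truncation error perturbs it only at order ${\cal O}(h^{p+2})$, and that the derivative factor may be evaluated at $t_{n+m}$ rather than $t_{n+j+1}$. Both facts hinge on the circulant commuting structure together with the observation that the leading error is precisely $(-h)^{p+1}$ times a smooth grid function, so one further application of ${\cal D}_{p+1}/h^{p+1}$ and transport by $P_j$ do not degrade its order. The space-independence assumption is exactly what makes this structure available, which is why the corollary is stated under it.
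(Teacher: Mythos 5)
Your proposal is correct and is essentially the paper's argument in unrolled form: the telescoping sum over propagated single-step defects is exactly what the paper's step-by-step induction (apply the next operator to \eqref{eq:SL_trunc}, substitute, truncate) computes, and you invoke the same two key facts---commutativity of the semi-Lagrangian steps with $\diag(f_{p+1}(\bm{\varepsilon}))$ and ${\cal D}_{p+1}$ via the constant-$\bm{\varepsilon}$/circulant structure afforded by $\alpha(x,t)\equiv\alpha(t)$, and absorption of the ${\cal O}(h^{p+1})$ shift of the derivative factor from $t_{n+j+1}$ to $t_{n+m}$ into the ${\cal O}(h^{p+2})$ remainder. No gap; the obstacle you flag is handled the same way (and glossed to the same degree) in the paper's own proof.
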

\begin{proof}
Applying ${\cal S}_{p,\infty}^{(t_{n+1}, \delta t)}$ to both sides of \eqref{eq:SL_trunc} gives
\begin{align} 
\begin{split}
&{\cal S}_{p,\infty}^{(t_{n+1}, \delta t)} \bm{u}(t_{n+1}) 
- 
\Big[  {\cal S}_{p,\infty}^{(t_{n+1}, \delta t)} {\cal S}_{p,\infty}^{(t_n, \delta t)} \Big] \bm{u}(t_n)\\ 
&\quad =
(-h)^{p+1} 
{\cal S}_{p,\infty}^{(t_{n+1}, \delta t)} \diag \Big( f_{p+1} \big(\bm{\varepsilon}^{(t_n, \delta t)} \big) \Big) 
\frac{{\cal D}_{p+1}}{h^{p+1}}
\bm{u}(t_{n+1}) 
+ {\cal O}(h^{p+2}),
\end{split}
\\
\label{eq:SL_ideal_approx_step}
& \quad=
(-h)^{p+1} 
\diag \Big( f_{p+1} \big(\bm{\varepsilon}^{(t_n, \delta t)} \big) \Big) 
\frac{{\cal D}_{p+1}}{h^{p+1}}
\left( 
{\cal S}_{p,\infty}^{(t_{n+1}, \delta t)} 
\bm{u}(t_{n+1}) 
\right)
+ {\cal O}(h^{p+2}).
\end{align}
To arrive at \eqref{eq:SL_ideal_approx_step}, we have used the fact that when the wave-speed is independent of space ${\cal S}_{p,\infty}^{(t_{n+1}, \delta t)}$ commutes with $\diag \Big( f_{p+1} \big(\bm{\varepsilon}^{(t_n, \delta t)} \big) \Big)$ and ${\cal D}_{p+1}$.
When the wave-speed is independent of space, characteristics are parallel for all time $t$, and thus departure points of neighboring local characteristics are  equally separated by a distance $h$, from which it follows that $\bm{\varepsilon}^{(t_n, \delta t)}$ is constant.
Therefore $\diag \Big( f_{p+1} \big(\bm{\varepsilon}^{(t_n, \delta t)} \big) \Big)$ is a constant diagonal matrix and thus commutes with all other matrices.
Furthermore, since neighboring departure points are equispaced, the semi-Lagrangian matrix ${\cal S}_{p,\infty}^{(t_{n+1}, \delta t)}$ is circulant, and therefore commutes with the circulant matrix ${\cal D}_{p+1}$.

Moving on, from \eqref{eq:SL_trunc} one has
\begin{align}
\begin{split}
&{\cal S}_{p, \infty}^{(t_{n+1}, \delta t)} \bm{u}(t_{n+1})
\\ 
&\quad=
\bm{u}(t_{n+2})
-
(-h)^{p+1} 
\diag \Big( f_{p+1} \big(\bm{\varepsilon}^{(t_{n+1}, \delta t)} \big) \Big) 
\frac{{\cal D}_{p+1}}{h^{p+1}}
\bm{u}(t_{n+2}) 
+ {\cal O}(h^{p+2}).
\end{split}
\end{align}
Substituting this into \eqref{eq:SL_ideal_approx_step} and keeping only terms up to size ${\cal O}(h^{p+1})$ gives
\begin{align} \label{eq:SL_ideal_trunc2} 
\begin{split}
&u(\bm{x}, t_{n+2}) - \Big[  {\cal S}_{p, \infty}^{(t_{n+1}, \delta t)} {\cal S}_{p, \infty}^{(t_n, \delta t)} \Big]
\bm{u}(t_n) = {\cal O}(h^{p+2}) \, + 
\\
&\quad
(-h)^{p+1}  
\diag \Big( f_{p+1} \big(\bm{\varepsilon}^{(t_n, \delta t)} \big) + f_{p+1} \big(\bm{\varepsilon}^{(t_{n+1}, \delta t)} \big) \Big)
\frac{{\cal D}_{p+1}}{h^{p+1}}
\bm{u}( t_{n+2}).
\end{split}
\end{align}
Inductively repeating the above process on \eqref{eq:SL_ideal_trunc2} with the remaining $m-2$ fine-grid operators ${\cal S}_{p, \infty}^{(t_{n} + k \delta t, \delta t)}$, $k =2, \ldots, m-1$, one arrives at the result \eqref{eq:SL_ideal_trunc}. 
\end{proof}

Having developed an asymptotic expansion for the ideal coarse-grid operator, we now relate this to the coarse-grid semi-Lagrangian operator.
\begin{lemma}[Perturbed coarse-grid semi-Lagrangian operators]
\label{lem:SL_ideal_pert}
Suppose the assumptions of \Cref{lem:SL_trunc} hold.
Let ${\cal S}_{p,\infty}^{(t_n, m \delta t)}$ be the coarse-grid semi-Lagrangian discretization of \eqref{eq:ad} that locates departure points exactly.
Then, this operator can be expressed as a perturbation of the ideal coarse-grid operator $ \prod_{k = 0}^{m-1} {\cal S}_{p, \infty}^{(t_n+k \delta t, \delta t)}$ in the following three ways:
\begin{align} \label{eq:SL_ideal_pert1}
\begin{split}
&\Bigg[ \prod_{k = 0}^{m-1} {\cal S}_{p, \infty}^{(t_n+k \delta t, \delta t)} \Bigg]
\bm{u}(t_n)\\
&\quad =
{\cal S}_{p, \infty}^{(t_n, m \delta t)} 
\bm{u}(t_n) 
+ 
\diag \Big( \bm{\varphi}_{p+1}^{(t_n, m \delta t)} \Big)
{\cal D}_{p+1}
\bm{u}(t_{n+m})
+ {\cal O}(h^{p+2}), 
\end{split}
\\
\label{eq:SL_ideal_pert2}
&\quad =
\left[I
+ 
\diag \Big( \bm{\varphi}_{p+1}^{(t_n, m \delta t)} \Big)
{\cal D}_{p+1}
\right]
{\cal S}_{p, \infty}^{(t_n, m \delta t)} 
\bm{u}(t_{n})
+ {\cal O}(h^{p+2}), \\
\label{eq:SL_ideal_pert3}
&\quad=
\left[I
-
\diag \Big( \bm{\varphi}_{p+1}^{(t_n, m \delta t)} \Big)
{\cal D}_{p+1}
\right]^{-1}
{\cal S}_{p, \infty}^{(t_n, m \delta t)} 
\bm{u}(t_{n})
+ {\cal O}(h^{p+2}),
\end{align}
with the vector $\bm{\varphi}_{p+1}^{(t_n, m \delta t)} \in \mathbb{R}^{n_x}$ defined by
\begin{align}
\label{eq:varphi_p_def}
\bm{\varphi}_{p+1}^{(t_n, m \delta t)} := (-1)^{p+1}
\left(  f_{p+1} \big( \bm{\varepsilon}^{(t_n, m \delta t)} \big) - \sum \limits_{k = 0}^{m-1} f_{p+1} \big( \bm{\varepsilon}^{(t_{n} + k \delta t, \delta t)} \big) \right).
\end{align}
\end{lemma}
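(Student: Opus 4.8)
The plan is to derive all three identities from the two truncation-error expansions already in hand, \Cref{lem:SL_trunc} and \Cref{cor:SL_ideal_trunc}, by subtracting them and then rewriting the resulting leading-order term. The starting observation is that the coarse operator ${\cal S}_{p,\infty}^{(t_n, m\delta t)}$ is itself a single exact-departure-point semi-Lagrangian step, only taken over the longer interval $[t_n, t_{n+m}]$ of length $m\delta t$. Hence \Cref{lem:SL_trunc} applies verbatim with $\delta t$ replaced by $m\delta t$ and $t_{n+1}$ by $t_{n+m}$, yielding
\[
\bm{u}(t_{n+m}) - {\cal S}_{p,\infty}^{(t_n, m\delta t)}\bm{u}(t_n) = (-1)^{p+1}\diag\!\big(f_{p+1}(\bm{\varepsilon}^{(t_n, m\delta t)})\big){\cal D}_{p+1}\bm{u}(t_{n+m}) + {\cal O}(h^{p+2}),
\]
where I have absorbed $(-h)^{p+1}h^{-(p+1)} = (-1)^{p+1}$; note that the interpolation error underlying \Cref{lem:SL_trunc} depends only on the node spacing $h$, so no new step-size dependence enters the leading term beyond $\bm{\varepsilon}^{(t_n,m\delta t)}$.

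First I would subtract this from the ideal-operator expansion \eqref{eq:SL_ideal_trunc}. The two occurrences of $\bm{u}(t_{n+m})$ cancel, and the two correction terms combine into $(-1)^{p+1}\diag\big(f_{p+1}(\bm{\varepsilon}^{(t_n,m\delta t)}) - \sum_{k=0}^{m-1}f_{p+1}(\bm{\varepsilon}^{(t_n+k\delta t,\delta t)})\big){\cal D}_{p+1}\bm{u}(t_{n+m})$. Recognizing the diagonal coefficient as $\bm{\varphi}_{p+1}^{(t_n,m\delta t)}$ from \eqref{eq:varphi_p_def} gives \eqref{eq:SL_ideal_pert1} immediately.

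To reach \eqref{eq:SL_ideal_pert2} I would replace $\bm{u}(t_{n+m})$ in the correction term of \eqref{eq:SL_ideal_pert1} by its coarse semi-Lagrangian image ${\cal S}_{p,\infty}^{(t_n,m\delta t)}\bm{u}(t_n)$. The error incurred is $\diag(\bm{\varphi}_{p+1}^{(t_n,m\delta t)}){\cal D}_{p+1}$ acting on the difference $\bm{u}(t_{n+m}) - {\cal S}_{p,\infty}^{(t_n,m\delta t)}\bm{u}(t_n)$, which by the display above is itself proportional to ${\cal D}_{p+1}\bm{u}(t_{n+m})$ up to ${\cal O}(h^{p+2})$. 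Here the constant-wave-speed hypothesis is decisive: characteristics are parallel, so $\bm{\varepsilon}^{(t_n,m\delta t)}$ and $\bm{\varphi}_{p+1}^{(t_n,m\delta t)}$ are constant vectors, the diagonal matrices collapse to scalar multiples of the identity, and these commute with the circulant ${\cal D}_{p+1}$. The incurred error therefore contains the factor ${\cal D}_{p+1}^2\bm{u}(t_{n+m})$, and since one application of ${\cal D}_{p+1}$ to smooth, $h$-independent grid data is ${\cal O}(h^{p+1})$ (the remark after \eqref{eq:Dp+1_def}), two applications give ${\cal O}(h^{2(p+1)})$, hence ${\cal O}(h^{p+2})$; the substitution is thus legitimate to the claimed order. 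For \eqref{eq:SL_ideal_pert3} I would expand $(I - M)^{-1}$ with $M := \diag(\bm{\varphi}_{p+1}^{(t_n,m\delta t)}){\cal D}_{p+1}$ as a Neumann series $I + M + M^2 + \cdots$: the $M$ term reproduces \eqref{eq:SL_ideal_pert2}, while every $M^j$ with $j \ge 2$ carries an extra ${\cal D}_{p+1}^2$ acting on the smooth vector ${\cal S}_{p,\infty}^{(t_n,m\delta t)}\bm{u}(t_n) = \bm{u}(t_{n+m}) + {\cal O}(h^{p+1})$ and is therefore ${\cal O}(h^{p+2})$.

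I expect the order bookkeeping in the final two steps to be the main obstacle. The subtlety is that ${\cal D}_{p+1}$, viewed as a matrix, has ${\cal O}(1)$ norm, so the gain of an extra $h^{p+1}$ per application is a statement about its action on smooth, $h$-independent data rather than about the matrix itself; one must phrase the estimates accordingly. The constant-wave-speed assumption is what keeps this tractable, since it reduces every diagonal factor to a scalar and lets ${\cal D}_{p+1}$ commute through all products, so that repeated applications land precisely on ${\cal D}_{p+1}^2\bm{u}(t_{n+m})$-type terms whose smallness follows from \eqref{eq:Dp+1_def}; without this commuting circulant structure the Neumann tail would be considerably harder to bound.
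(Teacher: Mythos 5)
Your proposal is correct and follows essentially the same route as the paper: instantiate \Cref{lem:SL_trunc} with step $m\delta t$ to get the coarse-grid truncation error, subtract it from the ideal-operator expansion of \Cref{cor:SL_ideal_trunc} to obtain \eqref{eq:SL_ideal_pert1}, substitute $\bm{u}(t_{n+m}) = {\cal S}_{p,\infty}^{(t_n,m\delta t)}\bm{u}(t_n) + {\cal O}(h^{p+1})$ for \eqref{eq:SL_ideal_pert2}, and use the Neumann expansion for \eqref{eq:SL_ideal_pert3}. Your order bookkeeping for the ${\cal D}_{p+1}^2$ terms acting on smooth data is in fact spelled out more carefully than in the paper's own proof.
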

\begin{proof}
From the truncation error of the fine-grid semi-Lagrangian discretization ${\cal S}_{p, \infty}^{(t_n, \delta t)}$ given in \eqref{eq:SL_trunc}, it can immediately be seen that the truncation of the coarse-grid semi-Lagrangian discretization ${\cal S}_{p, \infty}^{(t_n,m \delta t)}$ is
\begin{align} \label{eq:SL_coarse_trunc}
\begin{split}
&
\bm{u}(t_{n+m}) - {\cal S}_{p, \infty}^{(t_n,m \delta t)} u(\bm{x}, t_n) 
\\ 
&\quad=
(-h)^{p+1} \diag \Big( f_{p+1} \big( \bm{\varepsilon}^{(t_n,m \delta t)} \big) \Big) 
\frac{{\cal D}_{p+1}}{h^{p+1}}
\bm{u}(t_{n+m}) 
+ {\cal O}(h^{p+2}).
\end{split}
\end{align}
Equation \eqref{eq:SL_ideal_pert1} follows by subtracting the truncation error \eqref{eq:SL_coarse_trunc} from that of the ideal coarse-grid operator's in \eqref{eq:SL_ideal_trunc}, and then rearranging the resulting equation for the ideal coarse-grid operator.
Equation \eqref{eq:SL_ideal_pert2} follows by substituting $\bm{u}(t_{n+m}) = {\cal S}_{p, \infty}^{(t_n,m \delta t)} u(\bm{x}, t_n) + {\cal O}(h^{p+1})$, as is given by \eqref{eq:SL_coarse_trunc}, into \eqref{eq:SL_ideal_pert1}.
Finally, \eqref{eq:SL_ideal_pert3} follows from the geometric expansion 
$\big[I
-
\diag \big( \bm{\varphi}_{p+1}^{(t_n, m \delta t)} \big)
{\cal D}_{p+1}
\big]^{-1} 
= 
I 
+ 
\diag \big( \bm{\varphi}_{p+1}^{(t_n, m \delta t)} \big)
{\cal D}_{p+1} 
\\
+ 
\big( \diag \big( \bm{\varphi}_{p+1}^{(t_n, m \delta t)} \big)
{\cal D}_{p+1} \big)^2 
+ 
\ldots$,
and, so, 
$\big[I
-
\diag \big( \bm{\varphi}_{p+1}^{(t_n, m \delta t)} \big)
{\cal D}_{p+1}
\big]^{-1} \bm{v} 
\\ 
= 
\big[ I + \diag \big( \bm{\varphi}_{p+1}^{(t_n, m \delta t)} \big) \big] \bm{v} 
+ 
{\cal O}(h^{2(p+1)})$
for sufficiently smooth $\bm{v}$.
\end{proof}

\begin{remark}[Estimates for spatially varying wave-speed functions]
\label{rem:space_vary_estimates}
The estimates in \Cref{lem:SL_trunc}, \Cref{cor:SL_ideal_trunc}, and \Cref{lem:SL_ideal_pert} were derived for PDE \eqref{eq:ad} with $\alpha(x,t) \equiv \alpha(t)$.
In particular, the proof of \Cref{lem:SL_trunc} does not generalize to the spatially variable case because then $\frac{\partial^{p+1} u}{\partial x^{p+1}}$ is no longer constant along characteristics of \eqref{eq:ad}.
Furthermore, the commutativity of ${\cal S}_{p,\infty}^{(t_{n+1}, \delta t)}$  with $\diag \big( f_{p+1} \big(\bm{\varepsilon}^{(t_n, \delta t)} \big) \big)$ and ${\cal D}_{p+1}$ used in the proof of \Cref{cor:SL_ideal_trunc} no longer holds.
However, in what follows below, we find numerically that the MGRIT method that is derived from the estimates in \Cref{lem:SL_ideal_pert} also works effectively for the case that $\alpha$ depends on $x$ and $p$ is odd.
For odd $p$, we conjecture based on numerical evidence reported in Supplementary Materials \Cref{SM:sec:SLtrunc_num_evidence}, that the estimates for the spatially independent wave-speed case hold for the spatially variable case up to terms of size ${\cal O}(h^{p+1} \delta t)$, while for even $p$, we do not believe that all of the estimates extend analogously.
\end{remark}

\subsection{Coarse-grid operator for exact departure points}
\label{sec:exact}

The significance of \Cref{lem:SL_ideal_pert} is that it provides several asymptotic relationships between the ideal coarse-grid operator and the coarse-grid semi-Lagrangian operator. 
Specifically, considering \eqref{eq:SL_ideal_pert1}, when the wave-speed function does not vary in space, the coarse-grid semi-Lagrangian operator serves as an ${\cal O}(h^{p+1})$ approximation to the ideal coarse-grid operator, recalling that ${\cal D}_{p+1} \bm{u}(t_{n+m}) = {\cal O}(h^{p+1})$ (see \eqref{eq:Dp+1_def}). 
However, the operators appearing in \eqref{eq:SL_ideal_pert2} and \eqref{eq:SL_ideal_pert3} serve as ${\cal O}(h^{p+2})$ approximations to the ideal coarse-grid operator when the wave-speed does not vary in space.

Based on \eqref{eq:SL_ideal_pert2}, we propose the following \textit{explicit} coarse-grid operator
\begin{align} \label{eq:Fp_def}
\Phi^{(t_n,m \delta t)} 
=
{\cal F}_{p+1}^{(t_n, m \delta t)} {\cal S}_{p,\infty}^{(t_n, m \delta t)},
\quad 
{\cal F}_{p+1}^{(t_n, m \delta t)} 
:=  
I + \diag \Big( \bm{\varphi}_{p+1}^{(t_n, m \delta t)} \Big) {\cal D}_{p+1}.
\end{align}
Based on \eqref{eq:SL_ideal_pert3} we propose the following \textit{implicit-explicit} coarse-grid operator
\begin{align} \label{eq:Bp_def}
\Phi^{(t_n,m \delta t)} 
=
{\cal B}_{p+1}^{(t_n, m \delta t)} {\cal S}_{p,\infty}^{(t_n, m \delta t)},
\quad 
{\cal B}_{p+1}^{(t_n, m \delta t)} 
:=  
\Big[I - \diag \Big( \bm{\varphi}_{p+1}^{(t_n, m \delta t)} \Big) {\cal D}_{p+1}\Big]^{-1}.
\end{align}
Based on \Cref{rem:space_vary_estimates}, we explore using these coarse-grid operators also for problems in which the wave-speed varies in space, even though this introduces further error terms in \eqref{eq:SL_ideal_pert2} and \eqref{eq:SL_ideal_pert3} that we conjecture are of size ${\cal O}(h^{p+1} \delta t)$ (at least when $p$ is odd).
The ${\cal F}_{p+1}^{(t_n, m \delta t)}$ and ${\cal B}_{p+1}^{(t_n, m \delta t)}$ notation is used to represent forward and backward Euler steps, respectively. 
Herein, we will typically refer to \eqref{eq:Fp_def} and \eqref{eq:Bp_def} as the `forward Euler' and `backward Euler' coarse-grid operators, respectively.
In our tests, ${\cal D}_{p+1}$ will be taken as a periodic, finite-difference approximation with second-order accuracy. 

The motivation for this nomenclature is that the coarse-grid operators can be interpreted as particular coarse-grid discretizations of a certain PDE.
For simplicity, suppose that the wave-speed is constant $\alpha(x, t) \equiv \alpha$, then the vector $\bm{\varphi}_{p+1}^{(t_n, m \delta t)} \in \mathbb{R}^{n_x}$ given by  \eqref{eq:varphi_p_def} will be constant and independent of $t_n$, so, let us denote its entries by $\varphi_{p+1}^{(m \delta t)} \in \mathbb{R}$. In this case, the PDE that \eqref{eq:Fp_def} and \eqref{eq:Bp_def} discretize on the coarse grid is
\begin{align} \label{eq:ad_cg_augmented}
\frac{\partial u}{\partial t} 
+ 
\alpha \frac{\partial u}{\partial x} 
= 
\varphi_{p+1}^{(m \delta t)} 
\frac{h^{p+1}}{m \delta t} 
\frac{\partial u^{p+1}}{\partial x^{p+1}}.
\end{align}
More specifically, they use a mixed discretization of \eqref{eq:ad_cg_augmented}, in which the coarse-grid semi-Lagrangian method ${\cal S}_{p,\infty}^{(m \delta t)}$ deals with the advection term, and then the method of lines is applied to solve the rest of the equation. In doing so, the right-hand side of \eqref{eq:ad_cg_augmented} is discretized in space using the matrix $ {\cal D}_{p+1}$, and the time derivative on the left-hand side is discretized using forward and backward Euler steps in \eqref{eq:Fp_def} and \eqref{eq:Bp_def}, respectively.
See \cite[Sec. 4.2.3]{KrzysikThesis2021} for further details.

An insightful numerical test case for the proposed coarse-grid operators \eqref{eq:Fp_def} and \eqref{eq:Bp_def} is when $\alpha$ is constant, since then departure points can be located exactly---an assumption made in deriving the truncation error estimates.
We now provide some general commentary on the results of our numerical tests for constant $\alpha$ to help motivate the direction of the remainder of this paper.

\underline{Case 1: Odd $p$, forward Euler operator \eqref{eq:Fp_def}.} For sufficiently small $m$ (e.g., $m = 2$ or $m = 4$), we often obtain a quickly converging MGRIT solver, while for larger $m$ the solver often diverges. 
In fact, we are able to rigorously prove for the case of constant $\alpha$ that the operator ${\cal F}_{p+1}^{(t_n, m \delta t)}$ in \eqref{eq:Fp_def} is unstable, in the sense that $\big\Vert {\cal F}_{p+1}^{(t_n, m \delta t)}  \big\Vert_2 > 1$, for sufficiently large $m$ (details on this can be found in \cite[Sec. 4.2.4]{KrzysikThesis2021}).
Note that stability of the coarse-grid operator is a necessary but not sufficient condition for MGRIT convergence.
It is likely that the instability of this operator is correlated with the poor performance we observe in our numerical tests for larger values of $m$. 
Since this instability arises even for moderate values of $m$, we do not believe this operator can be useful in practice.

\underline{Case 2: Odd $p$, backward Euler operator \eqref{eq:Bp_def}.} Generally speaking, we find this coarse-grid operator yields robust and fast MGRIT convergence. 
We are able to rigorously prove for the case of constant $\alpha$ that the operator ${\cal B}_{p+1}^{(t_n, m \delta t)}$ in \eqref{eq:Bp_def} is unconditionally stable, in the sense that $\big\Vert {\cal B}_{p+1}^{(t_n, m \delta t)} \big\Vert_2 \leq 1$ for all problem parameters (details can be found in \cite[Sec. 4.2.4]{KrzysikThesis2021}).

When $p$ is odd, the stability properties of ${\cal F}_{p+1}^{(t_n, m \delta t)}$ and ${\cal B}_{p+1}^{(t_n, m \delta t)}$ can be understood intuitively from the perspective that these operators resemble forward and backward Euler time discretizations, respectively, of a dissipative PDE $\frac{\partial v}{\partial t} = \kappa_{p+1}^{(m \delta t)} \frac{\partial^{p+1} v}{\partial x^{p+1}}$, with $\kappa_{p+1}^{(m \delta t)}$ a constant.
When $p = 1$ this dissipative PDE is the heat equation, for which it is well-known that forward Euler has poor stability properties, while backward Euler has excellent stability properties.

\underline{Case 3: Even $p$.} Our tests indicate that the coarse-grid operators do not yield robust MGRIT convergence. 
We do not yet have a full understanding of why this is the case, and addressing this remains ongoing work.
We speculate that this failure is at least in part due to MGRIT being able to more easily correct dissipative errors compared with dispersive errors (see the analysis of \cite{Ruprecht_2018}).
In addition, for even $p$, the nodes in the interpolation stencil change as a function of the mesh-normalized distance $\varepsilon$ to the east neighbor of the departure point (as explained in \Cref{sec:discretization}), and our proposed coarse-grid operator does not account for this.

Given the above discussion, throughout the remainder of this manuscript we focus on the backward Euler coarse-grid operator \eqref{eq:Bp_def} and consider only semi-Lagrangian discretizations using interpolating polynomials of odd degrees $p$.
As an initial demonstration of the improvement that the modified coarse-grid operator \eqref{eq:Bp_def} offers over the standard semi-Lagrangian coarse-grid operator, in \Cref{fig:conv_factor_dissipative} we recreate the convergence factor plots from \Cref{fig:conv_factor_rediscretization} for the constant-wave-speed advection problem.
%

\begin{figure}[b!]
\centerline{
\includegraphics[scale=0.36]{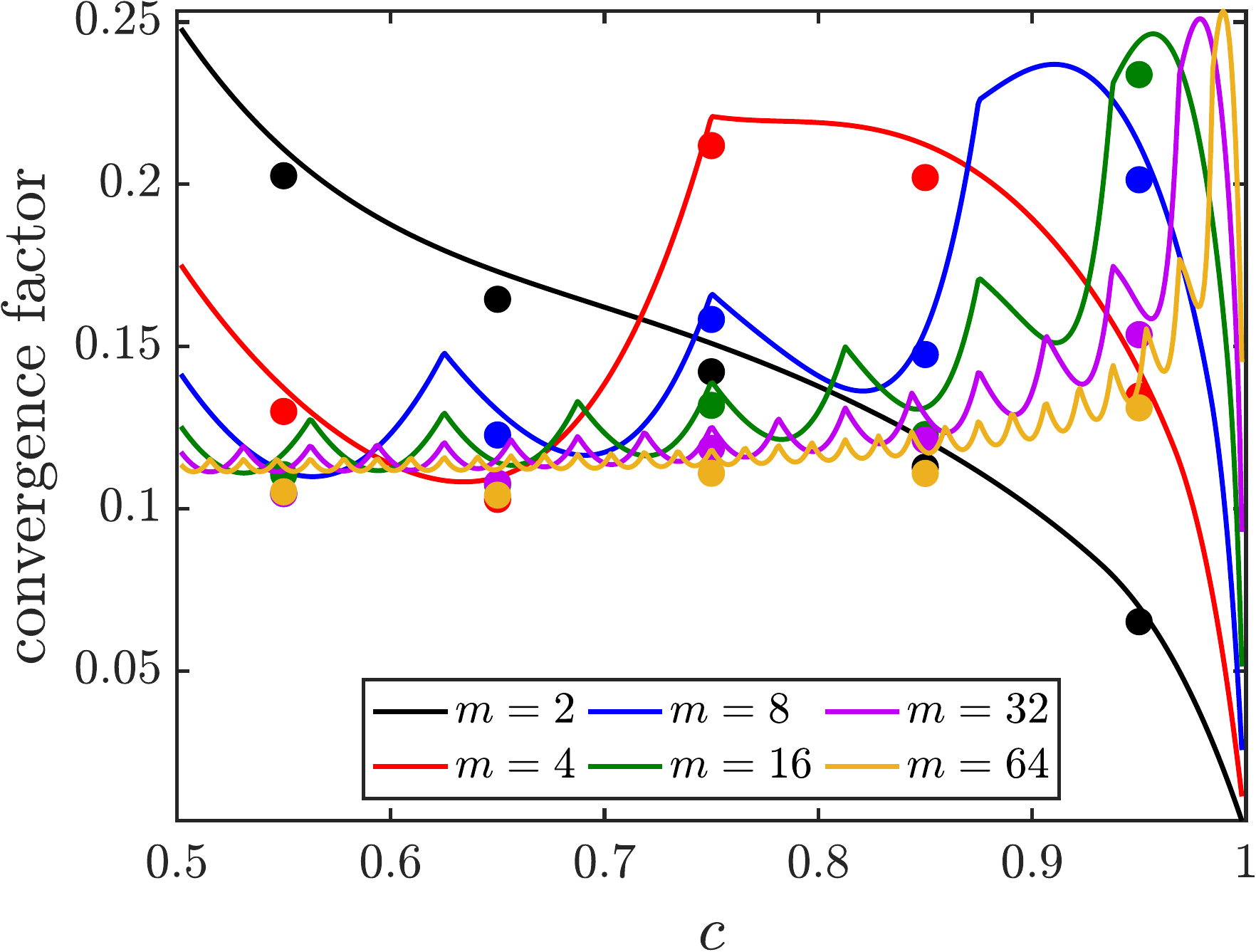}
\quad
\includegraphics[scale=0.36]{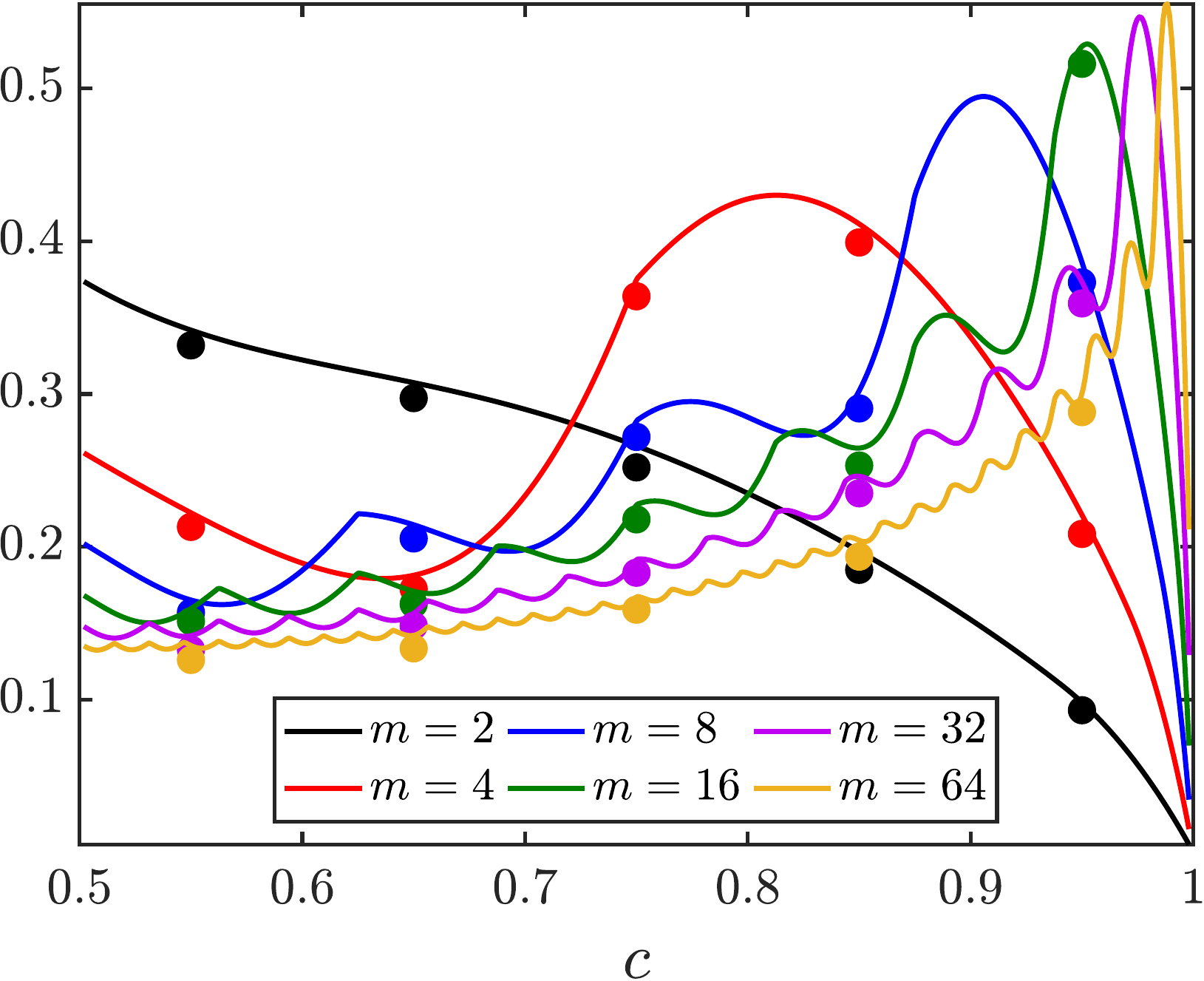}
}
\caption{Convergence factor for two-level MGRIT to solve semi-Lagrangian discretizations of the constant-wave-speed advection problem \eqref{eq:ad} with the proposed backward Euler coarse-grid operator \eqref{eq:Bp_def}. Left: semi-Lagrangian discretization order is $p = 1$; right: $p = 3$.
For a fixed coarsening factor $m$, solid lines show the Fourier analysis estimate of the asymptotic convergence factor \eqref{eq:rho_MGRIT_asym} as a function of the CFL number $c = \delta t/h$.
Solid markers show experimentally measured convergence factors.
\label{fig:conv_factor_dissipative}
}
\end{figure}

Remarkably, the plots in \Cref{fig:conv_factor_dissipative} show that the proposed coarse-grid operator leads to a convergent MGRIT solver for all CFL numbers, at least  when $\alpha$ is constant.
Moreover, convergence is fast for many CFL numbers; however, it does show a somewhat strong dependence on the CFL number and coarsening factor.
For a given $m$, the convergence rate shown in \Cref{fig:conv_factor_dissipative} tends to deteriorate going from $p = 1$ to $p = 3$, as is particularly obvious by contrasting the peaks of the $p = 1$ and $p = 3$ curves (note the two different vertical scales).
Interestingly, in our prior work \cite{DeSterck_etal_2021} on optimizing coarse-grid operators for explicit Eulerian discretizations of constant-wave-speed advection, convergence improved with increasing discretization order.
The reversal of this trend here and the non-uniformity of convergence with respect to CFL number perhaps hint that there exist \textit{better} coarse-grid operators than \eqref{eq:Bp_def}, but we leave this to future research.
In any event, for constant-wave-speed problems, the proposed backward Euler coarse-grid operator results in a robustly converging MGRIT solver for odd polynomial degrees that is fast for the majority of CFL numbers. 

A potential concern with the backward Euler operator \eqref{eq:Bp_def} is that applying it requires performing a linear solve, a task which is considerably more computationally expensive than applying an explicit semi-Lagrangian update. 
Our numerical experiments in the following section will demonstrate, however, that this linear solve may be carried out approximately with a small number of GMRES iterations.
In fact, the numerical tests used to generate the data points overlaid in \Cref{fig:conv_factor_dissipative} used only 10 GMRES iterations to approximately solve these linear systems.

\subsection{Coarse-grid operator for inexact departure points}
\label{sec:inexact}

We now move to the more practical case of developing a coarse-grid operator for when the fine- and coarse-grid semi-Lagrangian methods do not exactly locate departure points. 
Recall from \Cref{sec:discretization} that on the fine grid, the semi-Lagrangian method  estimates departure points by a single step of an ERK method.
The question is now how they should be located on the coarse grid. To answer this, we consider a heuristic strategy, since incorporating inexact departure point locations into the truncation estimates from \Cref{lem:SL_trunc} is not straightforward.

The immediately obvious way to compute coarse-grid departure points is to redeploy the ERK scheme that was used on the fine grid but with the coarse-grid time-step, i.e., rediscretize the ERK scheme. 
However, for larger values of $m$, this can be expected to produce highly inaccurate departure points, at least for variable wave-speeds.
Numerical tests for variable-wave-speed problems (not shown here for brevity) confirm that this strategy does not lead to robust MGRIT convergence.
A second option (see \cite[Sec 4.3.2]{KrzysikThesis2021} for details) is to increase the accuracy of this coarse-grid ERK integration by taking many small steps, such as $m$ steps using the fine-grid time-step $\delta t$, for example.
The obvious downside of this strategy is that it is expensive, since it uses fine-grid resolution to determine coarse-grid quantities. 

Based on the above discussion, for the fine-grid semi-Lagrangian discretization ${\cal S}_{p,r}^{(t_{n}, \delta t)}$ we propose the following backward Euler coarse-grid operator
\begin{align} \label{eq:Psi_Bp_def2}
\Phi^{(t_n,m \delta t)} 
=
{\cal B}_{p+1}^{(t_n, m \delta t)} {\cal S}_{p,r_*}^{(t_n, m \delta t)}.
\end{align}
Here $r_*$ signifies that the coarse-grid semi-Lagrangian operator ${\cal S}_{p,r_*}^{(t_n, m \delta t)}$ should locate departure points with an accuracy that is in some sense comparable to that of the fine-grid operator ${\cal S}_{p,r}^{(t_{n}, \delta t)}$, see \Cref{sec:backtracking}.
The backward Euler matrix ${\cal B}_{p+1}^{(t_n, m \delta t)}$ is still defined as it was in \eqref{eq:Bp_def}.
In \Cref{sec:backtracking}, a scalable strategy is presented for estimating coarse-grid departure points by reusing the departure point calculations from the fine grid. 
However, since this discussion is detailed, we first present numerical experiments that use this strategy for the coarse-grid operator \eqref{eq:Psi_Bp_def2}.

In the numerical tests, the fine-grid time-step is chosen as $\delta t = 0.85 h$, and the following wave-speed functions are considered
\begin{align}
\label{eq:alpha1}
\alpha(x,t) &= 1, \\
\label{eq:alpha2}
\alpha(x,t) &= \cos (2 \pi t), \\
\label{eq:alpha4}
\alpha(x,t) &= \cos (2 \pi t) \cos (2 \pi x).
\end{align}
Plots of the functions \eqref{eq:alpha2} and \eqref{eq:alpha4}, and the corresponding PDE solutions are given in Supplementary \Cref{SM:fig:wave_and_sol_1D_alpha2,SM:fig:wave_and_sol_1D_alpha4}, respectively.
The number of MGRIT iterations required to reach convergence on these problems is given in \Cref{tab:two_level_iters}.
The solution of coarse-grid linear systems involving the matrix ${\cal B}_{p+1}^{(t_n, m \delta t)}$ is approximated using 10 GMRES iterations with a zero initial guess.
Fewer than 10 GMRES iterations can be used without impacting the results, but the focus of these particular tests is to determine the MGRIT convergence rate, independent of the cost of solving the coarse-grid linear systems.
Note also for reasons relating to using GMRES inside MGRIT, we use a linear version of MGRIT, which we have implemented in XBraid (by default, XBraid uses the FAS framework); see \Cref{SM:sec:GMRES_FAS} for details.

Generally speaking, the convergence rates in \Cref{tab:two_level_iters} are fast for the two-level solution of hyperbolic problems.
A general trend among these results is the convergence rate deteriorating with increasing discretization order, consistent with \Cref{fig:conv_factor_dissipative}.
For the constant-wave-speed case, the iteration counts in \Cref{tab:two_level_iters} are essentially constant as the space-time mesh is refined. 
For the variable-wave-speed cases, there is some growth in iteration counts for the two high-order discretizations; at the same time, the iterations for these variable-wave-speed problems are typically smaller than those for the constant-wave-speed problem.
%

%
%
\renewcommand{\arraystretch}{1.1}
\begin{table}[t!]
  \centering
  \begin{tabular}{| c | c | ccc | ccc | ccc |}  
  \cline{3-11}
     \multicolumn{2}{ c|}{} & \multicolumn{3}{ c|}{$\alpha = \eqref{eq:alpha1}$} & \multicolumn{3}{ c|}{$\alpha = \eqref{eq:alpha2}$} & \multicolumn{3}{ c|}{$\alpha = \eqref{eq:alpha4}$}\\\cline{3-11}
   \multicolumn{2}{ c|}{} & \multicolumn{3}{ c|}{$m$} & \multicolumn{3}{ c|}{$m$} & \multicolumn{3}{ c|}{$m$} \\\hline
$p,r$ & $n_x \times n_t$ & 4 & 8 & 16 & 4 & 8 & 16 & 4 & 8 & 16 \\\hline
\Xhline{2\arrayrulewidth}
{\multirow{3}{*}{$1,1$}}
 & $2^{8} \times 2^{10}$    & 14  & 12 & 11        & 12 & 10 & 11    & 11 & 11 & 12 \\\cline{2-11}
 & $2^{10} \times 2^{12}$  & 14  & 12  & 11       & 12 & 11  & 11    & 12 & 12 & 12 \\\cline{2-11}
 & $2^{12} \times 2^{14}$  & 14  & 12  & 11       & 14 & 11 & 11    & 13 & 13 & 13   \\\hline
\Xhline{2\arrayrulewidth}
{\multirow{3}{*}{$3,3$}}
 & $2^{8} \times 2^{10}$    & 22 & 17 & 15      & 15 & 13 & 12     & 13 & 15 & 15  \\\cline{2-11}
 & $2^{10} \times 2^{12}$  & 22 & 17 & 15      & 16 & 14 & 13     & 16 & 16 & 16 \\\cline{2-11}
 & $2^{12} \times 2^{14}$  & 22 & 17 & 15      & 21 & 15 & 14     & 19 & 19 & 19  \\\hline
\Xhline{2\arrayrulewidth}
{\multirow{3}{*}{$5,5$}}
 & $2^{8} \times 2^{10}$  & 30 & 22 & 18      & 18 & 15 & 14      & 15 & 18 & 16   \\\cline{2-11}
 & $2^{10} \times 2^{12}$ & 31 & 23 & 20      & 18 & 16 & 15     & 18 & 19 & 20  \\\cline{2-11}
 & $2^{12} \times 2^{14}$ & 31 & 23 & 20      & 27 & 18 & 15     & 23 & 24 & 24 \\\hline
 \end{tabular}
  \caption{
	Number of two-level MGRIT iterations to reach convergence.
	The wave-speed $\alpha(x,t)$ of the advection problem \eqref{eq:ad} is indicated in the top row of the table.
	The fine-grid semi-Lagrangian discretization is ${\cal S}_{p,r}^{(t_n, \delta t)}$, and the coarse-grid operator is the dissipatively corrected semi-Lagrangian operator \eqref{eq:Psi_Bp_def2}.
	The coarse-grid semi-Lagrangian operators ${\cal S}_{p,r_*}^{(t_n, m \delta t)}$ estimate departure points using the linear interpolation and backtracking strategy from \Cref{sec:backtracking}.
  \label{tab:two_level_iters}
}
\end{table}

\subsubsection{Scalable strategy for estimating coarse-grid departure points}
\label{sec:backtracking}

We now describe the strategy for estimating coarse-grid departure points that was used to generate the results in \Cref{tab:two_level_iters}.
Recall that in applying the coarse-grid semi-Lagrangian operator ${\cal S}_{p, r_*}^{(t_n, m \delta t)}$, we need to compute the values at time $t_n$ of the local coarse-grid characteristics $\xi_i^{(t_n, m \delta t)}(t)$ that arrive at $(x,t) = (x_i, t_n + m \delta t)$.
When time-stepping across the interval $t \in [t_n, t_n + m \delta t]$ on the fine grid with ${\cal S}_{p, r}^{(t_n + k \delta t, \delta t)}$, ${k = 0, 1, \ldots, m-1}$, we map out the trajectories of the fine-grid characteristics $\xi_i^{(t_n + k \delta t, \delta t)}(t)$ over the fine-grid subintervals $t \in [t_n + k \delta t, t_n + (k+1) \delta t]$ (see the gold lines in \Cref{fig:char_interp}, with the circle markers representing the fine-grid departure points).
In computing these fine-grid characteristics using an ERK method of order $r$, we in effect map out the vector field that dictates the flow of any local coarse-grid characteristic across the coarse space-time slab $(x,t) \in \Omega \times [t_n, t_n + m \delta t]$ with fine-grid-scale accuracy.
The idea we propose now is to approximately propagate a coarse-grid characteristic through this space-time slab by recycling the fine-grid characteristic directions to guide its path in an interpolating manner, following the schematic shown in \Cref{fig:char_interp}.
%

\begin{figure}[t!]
\centerline{
\includegraphics[width=0.55\textwidth]{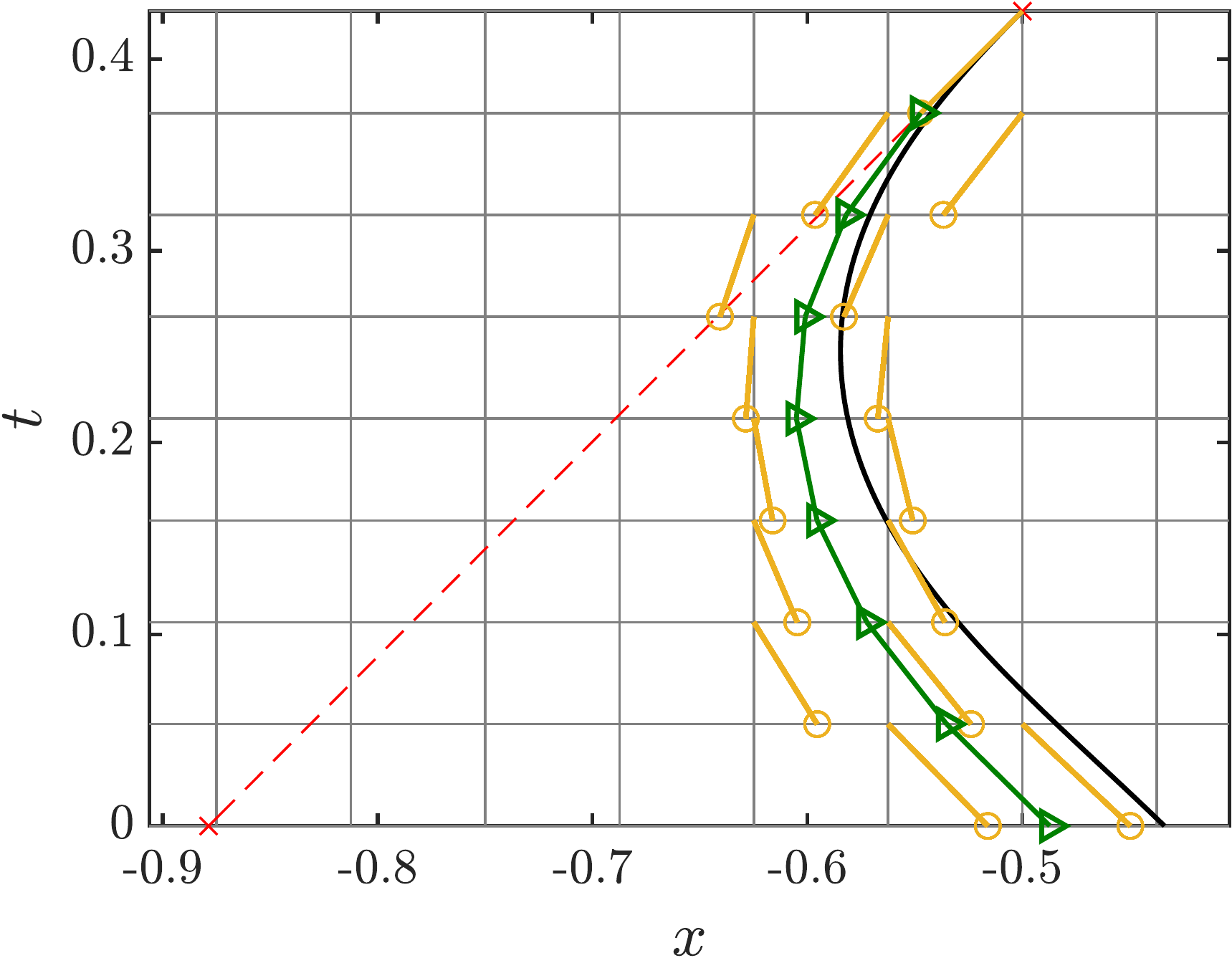}
}
\caption{Evolution of a local coarse-grid characteristic of advection problem \eqref{eq:ad} with wave-speed \eqref{eq:alpha4} using piecewise linear interpolation and backtracking of fine-grid characteristics obtained with an ERK method of accuracy $r = 1$.
Note that only a subset of the spatial domain $x \in \Omega = (-1,1)$ is shown to better highlight the detail of the characteristic.
The black curve is the exact coarse-grid characteristic.
The green curve is the coarse-grid characteristic approximated with the interpolation strategy (i.e., the triangle marker at time $t= k \delta t$ is $c_i^{(k)}$ from \eqref{eq:coarse_depart_interp}).
The gold lines are the fine-grid characteristics that are the nearest neighbors of the approximate coarse-grid characteristic (i.e., the left and right circle markers at time $t= k \delta t$ are, respectively, $f_{\textrm{W}}^{(k)}$ and $f_{\textrm{E}}^{(k)}$ from \eqref{eq:coarse_depart_interp}). 
These fine-grid characteristics were determined by a single ERK step of size $\delta t$.
The red dashed line is the coarse-grid characteristic approximated by a single ERK step of size $m \delta t = 8 \delta t$.
\label{fig:char_interp}
}
\end{figure}

For simplicity of notation, let us only consider the first coarse time interval ${t \in [0, m \delta t]}$.
Let $c^{(k)}_i$, $k \in \{0, 1, \ldots, m-1\}$, denote our approximation to the local coarse-grid characteristic $\xi_i^{(0, m \delta t)}(t)$ at time $t = k \delta t$, ${c^{(k)}_i \approx \xi_i^{(0, m \delta t)}( k \delta t)}$.
For shorthand, denote departure points of the local fine-grid characteristics on this interval by
$f^{(k)}_i \equiv \xi_i^{(k \delta t, \delta t)}(k \delta t)$, ${k \in \{0, 1, \ldots, m-1\}}$.
Over the last fine-grid subinterval $t \in [(m-1) \delta t, m \delta t]$, the approximate coarse-grid characteristic is the same as the fine-grid characteristic, since they both arrive at $(x,t) = (x_i, m \delta t)$, and therefore they intersect the $x$ axis at the same location,
\begin{align}
c^{(m-1)}_i = f^{(m-1)}_i.
\end{align}
Using this as a final-time condition, the remaining intersection points of the coarse-grid characteristic can be estimated by carrying out the following interpolating update in sequence 
\begin{align} \label{eq:coarse_depart_interp}
c^{(k)}_i
=
\frac{f^{(k)}_{\textrm{E}} - f^{(k)}_{\textrm{W}}}{h} 
\left( 
c^{(k+1)}_i
-
x_{\textrm{E}}
\right)
+
f^{(k)}_{\textrm{E}}
,
\quad
\textrm{for }
k = m-2, \ldots, 1, 0,
\end{align}
where the integers $\textrm{E}$ and $\textrm{W}$ are such that $x_{\textrm{E}}$ and $x_{\textrm{W}}$ are the east- and west-neighboring mesh points of $c^{(k+1)}_i$, respectively.
A schematic of this procedure is shown in \Cref{fig:char_interp}.
Upon completing the iteration \eqref{eq:coarse_depart_interp}, we have an  approximation for the $i$th coarse-grid departure point, ${c^{(0)}_i \approx \xi_{i}^{(0, m \delta t)}(0)}$.
The update formula \eqref{eq:coarse_depart_interp} is based on piecewise linear nearest neighbor interpolation to estimate $c^{(k)}_i$. 
The example in \Cref{fig:char_interp} shows how this strategy has the potential to approximate coarse-grid departure points much more accurately than a single $m \delta t$-sized step of the ERK scheme used for fine-grid characteristics.\footnote{In fact, if the wave-speed is spatially independent, then the strategy shown in \Cref{fig:char_interp} yields the same estimate for coarse-grid departure points as taking $m$ steps of size $\delta t$ with the fine-grid ERK scheme.}

Recall that the motivation for the proposed strategy was to estimate coarse-grid departure points in a way that is less expensive than taking $m$ ERK steps of size $\delta t$.
However, since this linear interpolation and backtracking strategy requires taking $m-1$ steps, it cannot, on the first coarse level, be significantly cheaper than using $m$ steps of an ERK scheme.\footnote{Whether it is cheaper or not depends on the number of stages of the ERK scheme and the cost of evaluating the wave-speed. Recall that an $s$-stage ERK scheme requires $s$ evaluations of the wave-speed per time-step. The linear interpolation strategy requires no evaluations of the wave-speed. In any event, the number of FLOPs for either strategy scales as ${\cal O}(m)$.}
However, supposing that the linear interpolation strategy yields sufficiently accurate departure points, in the sense that its use does not lead to a strong deterioration of MGRIT convergence, then it has the significant advantage over stepping at the fine-grid-scale with an ERK scheme that it becomes cheaper on coarser levels in a way that makes the cost scalable to multiple levels.
That is, say, for example, we have a three-level method in which we coarsen by $m$ on each level. The linear interpolation strategy takes ${\cal O}(m)$ work per coarse time-step to estimate departure points on the first coarse level, but if it is then applied recursively on the coarsest level, it requires only ${\cal O}(m)$ work there to estimate a departure point.
In general, if the strategy is applied recursively throughout a multilevel solver, it requires only ${\cal O}(m)$ work to estimate a departure point, independent of the level it occurs on.
In contrast, if an ERK method is to be used to estimate departure points on coarse levels, by our previous arguments regarding the inaccuracy of taking large time-steps, it must do so by taking many small time-steps, with size of order the fine-grid time-step $\delta t$. That is, using an ERK method to estimate a coarse-grid departure point on a coarse level $\ell \in \mathbb{N}$ requires ${\cal O}(m^{\ell})$ work. 

Finally, we remark that the linear interpolation and backtracking strategy proposed here is more expensive from a memory perspective, since estimating departure points on a coarse level requires storing all departure points on the level above it. 
%

\subsection{Multilevel setting}
\label{sec:multilevel}

In this section, we generalize the two-level, backward Euler coarse-grid operator \eqref{eq:Psi_Bp_def2} from the previous section so that it can be applied within a multilevel MGRIT algorithm. 
Let $\ell \in \mathbb{N}_0$ be the level index, and assume that the time-step size on level $\ell$ is $m^{\ell} \delta t$.
To begin, we introduce the shorthand for the following function on level $\ell \in \mathbb{N}$,
\begin{align}
\label{eq:varphi_p_multilevel_def}
\bm{\varphi}_{p+1}^{(t_n, m^{\ell} \delta t)} := (-1)^{p+1}
\left(  f_{p+1} \big( \bm{\varepsilon}^{(t_n, m^{\ell} \delta t)} \big) - \sum \limits_{k = 0}^{m-1} f_{p+1} \big( \bm{\varepsilon}^{(t_{n} + k m^{\ell-1} \delta t, m^{\ell-1} \delta t)} \big) \right),
\end{align}
which generalizes the function $\bm{\varphi}_{p+1}^{(t_n, m \delta t)}$ on level $\ell = 1$ defined in \eqref{eq:varphi_p_def}. 
Note that $\bm{\varphi}_{p+1}^{(t_n, m^{\ell} \delta t)}$ approximates the coefficient vector appearing in the leading-order term of the difference between the level $\ell$ coarse-grid semi-Lagrangian operator, and the ideal coarse-grid operator defined by stepping across the same interval $m$ times with the associated level $\ell-1$ semi-Lagrangian operators,
$
{\cal S}_{p, \infty}^{(t_n, m^{\ell} \delta t)} - \prod_{k = 0}^{m-1} {\cal S}_{p, \infty}^{(t_n + k m^{\ell-1} \delta t, m^{\ell-1} \delta t)}
$.

To develop a multilevel operator based on the backward Euler operator \eqref{eq:Psi_Bp_def2}, it is first instructive to consider a three-level algorithm. 
Given the backward Euler operators $\Phi^{(t_n + k m \delta t, m \delta t)} = {\cal B}_{p+1}^{(t_n+ k m \delta t, m \delta t)} {\cal S}_{p,\infty}^{(t_n + k m \delta t, m \delta t)}$ on level ${\ell = 1}$ for ${k \in \{0, \ldots, m-1\}}$, consider the associated ideal operator on level $\ell = 2$ and the following sequence of approximations to it,
\begingroup
\allowdisplaybreaks
\begin{align}
\Phi_{\rm ideal}^{(t_n, m^2 \delta t)} 
&=
\prod \limits_{k = 0}^{m-1} 
\Phi^{(t_n + k m \delta t, m \delta t)}
=
\prod \limits_{k = 0}^{m-1} 
\Big(
{\cal B}_{p+1}^{(t_n+ k m \delta t, m \delta t)} {\cal S}_{p,r_*}^{(t_n + k m \delta t, m \delta t)}
\Big), \\
\label{eq:BE_multi_approx1}
&\approx
\Bigg(
\prod \limits_{k = 0}^{m-1} 
{\cal B}_{p+1}^{(t_n+ k m \delta t, m \delta t)} 
\Bigg)
\Bigg(
\prod \limits_{k = 0}^{m-1} 
{\cal S}_{p,r_*}^{(t_n + k m \delta t, m \delta t)}
\Bigg), \\
\label{eq:BE_multi_approx2}
&\approx 
\Bigg(
\prod \limits_{k = 0}^{m-1} 
{\cal B}_{p+1}^{(t_n+ k m \delta t, m \delta t)}
\Bigg)
\Big(
{\cal B}_{p+1}^{(t_n, m^2 \delta t)} 
{\cal S}_{p,r_*}^{(t_n, m^2 \delta t)}\Big), \\
&=
\Bigg(
\prod \limits_{k = 0}^{m-1} 
\Bigg[
I - 
\diag
\Big( 
\bm{\varphi}_{p+1}^{(t_n + k m \delta t, m \delta t)} 
\Big)
{\cal D}_{p+1}
\Bigg]^{-1}
\Bigg)
\times
\\
\notag
&
\quad \quad \quad 
\Big[
I - 
\diag
\Big( 
\bm{\varphi}_{p+1}^{(t_n, m^2 \delta t)} 
\Big)
{\cal D}_{p+1}
\Big]^{-1} 
{\cal S}_{p,r_*}^{(t_n, m^2 \delta t)}, 
\\
\label{eq:BE_multi_approx3}
&\approx
\Bigg[
I - 
\diag
\Bigg( 
\sum \limits_{k = 0}^{m-1} \bm{\varphi}_{p+1}^{(t_n + k m \delta t, m \delta t)} 
\Bigg)
{\cal D}_{p+1}
\Bigg]^{-1}
\times
\\
\notag
&
\quad \quad \quad 
\Big[
I - 
\diag
\Big( 
\bm{\varphi}_{p+1}^{(t_n, m^2 \delta t)} {\cal D}_{p+1}
\Big)
\Big]^{-1} 
{\cal S}_{p,r_*}^{(t_n, m^2 \delta t)}, \\
\label{eq:BE_multi_approx4}
&\approx
\Bigg[
I - 
\diag
\Bigg( 
\sum \limits_{k = 0}^{m-1} \bm{\varphi}_{p+1}^{(t_n+k m \delta t, m \delta t)} 
+
\bm{\varphi}_{p+1}^{(t_n, m^2 \delta t)}
\Bigg)
{\cal D}_{p+1}
\Bigg]^{-1} 
{\cal S}_{p,r_*}^{(t_n, m^2 \delta t)}.
\end{align}
\endgroup
The approximation in \eqref{eq:BE_multi_approx1} is that the backward Euler and semi-Lagrangian operators commute. 
Only in the case of spatially independent wave-speed functions do these two operators commute (since the diagonal matrices built from the various $\bm{\varphi}_{p+1}$ vectors are constant, and ${\cal D}_{p+1}$ and the semi-Lagrangian operators are circulant). 
In any event, the forthcoming numerical results show this approximation is accurate enough to obtain fast MGRIT convergence when the wave-speed does depend on space.

In \eqref{eq:BE_multi_approx2}, $m$ successive $m \delta t$-sized semi-Lagrangian steps have been approximated using our existing two-level approximation. That is, the $m$ steps are replaced by a single $m^2 \delta t$-sized semi-Lagrangian step followed by a backward Euler step that approximately corrects for the lowest-order difference between their truncation errors. 

The approximation in \eqref{eq:BE_multi_approx3} is pulling the $m$ backward Euler steps under the inverse, and keeping only the lowest-order terms in their product, recalling ${\cal D}_{p+1} \bm{v} = {\cal O}(h^{p+1})$.
This approximation can be understood as a Taylor series interpretation of the standard rediscretization approach typically employed in MGRIT for backward Euler discretizations, in which $m$ backward Euler steps are approximated on the coarse level with a single backward Euler step using a time-step size that is $m$ times larger. 

Finally, \eqref{eq:BE_multi_approx4} arises from placing the two backward Euler matrices under a single inverse, taking their product, and then truncating the highest-order term, which is proportional to ${\cal D}_{p+1} {\cal D}_{p+1}$.
Notice that \eqref{eq:BE_multi_approx4} has the same structure as the operator \eqref{eq:Psi_Bp_def2} on level $\ell = 1$ proposed for the two-level algorithm, since it is a semi-Lagrangian step followed by a backward Euler correction.
Based on this, we propose the following time-stepping operators on level $\ell > 0$ for evolving solutions from $t_n \to t_n + m^{\ell} \delta t$,
\begin{align} \label{eq:Phi_multilevel}
\Phi^{(t_n, m^{\ell} \delta t)}
=
\left[ I - \diag \Big(\bm{\sigma}_{p+1}^{(t_n, m^{\ell} \delta t)} \Big) {\cal D}_{p+1} \right]^{-1}
\,
{\cal S}_{p, r_*}^{(t_n, m^{\ell} \delta t)},
\quad
\ell \in \mathbb{N},
\end{align}
in which the coefficient vector is defined recursively by
\begin{align} \label{eq:nu_p_def}
\bm{\sigma}_{p+1}^{(t_n, m^{\ell} \delta t)} = 
\begin{cases}
\displaystyle{\bm{\varphi}_{p+1}^{(t_n,  m^{\ell} \delta t)}}, 
\quad &\ell = 1, 
\\[1ex]
\displaystyle{\sum \limits_{k = 0}^{m-1} \bm{\sigma}_{p+1}^{(t_n+km^{\ell-1}\delta t,m^{\ell-1}\delta t)} + \bm{\varphi}_{p+1}^{(t_n,m^{\ell} \delta t)}}, 
\quad &\ell > 1.
\end{cases}
\end{align}

%
%
\renewcommand{\arraystretch}{1.1}
\begin{table}[b!]
  \centering
  \begin{tabular}{| c | c | ccc | ccc | ccc |}  
  \cline{3-11}
     \multicolumn{2}{ c|}{} & \multicolumn{3}{ c|}{$\alpha = \eqref{eq:alpha1}$} & \multicolumn{3}{ c|}{$\alpha = \eqref{eq:alpha2}$} & \multicolumn{3}{ c|}{$\alpha = \eqref{eq:alpha4}$}\\\cline{3-11}
   \multicolumn{2}{ c|}{} & \multicolumn{3}{ c|}{$m$} & \multicolumn{3}{ c|}{$m$} & \multicolumn{3}{ c|}{$m$} \\\hline
$p,r$ & $n_x \times n_t$ & 4 & 8 & 16 & 4 & 8 & 16 & 4 & 8 & 16 \\\hline
\Xhline{2\arrayrulewidth}
{\multirow{3}{*}{$1,1$}}
 & $2^{8} \times 2^{10}$  & 14 & 12 & 11       & 12 & 11 & 11     & 11 & 11 & 12  \\\cline{2-11}
 & $2^{10} \times 2^{12}$ & 14 & 12 & 11       & 12 & 11 & 11     & 12 & 12 & 12  \\\cline{2-11}
 & $2^{12} \times 2^{14}$ & 14 & 13 & 11       & 14 & 11 & 11     & 14 & 13 & 13 \\\hline
\Xhline{2\arrayrulewidth}
{\multirow{3}{*}{$3,3$}}
 & $2^{8} \times 2^{10}$   & 23 & 17 & 15       & 15 & 13 & 12     & 15 & 15 & 15  \\\cline{2-11}
 & $2^{10} \times 2^{12}$ & 23 & 17 & 15       & 16 & 14 & 13     & 16 & 16 & 16  \\\cline{2-11}
 & $2^{12} \times 2^{14}$ & 23 & 18 & 16       & 21 & 15 & 13     & 20 & 19 & 19 \\\hline
\Xhline{2\arrayrulewidth}
{\multirow{3}{*}{$5,5$}}
 & $2^{8} \times 2^{10}$   & 32 & 22 & 18        & 19 & 16 & 14     & 17 & 19 & 17   \\\cline{2-11}
 & $2^{10} \times 2^{12}$ & 34 & 24 & 20       & 19 & 16 & 15     & 19 & 19 & 20  \\\cline{2-11}
 & $2^{12} \times 2^{14}$ & 34 & 24 & 21        & 27 & 18 & 15     & 23 & 24 & 25 \\\hline
 \end{tabular}
  \caption{
	Number of MGRIT V-cycles to converge.
	The PDE is \eqref{eq:ad} with wave-speed $\alpha(x,t)$ indicated in the top row of the table.
	The fine-grid operator is ${\cal S}_{p,r}^{(t_n, \delta t)}$, and the coarse-grid operator is the dissipatively corrected operator \eqref{eq:Phi_multilevel}.
	Departure points on coarse levels are located by recursively applying the strategy from \Cref{sec:backtracking}.
	A coarsening factor of $m$ is used on all levels.
  \label{tab:multilevel_iters}
}
\end{table}

We now present results of our numerical tests using the coarse-grid operator \eqref{eq:Phi_multilevel}. 
In these tests, we use a constant coarsening factor of $m$ on all levels, and continue to coarsen until doing so would result in fewer than two points in time.
To locate departure points on coarse levels, the linear interpolation and backtracking strategy of \Cref{sec:backtracking} is employed recursively.
Furthermore, we now slightly change our strategy for approximately inverting the backward Euler matrix on coarse levels. Specifically, for each linear system, we iterate GMRES until the norm of the relative residual decreases below $10^{-2}$ or the number of iterations reaches 10.

The MGRIT V-cycle iteration counts for these tests are given in \Cref{tab:multilevel_iters}.
Notice that many of the iteration counts are almost identical to those for the two-level tests given in \Cref{tab:two_level_iters}.
We therefore conclude that the multilevel coarse-grid operator \eqref{eq:Phi_multilevel} performs as well as one could anticipate given the performance of the two-level operator that it generalizes.
We note that this is novel because it is not uncommon to see multigrid iterations strongly increase for hyperbolic problems when transitioning from two to many levels \cite{Howse_etal_2019,Yavneh_1998,Yavneh_etal_1998,Hessenthaler_etal_2018}.
These promising results also indicate that our linear interpolation and backtracking strategy of \Cref{sec:backtracking} for approximating coarse-grid departure points does so with a degree of accuracy that does not hamper multilevel MGRIT convergence, even on much coarser levels. 
%

\section{Two spatial dimensions}
\label{sec:2D}

We now extend the coarse-grid operator from the previous section to advection problems in two spatial dimensions.
\Cref{sec:2D_SL} discusses the semi-Lagrangian discretization, the coarse-grid operator is presented in \Cref{sec:2D_coarse-grid-operator}, and \Cref{sec:2D_num_results} presents numerical results.

\subsection{Semi-Lagrangian discretization}
\label{sec:2D_SL}

We now consider semi-Lagrangian discretizations of two-dimensional advection problems of the form
\begin{align} \label{eq:ad_2D}
\frac{\partial u}{\partial t} + \alpha(x,y,t) \frac{\partial u}{\partial x} + \beta(x,y,t) \frac{\partial u}{\partial y} = 0, \quad (x, y, t) \in \Omega \times (0,T],
\end{align}
with initial condition $u(x,y,0) = u_0(x,y)$, spatial domain $\Omega \subset \mathbb{R}^2$, and solution $u$ subject to periodic boundary conditions on $\partial \Omega$.
Specifically, our numerical tests for this two-dimensional problem will use the initial condition $u(x,y,0) = \sin^2 \big[ \tfrac{\pi}{2}(x-1) \big] \sin^2 \big[\tfrac{\pi}{2} (y-1) \big]$, and the spatial domain $\Omega = (-1,1)^2$.
The semi-Lagrangian discretizations we consider of \eqref{eq:ad_2D} are a straightforward generalization of those described in \Cref{sec:discretization} for the one-dimensional problem.

We define a discrete mesh on $\Omega$ as the tensor product of one-dimensional meshes in the $x$- and $y$-directions, respectively, both of which we assume are composed of $n_x$ points equispaced by a distance of $h$.
Let $(x, y, t) = (\xi(t), \eta(t), t)$ denote a characteristic of \eqref{eq:ad_2D}, then the Lagrangian formulation of \eqref{eq:ad_2D} reads 
\begin{align} 
\label{eq:Lag_2d}
\frac{\d }{\d t} \xi(t) = \alpha( \xi(t), \eta(t), t ), 
\quad
\frac{\d }{\d t} \eta(t) = \beta( \xi(t), \eta(t), t ),  
\quad
\frac{\d }{\d t} u(\xi(t), \eta(t), t) = 0.
\end{align}
Define $\big(\xi^{(t_n,\delta t)}_{ij}(t), \eta^{(t_n,\delta t)}_{ij}(t) \big)$ as the local characteristic that passes through the \textit{arrival point} ${(x,y,t) = \big(\xi^{(t_{n},\delta t)}_{ij}(t_{n+1}), \eta^{(t_{n},\delta t)}_{ij}(t_{n+1}), t_{n+1} \big)}$. 
Then, the associated  \textit{departure point} ${(x,y,t) = \big(\xi^{(t_n,\delta t)}_{ij}(t_n), \eta^{(t_n,\delta t)}_{ij}(t_n), t_n \big)}$ is given by the solution at time $t = t_n$ of the following final-value problem that holds over $t \in [t_n, t_{n+1})$
\begin{align} \label{eq:SL2D_char1}
\frac{\d }{\d t} \xi^{(t_n,\delta t)}_{ij}(t)
= 
\alpha \big( 
\xi^{(t_n,\delta t)}_{ij}(t), \eta^{(t_n,\delta t)}_{ij}(t), t 
\big), 
\quad \xi^{(t_n,\delta t)}_{ij}(t_{n+1}) = x_i, \\
\label{eq:SL2D_char2}
\frac{\d }{\d t} \eta^{(t_n,\delta t)}_{ij}(t)
= 
\beta \big( 
\xi^{(t_n,\delta t)}_{ij}(t), \eta^{(t_n,\delta t)}_{ij}(t), t 
\big), 
\quad \eta^{(t_n,\delta t)}_{ij}(t_{n+1}) = y_j.
\end{align}
Upon (approximately) locating the departure point of the local characteristic, the solution is estimated at it via two-dimensional polynomial interpolation through its nearest neighboring mesh points.
Generalizing what we did in the one-dimensional case, let $(x,y) = \big(x^{(t_n, \delta t)}_{ij}, y^{(t_n, \delta t)}_{ij} \big)$ be the mesh point immediately to the north-east of the departure point $(x,y) = \big(\xi^{(t_n,\delta t)}_{ij}(t_n), \eta^{(t_n,\delta t)}_{ij}(t_n)\big)$.
Then, decompose the $x$-coordinate of the departure point as $\xi^{(t_n,\delta t)}_{ij}(t_n) \equiv x^{(t_n, \delta t)}_{ij} - h \varepsilon^{(t_n, \delta t)}_{ij}, \, \varepsilon^{(t_n, \delta t)}_{ij} \in [0, 1)$, and the $y$-coordinate as $\eta^{(t_n,\delta t)}_{ij}(t_n) \equiv y^{(t_n, \delta t)}_{ij} - h \nu^{(t_n, \delta t)}_{ij}, \, \nu^{(t_n, \delta t)}_{ij} \in [0, 1)$.
The two-dimensional interpolating polynomial is then constructed through a tensor product of a one-dimensional interpolation in the $x$-direction and a one-dimensional interpolation in the $y$-direction. See \cite[pp. 61--62]{Falcone_Ferretti_2014} for further details.
%

\subsection{Coarse-grid operator}
\label{sec:2D_coarse-grid-operator}

We now generalize the one-dimensional considerations of \Cref{sec:var} to develop a coarse-grid operator for the two-dimensional problem \eqref{eq:ad_2D}.
In the following, the matrix ${\cal D}_{p+1} \in \mathbb{R}^{n_x \times n_x}$ is defined as in \eqref{eq:Dp+1_def}. 
Supposing spatial degrees-of-freedom are ordered row-wise lexicographically, applying $h^{-(p+1)}\left( I_{n_x} \otimes {\cal D}_{p+1} \right)$ or $h^{-(p+1)} \left( {\cal D}_{p+1} \otimes I_{n_x} \right)$ to a periodic grid vector gives an approximation to its $p+1$st partial derivative with respect to $x$ or $y$, respectively. 

\begin{lemma}[Semi-Lagrangian truncation error for $r = \infty$]
Let ${\cal D}_{p+1}$ be as in \eqref{eq:Dp+1_def}. 
Suppose that the solution $u(x,y,t)$ of \eqref{eq:ad_2D} is at least $p+1+s$ times continuously differentiable with respect to $x$ and $y$, and that the wave-speed in \eqref{eq:ad_2D} is independent of space, $(\alpha(x,y,t), \beta(x,y,t)) \equiv (\alpha(t), \beta(t))$.
Define $\bm{u}(t) \in \mathbb{R}^{n_x^2}$ as the vector composed of the PDE solution sampled in space at the mesh points and at time $t$, and let ${\cal S}_{p,\infty}^{(t_n, \delta t)}$ be the semi-Lagrangian discretization of \eqref{eq:ad_2D} that exactly locates departure points at time $t_n$.
Then, the local truncation error of this discretization can be expressed as
\begin{align} \label{eq:SL_var_trunc_2D}
\begin{split}
&\bm{u}(t_{n+1}) - {\cal S}_{p, \infty}^{(t_{n}, \delta t)} \bm{u}(t_{n})
= (-h)^{p+1} 
\bigg[
\diag \Big( f_{p+1} \big(\bm{\varepsilon}^{(t_{n}, \delta t)} \big) \Big) 
\frac{\left( I_{n_x} \otimes {\cal D}_{p+1} \right)}{h^{p+1}}
\\ 
&\quad
+
\diag \Big( f_{p+1} \big(\bm{\nu}^{(t_{n}, \delta t)} \big) \Big) 
\frac{\left( {\cal D}_{p+1} \otimes I_{n_x} \right)}{h^{p+1}}
\bigg]
\bm{u}(t_{n+1})
+ {\cal O}(h^{p+2}),
\end{split}
\end{align}
in which the entries of the vectors $\bm{\varepsilon}^{(t_{n}, \delta t)}, \bm{\nu}^{(t_{n}, \delta t)} \in \mathbb{R}^{n_x^2}$ associated with the mesh point $(x,y) = (x_i, y_j)$ are $\varepsilon_{ij}^{(t_{n}, \delta t)}$, and $\nu_{ij}^{(t_{n}, \delta t)}$, respectively. The polynomial $f_{p+1}$ is given in \eqref{eq:fpoly_def}.

Furthermore, the ideal coarse-grid operator defined by time-stepping across $t \in [t_n, t_n + m \delta t]$ with the $m$ fine-grid operators ${\cal S}_{p, \infty}^{(t_n+k \delta t, \delta t)}$, ${k = 0, \ldots, m-1}$, has a local truncation error given by
\begin{align} \label{eq:SL_var_ideal_trunc_2D} 
\begin{split}
&\bm{u}(t_{n+m})- \Bigg[ \prod_{k = 0}^{m-1} {\cal S}_{p, \infty}^{(t_n+k \delta t, \delta t)} \Bigg]
\bm{u}(t_n)  
=
(-h)^{p+1}
\Bigg[  
\diag \Bigg( \sum \limits_{k = 0}^{m-1} f_{p+1} \big(\bm{\varepsilon}^{(t_n+k \delta  t, \delta t)} \big) \Bigg) \times
\\
&\quad
\frac{\left( I_{n_x} \otimes {\cal D}_{p+1} \right)}{h^{p+1}}
+
\diag \Bigg( \sum \limits_{k = 0}^{m-1} f_{p+1} \big(\bm{\nu}^{(t_n+k \delta  t, \delta t)} \big) \Bigg)
\frac{\left( {\cal D}_{p+1} \otimes I_{n_x} \right)}{h^{p+1}}
\Bigg]
\bm{u}(t_{n+m})  
+ {\cal O}(h^{p+2}).
\end{split}
\end{align}
\end{lemma}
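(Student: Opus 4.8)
The plan is to prove both identities by generalizing the proofs of \Cref{lem:SL_trunc} and \Cref{cor:SL_ideal_trunc} to the tensor-product setting, the central new ingredient being the splitting of the two-dimensional interpolation error into its $x$- and $y$-direction contributions. For the first identity \eqref{eq:SL_var_trunc_2D}, since departure points are located exactly, the only error in the $(i,j)$th row of ${\cal S}_{p,\infty}^{(t_n,\delta t)}$ is the tensor-product interpolation error at the departure point. I would write the interpolation operator as $\Pi_x \Pi_y$, with $\Pi_x,\Pi_y$ the one-dimensional degree-$p$ interpolations in the $x$- and $y$-directions through the $p+1$ nearest neighbors, and decompose the error via $I - \Pi_x\Pi_y = (I - \Pi_x) + \Pi_x(I - \Pi_y)$. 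Each one-dimensional factor contributes, by the estimate already used in \Cref{lem:SL_trunc}, an error of size $(-h)^{p+1} f_{p+1}\big(\varepsilon_{ij}^{(t_n,\delta t)}\big)\,\partial_x^{p+1} u$ and $(-h)^{p+1} f_{p+1}\big(\nu_{ij}^{(t_n,\delta t)}\big)\,\partial_y^{p+1} u$, respectively, evaluated near the departure point. The correction $\Pi_x(I-\Pi_y)$ applied to the $x$-direction error is a product of two $\mathcal{O}(h^{p+1})$ quantities, hence $\mathcal{O}(h^{2(p+1)})$, which is absorbed into $\mathcal{O}(h^{p+2})$ for $p\ge 1$; so to leading order the two-dimensional interpolation error is simply the sum of the two one-dimensional errors.

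Next I would transport the two high-order derivatives from the departure point back to the arrival point $(x_i,y_j,t_{n+1})$, exactly as in the one-dimensional proof. Because the wave-speed is space-independent, differentiating \eqref{eq:ad_2D} $p+1$ times with respect to $x$ (respectively $y$) shows that $\partial_x^{p+1} u$ (respectively $\partial_y^{p+1} u$) is itself advected by \eqref{eq:ad_2D}, hence constant along characteristics; the $\mathcal{O}(h)$ discrepancy between the unknown mean-value point and the true departure point contributes only at $\mathcal{O}(h^{p+2})$ after Taylor expansion. Replacing $\partial_x^{p+1} u|_{(x_i,y_j,t_{n+1})}$ and $\partial_y^{p+1} u|_{(x_i,y_j,t_{n+1})}$ by their finite-difference surrogates $h^{-(p+1)}(I_{n_x}\otimes {\cal D}_{p+1})\bm{u}(t_{n+1})$ and $h^{-(p+1)}({\cal D}_{p+1}\otimes I_{n_x})\bm{u}(t_{n+1})$ (up to $\mathcal{O}(h^s)$), then assembling all rows, yields \eqref{eq:SL_var_trunc_2D}.

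For the ideal-operator estimate \eqref{eq:SL_var_ideal_trunc_2D}, I would repeat the inductive telescoping of \Cref{cor:SL_ideal_trunc}. Applying ${\cal S}_{p,\infty}^{(t_{n+1},\delta t)}$ to \eqref{eq:SL_var_trunc_2D} and using that, for a space-independent wave-speed, the two-dimensional semi-Lagrangian operator factors as a tensor product of one-dimensional circulant operators---and therefore commutes with both $I_{n_x}\otimes {\cal D}_{p+1}$ and ${\cal D}_{p+1}\otimes I_{n_x}$---while constant $\bm{\varepsilon}^{(t_n,\delta t)}$ and $\bm{\nu}^{(t_n,\delta t)}$ make the $\diag\big(f_{p+1}(\cdot)\big)$ factors scalar multiples of the identity, I can slide the semi-Lagrangian operator through both correction terms. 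Substituting the one-step expansion for ${\cal S}_{p,\infty}^{(t_{n+1},\delta t)}\bm{u}(t_{n+1})$ and retaining only the $\mathcal{O}(h^{p+1})$ terms accumulates the $x$- and $y$-coefficient sums independently; inducting over the remaining $m-2$ steps produces \eqref{eq:SL_var_ideal_trunc_2D}. The main obstacle is the first step: rigorously justifying that the tensor-product interpolation error splits additively into exactly the two stated one-dimensional contributions, with all mixed-derivative cross terms genuinely absorbed into $\mathcal{O}(h^{p+2})$, and verifying that the commutativity invoked in the telescoping holds simultaneously for \emph{both} tensor-factored difference operators rather than only one direction at a time.
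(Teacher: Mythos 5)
Your proposal is correct and follows essentially the same route as the paper, which simply states that the proof is analogous to the one-dimensional \Cref{lem:SL_trunc} and \Cref{cor:SL_ideal_trunc} with the sole new ingredient being a two-dimensional (tensor-product) polynomial interpolation error estimate, cited from the thesis; your splitting $I - \Pi_x\Pi_y = (I-\Pi_x) + \Pi_x(I-\Pi_y)$ with the cross term $(I-\Pi_x)(I-\Pi_y)u = {\cal O}(h^{2(p+1)})$ is precisely the content of that estimate. The only cosmetic quibble is the phrasing of the cross-term bound: it is the residual $(I-\Pi_x)(I-\Pi_y)u$, not ``$\Pi_x(I-\Pi_y)$ applied to the $x$-direction error,'' that is the product of two interpolation errors, but the conclusion and the rest of the argument are sound.
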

\begin{proof}
We omit details of these proofs since they follow analogously to those for the one-dimensional cases given in \Cref{lem:SL_trunc} and \Cref{cor:SL_ideal_trunc}. 
The one caveat is that an error estimate for two-dimensional polynomial interpolation needs to be applied (the required estimate is given as \cite[Lem. 4.11]{KrzysikThesis2021}).
\end{proof}

Generalizing the one-dimensional coarse-grid operator from \Cref{sec:exact}, we propose the following coarse-grid operator for semi-Lagrangian discretizations ${\cal S}_{p,r}^{(t_n, \delta t)}$ of the two-dimensional advection problem \eqref{eq:ad_2D}
\begin{align} 
\label{eq:BEOper_2D}
\begin{aligned}
&\Phi^{(t_n, \delta t)}_{p, r_*}
=
\left[
I_{n_x^2}
-
\diag 
\Bigg( 
f_{p+1} \big(\bm{\varepsilon}^{(t_n , m \delta t)} \big)
-
\sum \limits_{k = 0}^{m-1} f_{p+1} \big(\bm{\varepsilon}^{(t_n+k \delta  t, \delta t)} \big)
 \Bigg)
\left( I_{n_x} \otimes {\cal D}_{p+1} \right)
\right.
\\
&\quad-
\left.
\diag \Bigg( 
f_{p+1} \big(\bm{\nu}^{(t_n , m \delta t)} \big)
-
\sum \limits_{k = 0}^{m-1} f_{p+1} \big(\bm{\nu}^{(t_n+k \delta  t, \delta t)} \big) \Bigg)
\left( {\cal D}_{p+1} \otimes I_{n_x} \right)
\right]^{-1} {\cal S}_{p,r_*}^{(t_n, m \delta t)}.
\end{aligned}
\end{align}

\subsection{Numerical results}
\label{sec:2D_num_results}

\renewcommand{\arraystretch}{1.1}
\begin{table}[b!]
  \centering
  \begin{tabular}{| c | c | lll | lll |}  
  \cline{3-8}
     \multicolumn{2}{ c|}{} & \multicolumn{3}{ c|}{$(\alpha, \beta) = \eqref{eq:alpha-const_2D}$} & \multicolumn{3}{ c|}{$(\alpha, \beta) = \eqref{eq:alpha-var_2D}$}\\\cline{3-8}
   \multicolumn{2}{ c|}{} & \multicolumn{3}{ c|}{$m$} & \multicolumn{3}{ c|}{$m$} \\\hline
$p,r$ & $n_x^2 \times n_t$ & \multicolumn{1}{c}{4} & \multicolumn{1}{c}{8} & \multicolumn{1}{c|}{16} & \multicolumn{1}{c}{4} & \multicolumn{1}{c}{8} & \multicolumn{1}{c|}{16} \\\hline
\Xhline{2\arrayrulewidth}
{\multirow{4}{*}{$1,1$}}
 & $(2^6)^2 \times 2^{10}$      & 14 (50) & 12 (45) & 11 (27)      & 12 (68) & 11 (44) & 12 (27)  \\\cline{2-8}
 & $(2^7)^2 \times 2^{11}$       & 14 (86) & 12 (81) & 11 (46)      & 12 (157) & 12 (80) & 11 (46)   \\\cline{2-8}
 & $(2^8)^2 \times 2^{12}$      & 14      & 12      & 11           & 12      & 12      & 12       \\\cline{2-8}
 & $(2^{9})^2 \times 2^{13}$   & 14      & 12      & 11           & 14      & 13     & 13       \\\hline
\Xhline{2\arrayrulewidth}
{\multirow{4}{*}{$3,3$}}
 & $(2^6)^2 \times 2^{10}$      & 22 (57) & 17 (47) & 15 (27)       & 16 (70) & 15 (44) & 15 (27)  \\\cline{2-8}
 & $(2^7)^2 \times 2^{11}$       & 23 (98) & 17 (84) & 15 (47)       & 15 (170) & 16 (81) & 16 (46)   \\\cline{2-8}
 & $(2^8)^2 \times 2^{12}$      & 23      & 17      & 15            & 16      & 16      & 17       \\\cline{2-8}
 & $(2^{9})^2 \times 2^{13}$   & 23      & 17      & 16            & 19      & 18      & 17       \\\hline
\Xhline{2\arrayrulewidth}
{\multirow{4}{*}{$5,5$}}
 & $(2^6)^2 \times 2^{10}$      & 32 (63) & 22 (48) & 18 (27)       & 19 (69) & 18 (44) & 18 (27)  \\\cline{2-8}
 & $(2^7)^2 \times 2^{11}$       & 33 (110) & 23 (88) & 20 (48)      & 19 (169) & 19 (80) & 20 (46)   \\\cline{2-8}
 & $(2^8)^2 \times 2^{12}$      & 34      & 24     & 20           & 19      & 21      & 22       \\\cline{2-8}
 & $(2^{9})^2 \times 2^{13}$   & 34          & 24      & 21            & 24      & 23      & 23       \\\hline
 \end{tabular}
  \caption{
	Number of MGRIT V-cycles to reach convergence on the two-dimensional advection problem \eqref{eq:ad_2D}, with wave-speed indicated in the top row of the table.
	The fine-grid discretization is ${\cal S}_{p,r}^{(t_n, \delta t)}$, and the coarse-grid operator is the dissipatively corrected semi-Lagrangian operator \eqref{eq:BEOper_2D}. 
	For the two smallest mesh resolutions $(n_x^2, n_t) = \big((2^6)^2 \times 2^{10}, (2^6)^2 \times 2^{11}\big)$, iterations reported in parenthesis corresponds to using, instead, the rediscretized semi-Lagrangian operator ${\cal S}_{p,r}^{(t_n, m^{\ell} \delta t)}$ on coarse levels.
	\label{tab:mutilevel_iters_2D}
}
\end{table}

We now present numerical results for solving the two-dimensional advection equation \eqref{eq:ad_2D} with the constant wave-speed
\begin{align} 
\label{eq:alpha-const_2D}
\big(\alpha(x, y, t), \beta(x, y, t) \big) = (1,1),
\end{align}
as well as the variable wave-speed
\begin{align}
\label{eq:alpha-var_2D}
\big(\alpha(x, y, t), \beta(x, y, t) \big) 
= 
\left( \sin^2( \pi y ) \cos \left( \frac{2 \pi t}{3.4} \right ), -\cos^2( \pi x ) \cos \left( \frac{2 \pi t}{3.4} \right ) \right).
\end{align}
Plots of the velocity field and the solution of PDE \eqref{eq:ad_2D} associated with the wave-speed \eqref{eq:alpha-var_2D} are given in \Cref{SM:fig:velocity_field_2D,SM:fig:sol_evolution_2D}, respectively. Note that the PDE solution associated with \eqref{eq:alpha-var_2D} is periodic in time, with period $3.4$. 

The numerical tests use a fine-grid time-step size of $\delta t = 0.85 h$.
%
In \Cref{tab:mutilevel_iters_2D}, we report the number of MGRIT V-cycles to converge using the dissipatively corrected operator \eqref{eq:BEOper_2D}.
On coarse levels $\ell > 1$, we use a straightforward generalization of the level $\ell = 1$ operator \eqref{eq:BEOper_2D} by following the development of the multilevel operator in \Cref{sec:multilevel} for the one-dimensional case.
Solutions of coarse-grid linear systems are approximated by iterating GMRES until either the relative residual norm is $10^{-2}$ or smaller, or the number of iterations reaches 10.
The coarse-grid semi-Lagrangian operators ${\cal S}_{p,r_*}^{(t_n, m^{\ell} \delta t)}$ used in \eqref{eq:BEOper_2D} estimate departure points using a generalization of the one-dimensional backtracking and linear interpolation strategy from \Cref{sec:backtracking}; see \Cref{SM:sec:depart_point_estimation} for details.
Finally, for the two smallest mesh resolutions, \Cref{tab:mutilevel_iters_2D} also includes iteration counts for the naive choice of simply rediscretizing the semi-Lagrangian discretization; in these rediscretization experiments, departure points on a coarse-level $\ell > 0$ are estimated accurately using $m^{\ell}$ ERK steps of size $\delta t$.

Iteration counts in \Cref{tab:mutilevel_iters_2D} for the rediscretized coarse-grid operator are large, and grow strongly as the mesh is refined. Therefore, as for the one-dimensional case, we conclude that simply rediscretizing the semi-Lagrangian coarse-grid operator does not lead to a robust MGRIT solver for our two-dimensional model problem.
In contrast, the proposed modified semi-Lagrangian coarse-grid operator \eqref{eq:BEOper_2D} yields much smaller iteration counts, that seem nearly constant as the mesh is refined. 
These sequential numerical results highlight the potential our proposed coarse-grid operator \eqref{eq:BEOper_2D} has for parallel-in-time simulations of advection problems.

\begin{remark}[Potential non-robustness for spatially varying wave-speeds]
In a small number of test problems with spatially varying wave-speeds, we have observed less favorable MGRIT convergence when using the proposed coarse-grid operator.
While we have sometimes also encountered these issues in one dimension, they seem most pronounced in two dimensions, and they appear related to the time-step size on some coarse level not being sufficiently small.
Recall that we conjectured in \Cref{rem:space_vary_estimates} that in one dimension, the error estimates for spatially independent wave-speed functions carry over to the spatially variable case up to terms of size ${\cal O}(h^{p+1} \delta t)$.
We suspect that the robustness issues we describe above are a consequence of such additional ${\cal O}(h^{p+1} \delta t)$ terms that are not captured by the proposed coarse-grid operator.
In particular, if one coarsens down to $\delta t = {\cal O}(1)$ on some coarse level, then such ${\cal O} (h^{p+1} \delta t)$ terms may no longer be small relative to the ${\cal O}(h^{p+1})$ terms that are included in the proposed coarse-grid operator.
It remains on-going work to further investigate and address this issue, potentially by deriving estimates that explicitly include the ${\cal O} (h^{p+1} \delta t)$ terms and then incorporating them into the coarse-grid operator.
Regardless, when $\delta t$ is sufficiently small on all levels, as is the case for all the reported results, these issues do not occur.
\end{remark}

\section{Parallel results}
\label{sec:parallel}

In this section we present parallel strong-scaling results for MGRIT V-cycles using the coarse-grid operators developed in earlier sections.
Since we want to demonstrate the efficacy of our proposed coarse-grid operator, we consider tests that use parallelism in time only, and we leave a space-time parallel implementation to future work.
The results were generated on Ruby, a Linux cluster at Lawrence Livermore National Laboratory consisting of 1,480 compute nodes, with 56 Intel Xeon CLX-8276L cores per node.
The results used the following node configurations: $(\textrm{number of MPI tasks}, \textrm{number of nodes}) = ((1, 1), (4,2), (16, 2), (64, 3), (256, 10),\\ (1024, 37))$.
Tests using four and 16 MPI tasks were run across two nodes since we observed this was faster than on a single node. The number of nodes used in the larger tests was chosen to ensure the number of MPI tasks per node did not exceed 28 (the number of physical cores per node).

We consider strong-scaling studies for both the one- and two-dimensional advection problems \eqref{eq:ad} and \eqref{eq:ad_2D}, respectively. 
The tests use the largest problem sizes considered previously of $n_x \times n_t = 2^{12} \times 2^{14}$, and $n_x^2 \times n_t = (2^9)^2 \times 2^{13}$ for the one- and two-dimensional problems, respectively (see \Cref{tab:multilevel_iters,tab:mutilevel_iters_2D}).
For the one-dimensional problem, we have done scaling studies using coarsening factors $m = 4, 8, 16$, as well as an aggressive coarsening strategy using $m = 16$ on the first level and $m = 4$ on all other levels. 
For this problem, the aggressive coarsening strategy yielded the largest speed-ups, so we report results for this coarsening strategy only. Based on this finding we have adopted this aggressive coarsening strategy for our two-dimensional tests.
%

%
%
%
%
%
\begin{figure}[t!]
\centerline{
\includegraphics[scale=0.345]{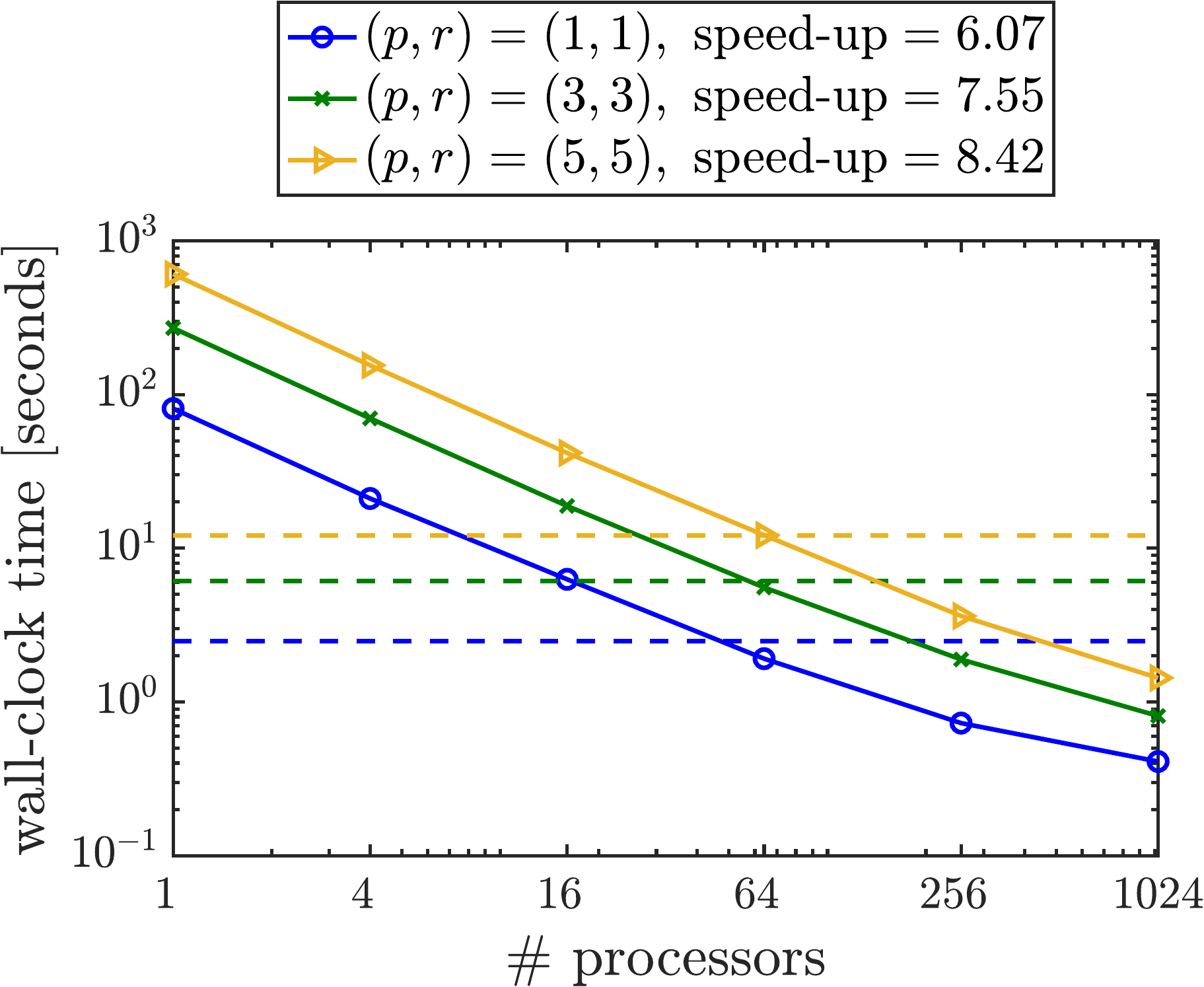}
\quad
\includegraphics[scale=0.335]{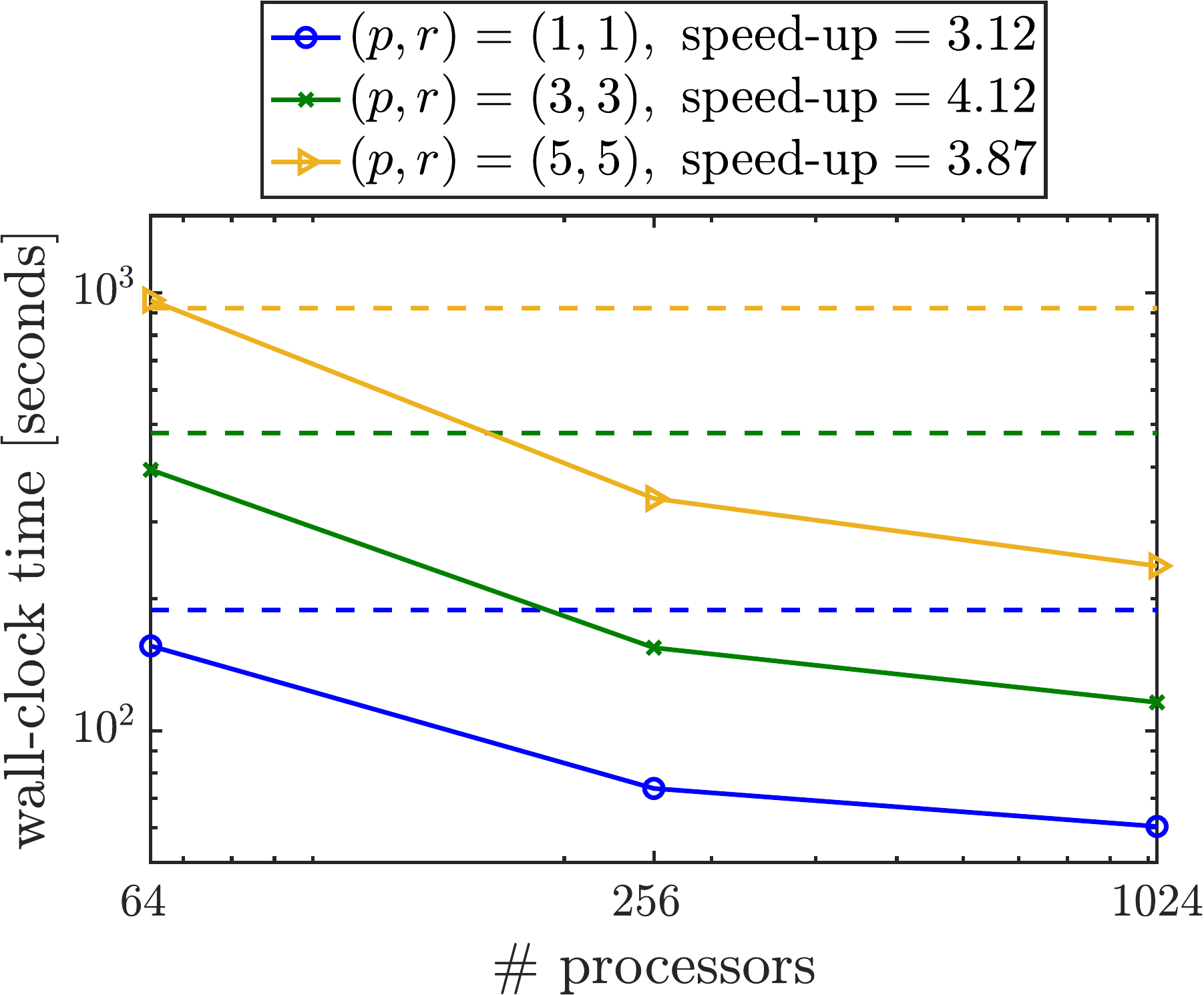}
}
\caption{Strong parallel scaling using time-only parallelism: Runtimes of MGRIT V-cycles with an aggressive coarsening factor of $m= 16$ on the first level followed by $m = 4$ on all coarser levels.
The semi-Lagrangian discretizations are of orders $(p,r) = ((1,1), (3,3), (5,5))$.
Left: The one-dimensional PDE \eqref{eq:ad} with wave-speed \eqref{eq:alpha4}, on space-time mesh having $n_x \times n_t = 2^{12} \times 2^{14}$ points.
Right: The two-dimensional PDE \eqref{eq:ad_2D} with wave-speed \eqref{eq:alpha-var_2D}, on a space-time mesh having $n_x^2 \times n_t = (2^9)^2 \times 2^{13}$ points.
Dashed lines represent runtimes of time-stepping on one processor.
Speed-ups obtained using 1024 processors are listed in the legends.
\label{fig:strong_scaling}
}
\end{figure}

Runtimes as a function of processor count are shown in \Cref{fig:strong_scaling}.
For the two-dimensional problem, a minimum of 64 processors is used since runtimes on fewer processors would have been too large.
For both one- and two-dimensional problems, the cross-over point at which MGRIT is faster than time-stepping is around 64 processors.
Notice that the speed-ups we achieve on 1024 processors for the two-dimensional problem are roughly a factor of two smaller than for the one-dimensional problem; this is perhaps unsurprising since half as many points in time are used, thus cutting the potential for speed-ups roughly in half.
Additional one-dimensional tests solving to discretization error rather than reducing the $\ell^2$-norm of the space-time residual by 10 orders of magnitude are given in \Cref{SM:sec:strong-scaling}.

Making fair comparisons between speed-ups obtained on different problems using different setups is difficult; however, it is fair to say that when using a similar number of processors, the speed-ups we obtain here are smaller than those achieved for MGRIT applied to diffusion-dominated problems, such as the heat equation \cite{Falgout_etal_2014,Falgout_etal_2017_nonlin}.
This is likely due to the increased complexity of our coarse-grid operator relative to rediscretization (as is typically used for diffusion-dominated problems), and also the somewhat higher iteration counts we require to reach convergence.
Nonetheless, the speed-ups we report are quite strong relative to speed-ups reported elsewhere in the literature for hyperbolic problems, especially since we also consider high-order discretizations \cite{Nielsen_etal_2018, Howse_etal_2019}. 
Furthermore, the speed-ups we obtain for the one-dimensional problem are somewhat comparable to those in \cite{DeSterck_etal_2021} that used optimized coarse-grid operators for constant-wave-speed advection problems; the methods from \cite{DeSterck_etal_2021}, however, are impractical for real computations and do not extend to variable wave-speeds.

\section{Conclusions}
\label{sec:conclusions}

Robust and efficient parallel-in-time integration of advection-dominated PDEs using the iterative multigrid-in-time method MGRIT, and the closely related Parareal method, is notoriously difficult. 
\textit{Rediscretizing} the fine-grid problem on coarse grids often results in MGRIT diverging on advection-dominated PDEs, despite the same technique typically leading to excellent convergence for diffusion-dominated problems.
To date, no practical coarse-grid operators have been proposed for these algorithms that are capable of providing speed-up over sequential time-stepping, even for the simplest case of constant-wave-speed linear advection.

We have considered a specific class of semi-Lagrangian discretizations for linear advection problems, including those in two spatial dimensions, and with variable wave-speeds.
For these problems, we have proposed a novel modified semi-Lagrangian coarse-grid operator that, for the first time, leads to fast and robust MGRIT convergence.
Parallel results show the coarse-grid operator can provide speed-up over sequential time-stepping, including for high-order discretizations.
The proposed coarse-grid operator applies a semi-Lagrangian discretization followed by a correction, designed so that the truncation error of this composition approximately matches that of the so-called ideal coarse-grid operator.
For the problems considered here, the correction adds dissipation to the coarse-grid semi-Lagrangian discretization via solving a linear system resembling a backward Euler time discretization of a heat-like equation.

The ideas presented here open many directions for future research, including adapting our approach to dispersive semi-Lagrangian schemes, since our current approach is only effective for dissipative semi-Lagrangian schemes.
A key component of our on-going research is the extension of the ideas presented here to enable the robust MGRIT solution of more classical method-of-lines discretizations of hyperbolic PDEs; that is, coupling a fine-grid method-of-lines discretization with a coarse-grid operator that is semi-Lagrangian in nature with a truncation error perturbation.
For constant-wave-speed problems, we have promising initial results for discretizations considered previously in \cite{DeSterck_etal_2021}, which use explicit and implicit Runge-Kutta in time, and finite differences in space.
Another direction of future work is to extend these ideas to nonlinear conservation laws.



\bibliographystyle{siamplain}
\bibliography{pit_advection_bib}

\begin{thebibliography}{10}

\bibitem{Bal_Maday_2002}
{\sc G.~Bal and Y.~Maday}, {\em A
  {\textquotedblleft}parareal{\textquotedblright} time discretization for
  non-linear {PDE}'s with application to the pricing of an american put}, in
  Lecture Notes in Computational Science and Engineering, Springer Berlin
  Heidelberg, 2002, pp.~189--202.

\bibitem{Bank_etal_2006}
{\sc R.~E. Bank, J.~W.~L. Wan, and Z.~Qu}, {\em Kernel preserving multigrid
  methods for convection-diffusion equations}, SIAM J. Matrix Anal. Appl., 27
  (2006), pp.~1150--1171.

\bibitem{Brandt_1981}
{\sc A.~Brandt}, {\em Multigrid solvers for non-elliptic and
  singular-perturbation steady-state problems}.
\newblock The Weizmann Institute of Science. Rehovot, Israel. (unpublished),
  1981.

\bibitem{Brandt_Yavneh_1993}
{\sc A.~Brandt and I.~Yavneh}, {\em Accelerated multigrid convergence and
  high-reynolds recirculating flows}, SIAM J. Sci. Comput., 14 (1993),
  pp.~607--626.

\bibitem{Cai_etal_2021}
{\sc X.~Cai, S.~Boscarino, and J.-M. Qiu}, {\em High order semi-lagrangian
  discontinuous galerkin method coupled with runge-kutta exponential
  integrators for nonlinear vlasov dynamics}, J. Comput. Phys., 427 (2021),
  p.~110036.

\bibitem{Cai_etal_2017}
{\sc X.~Cai, W.~Guo, and J.-M. Qiu}, {\em A high order conservative
  semi-{Lagrangian} discontinuous {Galerkin} method for two-dimensional
  transport simulations}, J. Sci. Comput., 73 (2017), pp.~514---542.

\bibitem{Chen_etal_2014}
{\sc F.~Chen, J.~S. Hesthaven, and X.~Zhu}, {\em On the use of reduced basis
  methods to accelerate and stabilize the parareal method}, in Reduced Order
  Methods for modeling and computational reduction, Springer, 2014,
  pp.~187--214.

\bibitem{Dai_Maday_2013}
{\sc X.~Dai and Y.~Maday}, {\em Stable parareal in time method for first-and
  second-order hyperbolic systems}, SIAM J. Sci. Comput., 35 (2013),
  pp.~A52--A78.

\bibitem{Davis_1975}
{\sc P.~J. Davis}, {\em Interpolation and approximation}, Dover Publications,
  INC., New York, 1975.

\bibitem{DeSterck_etal_2021}
{\sc H.~{De Sterck}, R.~D. Falgout, S.~Friedhoff, O.~A. Krzysik, and S.~P.
  MacLachlan}, {\em {Optimizing multigrid reduction-in-time and Parareal
  coarse-grid operators for linear advection}}, Numer. Linear Algebra Appl., 28
  (2021).

\bibitem{DeSterck_etal_2019}
{\sc H.~{De Sterck}, S.~Friedhoff, A.~J.~M. Howse, and S.~P. MacLachlan}, {\em
  Convergence analysis for parallel-in-time solution of hyperbolic systems},
  Numer. Linear Algebra Appl., 27 (2020), p.~e2271.

\bibitem{Dobrev_etal_2017}
{\sc V.~A. Dobrev, T.~Kolev, N.~A. Petersson, and J.~B. Schroder}, {\em
  Two-level convergence theory for multigrid reduction in time ({MGRIT})},
  {SIAM} J. Sci. Comput., 39 (2017), pp.~S501--S527.

\bibitem{Durran_2010}
{\sc D.~R. Durran}, {\em Numerical Methods for Fluid Dynamics}, Springer New
  York, second~ed., 2010.

\bibitem{Falcone_Ferretti_2014}
{\sc M.~Falcone and R.~Ferretti}, {\em Semi-Lagrangian Approximation Schemes
  for Linear and Hamilton Jacobi Equations}, CAMBRIDGE, Feb. 2014.

\bibitem{Falgout_etal_2014}
{\sc R.~D. Falgout, S.~Friedhoff, T.~V. Kolev, S.~P. MacLachlan, and J.~B.
  Schroder}, {\em Parallel time integration with multigrid}, SIAM J. Sci.
  Comput., 36 (2014), pp.~C635--C661.

\bibitem{Falgout_etal_2019}
{\sc R.~D. Falgout, M.~Lecouvez, and C.~S. Woodward}, {\em A parallel-in-time
  algorithm for variable step multistep methods}, J. Comput. Sci., 37 (2019),
  p.~101029.

\bibitem{Falgout_etal_2017_nonlin}
{\sc R.~D. Falgout, T.~A. Manteuffel, B.~O'Neill, and J.~B. Schroder}, {\em
  Multigrid reduction in time for nonlinear parabolic problems: A case study},
  {SIAM} J. Sci. Comput., 39 (2017), pp.~S298--S322.

\bibitem{Gander_2008}
{\sc M.~J. Gander}, {\em Analysis of the parareal algorithm applied to
  hyperbolic problems using characteristics}, Soc. Esp. Mat. Apl., 42 (2008),
  pp.~21--35.

\bibitem{Gander_2015}
{\sc M.~J. Gander}, {\em 50 years of time parallel time integration}, in
  Contrib. Math. Comput. Sci., Springer International Publishing, 2015,
  pp.~69--113.

\bibitem{Gander_etal_2019}
{\sc M.~J. Gander, L.~Halpern, J.~Rannou, and J.~Ryan}, {\em A direct time
  parallel solver by diagonalization for the wave equation}, SIAM J. Sci.
  Comput., 41 (2019), pp.~A220--A245.

\bibitem{Gander_etal_2018}
{\sc M.~J. Gander, F.~Kwok, and H.~Zhang}, {\em Multigrid interpretations of
  the parareal algorithm leading to an overlapping variant and {MGRIT}},
  Comput. Vis. Sci., 19 (2018), pp.~59--74.

\bibitem{Gander_Vandewalle_2007}
{\sc M.~J. Gander and S.~Vandewalle}, {\em Analysis of the parareal
  time-parallel time-integration method}, {SIAM} J. Sci. Comput., 29 (2007),
  pp.~556--578.

\bibitem{Hessenthaler_etal_2018}
{\sc A.~Hessenthaler, D.~Nordsletten, O.~Röhrle, J.~B. Schroder, and R.~D.
  Falgout}, {\em Convergence of the multigrid reduction in time algorithm for
  the linear elasticity equations}, Numer. Linear Algebra Appl., 25 (2018),
  p.~e2155.

\bibitem{Howse2017_thesis}
{\sc A.~Howse}, {\em Nonlinear Preconditioning Methods for Optimization and
  Parallel-In-Time Methods for 1D Scalar Hyperbolic Partial Differential
  Equations}, PhD thesis, University of Waterloo, Waterloo, Canada, 2017.

\bibitem{Howse_etal_2019}
{\sc A.~J.~M. Howse, H.~De~Sterck, R.~D. Falgout, S.~MacLachlan, and
  J.~Schroder}, {\em Parallel-in-time multigrid with adaptive spatial
  coarsening for the linear advection and inviscid {B}urgers equations}, {SIAM}
  J. Sci. Comput., 41 (2019), pp.~A538--A565.

\bibitem{Huang_etal_2016}
{\sc C.-S. Huang, T.~Arbogast, and C.-H. Hung}, {\em {A semi-Lagrangian finite
  difference {WENO} scheme for scalar nonlinear conservation laws}}, J. Comput.
  Phys., 322 (2016), pp.~559--585.

\bibitem{KrzysikThesis2021}
{\sc O.~A. Krzysik}, {\em {Multilevel parallel-in-time methods for
  advection-dominated PDEs}}, Monash University,  (2021),
  \url{https://doi.org/10.26180/17108537.v1}.

\bibitem{Lions_etal_2001}
{\sc J.-L. Lions, Y.~Maday, and G.~Turinici}, {\em R{\'e}solution d'edp par un
  sch{\'e}ma en temps parar{\'e}el}, C. R. Acad. Sci-Series I-Mathematics, 332
  (2001), pp.~661--668.

\bibitem{Liu_Wu_2020}
{\sc J.~Liu and S.-L. Wu}, {\em A fast block $\alpha$-circulant preconditoner
  for all-at-once systems from wave equations}, SIAM J. Matrix Anal. Appl., 41
  (2020), pp.~1912--1943.

\bibitem{Mengaldo_etal2018}
{\sc G.~Mengaldo, A.~Wyszogrodzki, M.~Diamantakis, S.-J. Lock, F.~X. Giraldo,
  and N.~P. Wedi}, {\em Current and emerging time-integration strategies in
  global numerical weather and climate prediction}, Arch. Comput. Methods Eng.,
  26 (2018), pp.~663--684.

\bibitem{Nielsen_etal_2018}
{\sc A.~S. Nielsen, G.~Brunner, and J.~S. Hesthaven}, {\em Communication-aware
  adaptive {P}arareal with application to a nonlinear hyperbolic system of
  partial differential equations}, J. Comput. Phys., 371 (2018), pp.~483--505.

\bibitem{Ong_Schroder_2020}
{\sc B.~W. Ong and J.~B. Schroder}, {\em Applications of time parallelization},
  Comput. Vis. Sci., 23 (2020).

\bibitem{Qiu_Shu_2011}
{\sc J.-M. Qiu and C.-W. Shu}, {\em Conservative high order semi-lagrangian
  finite difference {WENO} methods for advection in incompressible flow}, J.
  Comput. Phys., 230 (2011), pp.~863--889.

\bibitem{Ruprecht_2018}
{\sc D.~Ruprecht}, {\em Wave propagation characteristics of {P}arareal},
  Comput. Vis. Sci., 19 (2018), pp.~1--17.

\bibitem{Schmitt_etal_2018}
{\sc A.~Schmitt, M.~Schreiber, P.~Peixoto, and M.~Sch{\"a}fer}, {\em A
  numerical study of a semi-{L}agrangian {P}arareal method applied to the
  viscous {B}urgers equation}, Comput. Vis. Sci., 19 (2018), pp.~45--57.

\bibitem{Schroder_2018}
{\sc J.~B. Schroder}, {\em On the use of artificial dissipation for hyperbolic
  problems and multigrid reduction in time ({MGRIT})}.
\newblock LLNL Tech Report LLNL-TR-750825, 2018.

\bibitem{Sivas_etal_2021}
{\sc A.~A. Sivas, B.~S. Southworth, and S.~Rhebergen}, {\em {AIR} algebraic
  multigrid for a space-time hybridizable discontinuous galerkin discretization
  of advection(-diffusion)}, {SIAM} Journal on Scientific Computing, 43 (2021),
  pp.~A3393--A3416.

\bibitem{Smolarkiewicz_Pudykiewicz_1992}
{\sc P.~K. Smolarkiewicz and J.~A. Pudykiewicz}, {\em A class of
  semi-lagrangian approximations for fluids}, J. Atmospheric Sci., 49 (1992),
  pp.~2082--2096.

\bibitem{Southworth_2019}
{\sc B.~S. Southworth}, {\em Necessary conditions and tight two-level
  convergence bounds for parareal and multigrid reduction in time}, SIAM J.
  Matrix Anal. Appl., 40 (2019), pp.~564--608.

\bibitem{Staniforth_Cote1991}
{\sc A.~Staniforth and J.~C{\^{o}}t{\'{e}}}, {\em Semi-lagrangian integration
  schemes for atmospheric models{\textemdash}a review}, Mon. Wea. Rev., 119
  (1991), pp.~2206--2223.

\bibitem{Steiner_etal_2015}
{\sc J.~Steiner, D.~Ruprecht, R.~Speck, and R.~Krause}, {\em Convergence of
  {P}arareal for the {N}avier-{S}tokes equations depending on the {R}eynolds
  number}, in Numerical Mathematics and Advanced Applications-ENUMATH 2013,
  Springer, 2015, pp.~195--202.

\bibitem{Trottenberg_etal_2001}
{\sc U.~Trottenberg, C.~W. Oosterlee, and A.~Schuller}, {\em Multigrid},
  Academic press, 2001.

\bibitem{Wan_Chan_2003}
{\sc W.~L. Wan and T.~F. Chan}, {\em A phase error analysis of multigrid
  methods for hyperbolic equations}, SIAM J. Sci. Comput., 25 (2003),
  pp.~857--880.

\bibitem{Williamson2007}
{\sc D.~L. Williamson}, {\em The evolution of dynamical cores for global
  atmospheric models}, J. Meteorol. Soc. Jpn., 85B (2007), pp.~241--269.

\bibitem{xbraid}
{\em {XBraid: Parallel multigrid in time}}.
\newblock \url{http://llnl.gov/casc/xbraid}.

\bibitem{Yavneh_1998}
{\sc I.~Yavneh}, {\em Coarse-grid correction for nonelliptic and singular
  perturbation problems}, SIAM J. Sci. Comput., 19 (1998), pp.~1682--1699.

\bibitem{Yavneh_etal_1998}
{\sc I.~Yavneh, C.~H. Venner, and A.~Brandt}, {\em Fast multigrid solution of
  the advection problem with closed characteristics}, SIAM J. Sci. Comput., 19
  (1998), pp.~111--125.

\end{thebibliography}


\pagebreak

\setcounter{section}{0}
\setcounter{equation}{0}
\setcounter{figure}{0}
\setcounter{table}{0}
\setcounter{page}{1}
\makeatletter
\renewcommand{\thesection}{SM\arabic{section}}
\renewcommand{\theequation}{SM\arabic{equation}}
\renewcommand{\thefigure}{SM\arabic{figure}}
\renewcommand{\thetable}{SM\arabic{table}}
\renewcommand{\thepage}{SM\arabic{page}}

\thispagestyle{plain} 

\headers{SUPPLEMENTARY MATERIALS: SL MGRIT coarse-grid operators}{H. De Sterck, R. D. Falgout, O. A. Krzysik}

\begin{center}
    \textbf{\normalsize\MakeUppercase{
    Supplementary Materials:
    Fast multigrid reduction-in-time for advection via modified semi-Lagrangian coarse-grid operators}} 
    \vspace{6ex}
\end{center}



These supplementary materials are organized as follows.
\Cref{SM:sec:SLtrunc_num_evidence} presents supporting numerical evidence for \Cref{lem:SL_ideal_pert} and the claims made in \Cref{rem:space_vary_estimates}.
\Cref{SM:sec:GMRES_FAS} describes an implementation detail relating to the use of GMRES to inexactly solve coarse-grid linear systems.
\Cref{SM:sec:one-dim-num-tests} describes some further details about the variable-wave-speed test problems we consider in one dimension.
\Cref{SM:sec:strong-scaling} provides some additional parallel strong-scaling results for the one-dimensional problem.
\Cref{SM:sec:two-dim-num-tests} describes some further details about the variable-wave-speed test problem we consider in two dimensions.
Finally, \Cref{SM:sec:depart_point_estimation} provides a strategy for estimating coarse-grid departure points in two spatial dimensions.

\section{Supporting numerical evidence for \Cref{lem:SL_ideal_pert} and claims made in \Cref{rem:space_vary_estimates}}
\label{SM:sec:SLtrunc_num_evidence}

In this section, we provide supporting numerical evidence for \Cref{lem:SL_ideal_pert} and the claims made in \Cref{rem:space_vary_estimates}. 
To do so, we test numerically to what extent (if any) the relationships \eqref{eq:SL_ideal_pert1} and \eqref{eq:SL_ideal_pert3} hold when the wave-speed function varies in space.
In particular, we consider the following two wave-speed functions
\begin{align}
\label{SM:eq:alpha2}
\alpha(x,t) &= \cos(2 \pi t), \\
\label{SM:eq:alpha4}
\alpha(x,t) &= \cos(2 \pi t) \cos(2 \pi x),
\end{align}
and we integrate from time $t_n = 0$ to time $t_n + m \delta t$, taking $m = 4$. To mimic \textit{exactly locating} departure points---as the idealized semi-Lagrangian schemes ${\cal S}_{p, \infty}^{(t_n + k \delta t, \delta t)}$ and ${\cal S}_{p, \infty}^{(t_n, m \delta t)}$ in \eqref{eq:SL_ideal_pert1} and \eqref{eq:SL_ideal_pert3} do---we use MATLAB's \texttt{ode45} with very tight tolerances to integrate backwards along local characteristics with high accuracy.

Based on the results in \Cref{lem:SL_ideal_pert}, numerically we measure the following quantity under mesh refinement in $h$:
\begin{align} \label{SM:eq:var_conj_plot_quantity}
\frac{
\left\Vert 
\prod \limits_{k = 0}^{m-1} {\cal S}_{p, \infty}^{(t_n + k \delta t, \delta t)} \bm{u}(t_n) 
- 
C^{(t_n, m \delta t)} {\cal S}_{p, \infty}^{(t_n, m \delta t)} \bm{u}(t_n)  
\right\Vert_{\infty}}{
\left\Vert 
\bm{\varphi}_{p+1}^{(t_n, m \delta t)}
\right\Vert_{\infty}
}.
\end{align}
Here, $C^{(t_n, m \delta t)}$ is either the identity matrix, or \\$C^{(t_n, m \delta t)} = {\cal B}_{p+1}^{(t_n, m \delta t)} := \left[I - \diag \Big( \bm{\varphi}_{p+1}^{(t_n, m \delta t)} \Big)
{\cal D}_{p+1}
\right]^{-1}$, which can be thought of as a \textit{correction} matrix that may  map the rediscretized coarse-grid operator ${\cal S}_{p, \infty}^{(t_n, m \delta t)} $ closer to the ideal coarse-grid operator $\prod \limits_{k = 0}^{m-1} {\cal S}_{p, \infty}^{(t_n + k \delta t, \delta t)}$.

First, in \Cref{SM:fig:lem_verification} we provide numerical verification of \Cref{lem:SL_ideal_pert}. That is, we plot the quantity \eqref{SM:eq:var_conj_plot_quantity} for the spatially independent wave-speed function \eqref{SM:eq:alpha2}.
If $C^{(t_n, m \delta t)} = I$, then from \eqref{eq:SL_ideal_pert1} we expect \eqref{SM:eq:var_conj_plot_quantity} to decay as ${\cal O}(h^{p+1})$, which is indeed what we observe, whether $\delta t = {\cal O}(h)$, or $\delta t = {\cal O}(1)$.
Furthermore, if $C^{(t_n, m \delta t)} = {\cal B}_{p+1}^{(t_n, m \delta t)}$, then from \eqref{eq:SL_ideal_pert3} we expect \eqref{SM:eq:var_conj_plot_quantity} to decay as ${\cal O}(h^{p+2})$, which is indeed what we observe, whether $\delta t = {\cal O}(h)$, or $\delta t = {\cal O}(1)$.
%

\begin{figure}[t!]
\centerline{
\includegraphics[scale = 0.375]{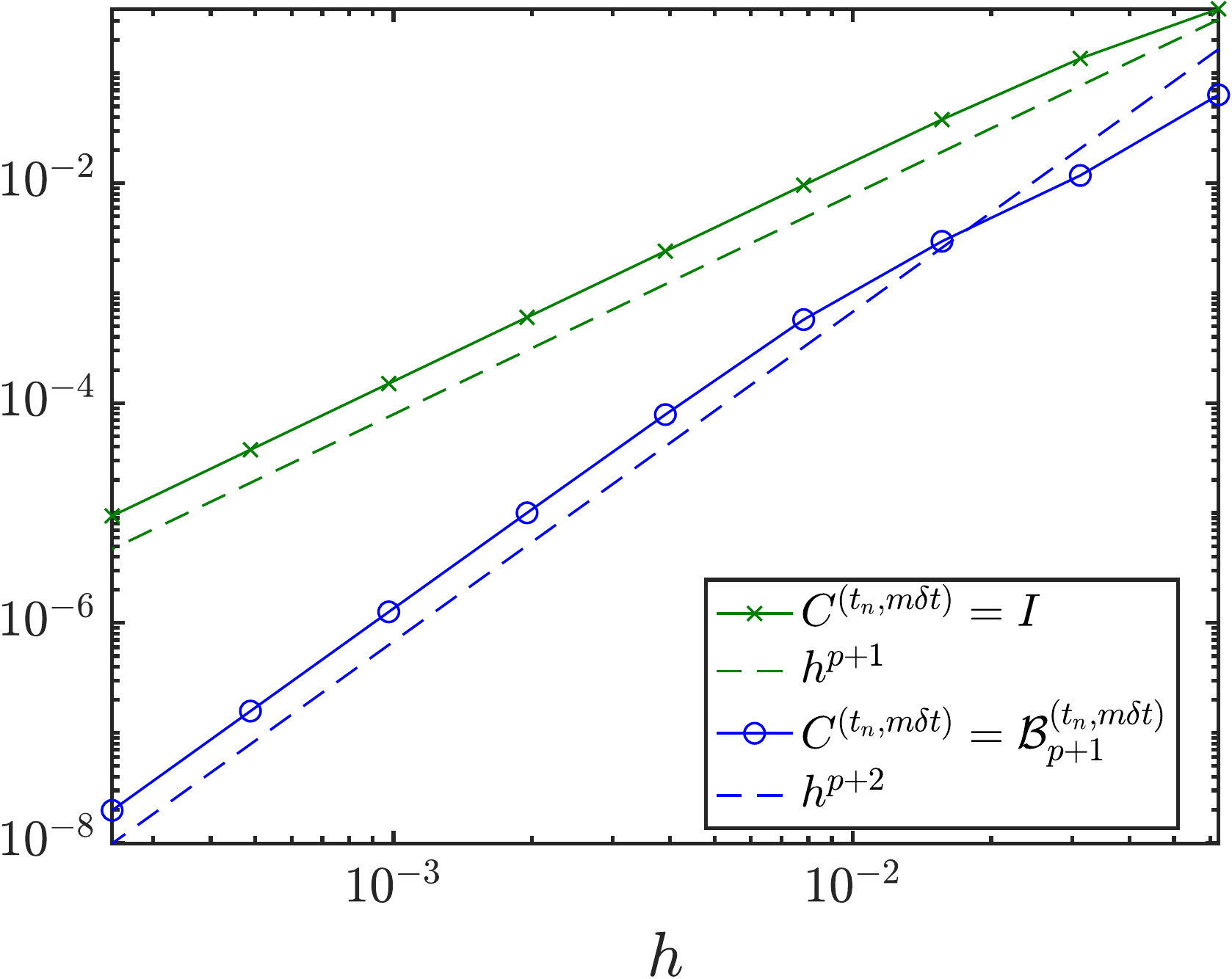}
\quad
\includegraphics[scale = 0.375]{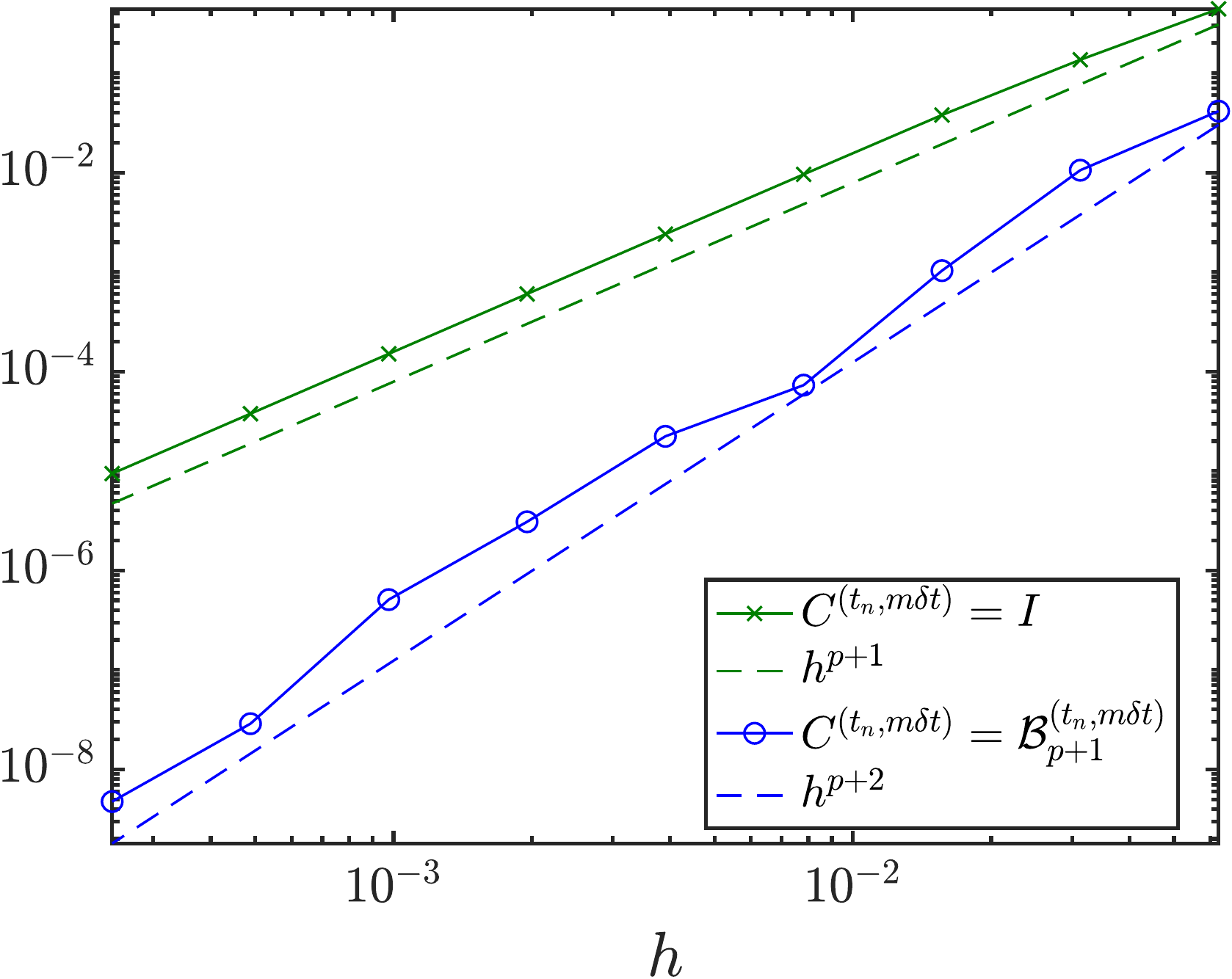}
}
\vspace{2ex}
\centerline{
\includegraphics[scale = 0.375]{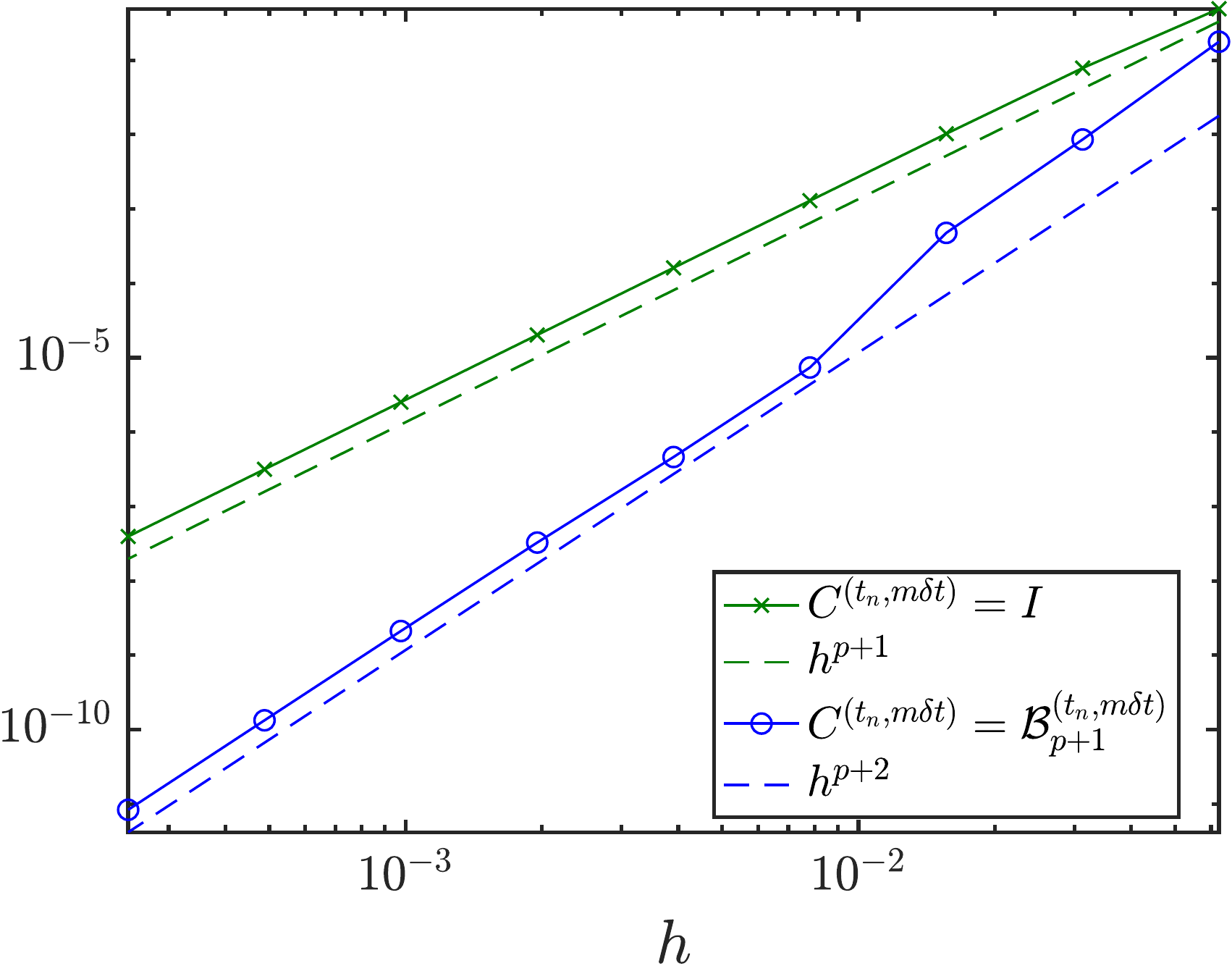}
\quad
\includegraphics[scale = 0.375]{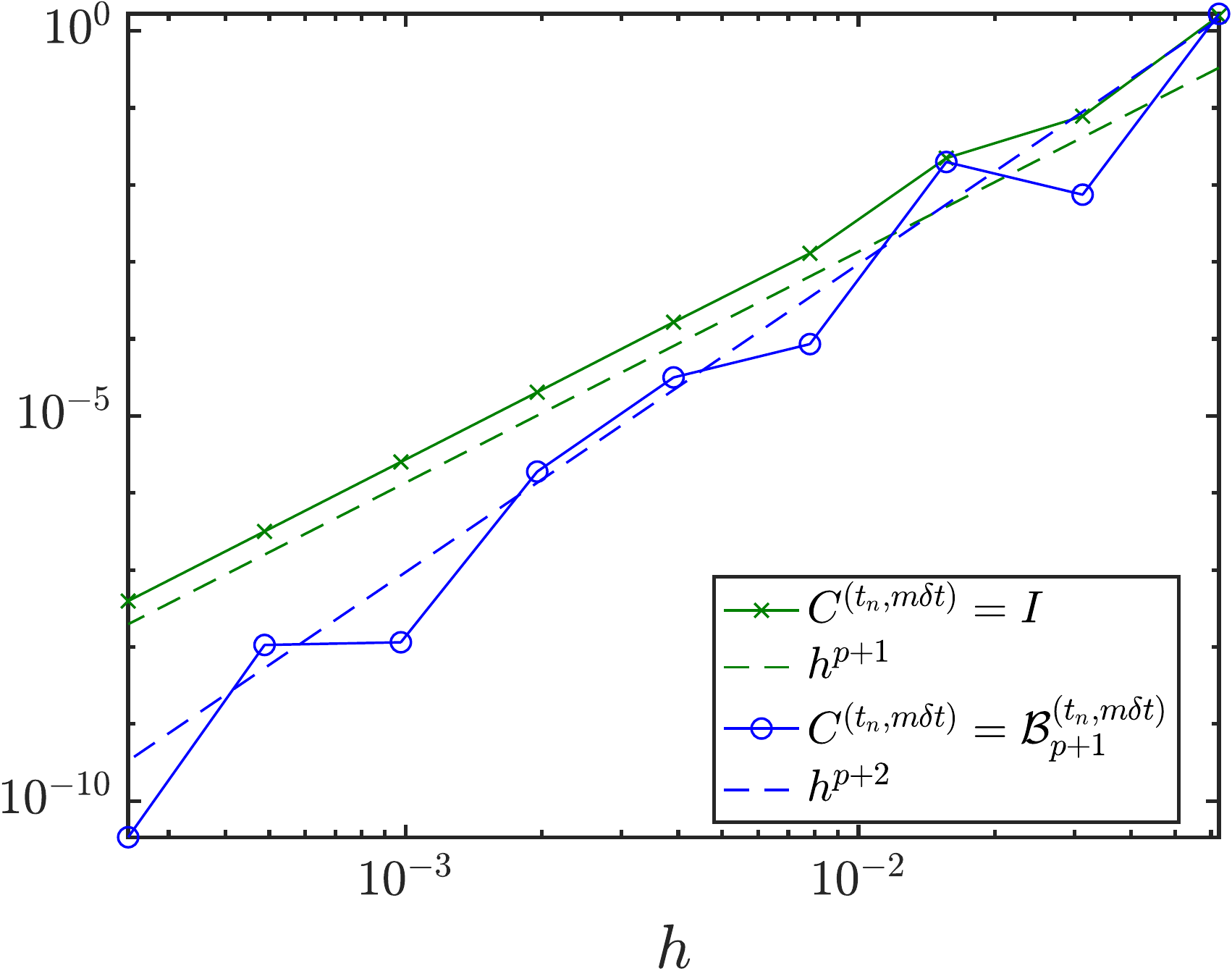}
}
\caption{
Numerical verification of \Cref{lem:SL_ideal_pert}. The quantity \eqref{SM:eq:var_conj_plot_quantity} with spatially independent wave-speed \eqref{SM:eq:alpha2} is plotted under mesh refinement in $h$.
Top: $p = 1$. Bottom $p = 2$. Left: $\delta t = 0.85 h$. Right: $\delta t = 0.85$.
\label{SM:fig:lem_verification}
}
\end{figure}

Now, in \Cref{SM:fig:var_BE_conject} we provide numerical evidence for the claims made in \Cref{rem:space_vary_estimates} regarding whether the spatially independent wave-speed estimates extend to spatially variable wave-speeds.
That is, we test whether the relationships \eqref{eq:SL_ideal_pert1} and \eqref{eq:SL_ideal_pert3} hold when the wave-speed function is given by  \eqref{SM:eq:alpha4}.
Consider when $C^{(t_n, m \delta t)} = {\cal B}_{p+1}^{(t_n, m \delta t)}$, so as verify whether the estimate \eqref{eq:SL_ideal_pert3} holds.
For $p = 1$ (top row of \Cref{SM:fig:var_BE_conject}), we see that \eqref{SM:eq:var_conj_plot_quantity} decays as ${\cal O}(h^{p+2})$ if $\delta t = {\cal O}(h)$ (top left panel), but decays as ${\cal O}(h^{p+1})$ if $\delta t = {\cal O}(1)$ (top right panel). Both of these results are consistent with estimate \eqref{eq:SL_ideal_pert3} holding up to terms of size ${\cal O}(h^{p+1} \delta t)$, as conjectured in \Cref{rem:space_vary_estimates} when $p$ is odd.
However, for the $p = 2$ case (bottom row of \Cref{SM:fig:var_BE_conject}), we see that when $C^{(t_n, m \delta t)} = {\cal B}_{p+1}^{(t_n, m \delta t)}$, the quantity \eqref{SM:eq:var_conj_plot_quantity} only decays as ${\cal O}(h^{p+1})$, even when $\delta t = {\cal O}(h)$. This is why, as stated in \Cref{rem:space_vary_estimates}, we do not believe that all of the estimates generalize to the spatially variable case when $p$ is even.

%
\begin{figure}[t!]
\centerline{
\includegraphics[scale = 0.375]{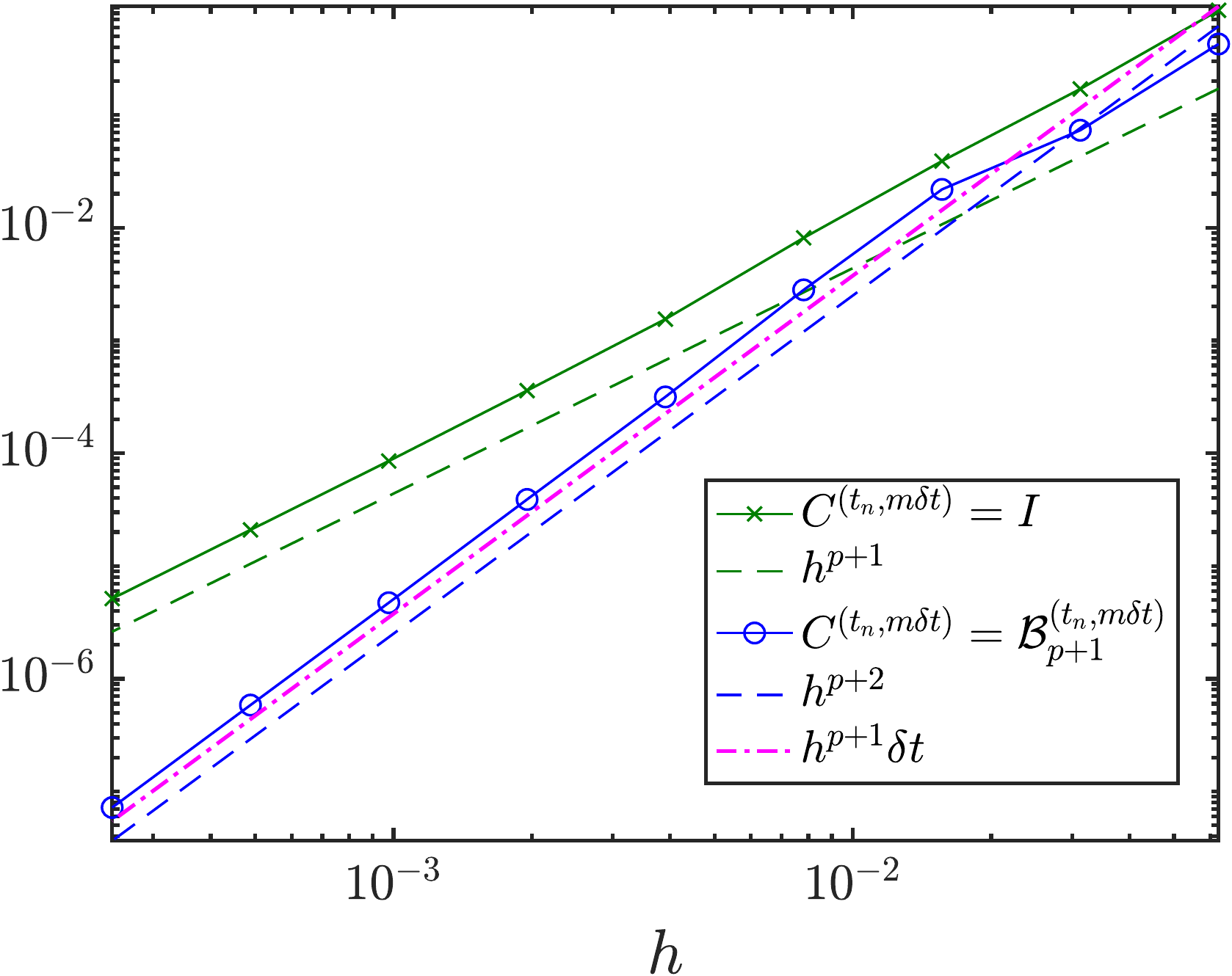}
\quad
\includegraphics[scale = 0.375]{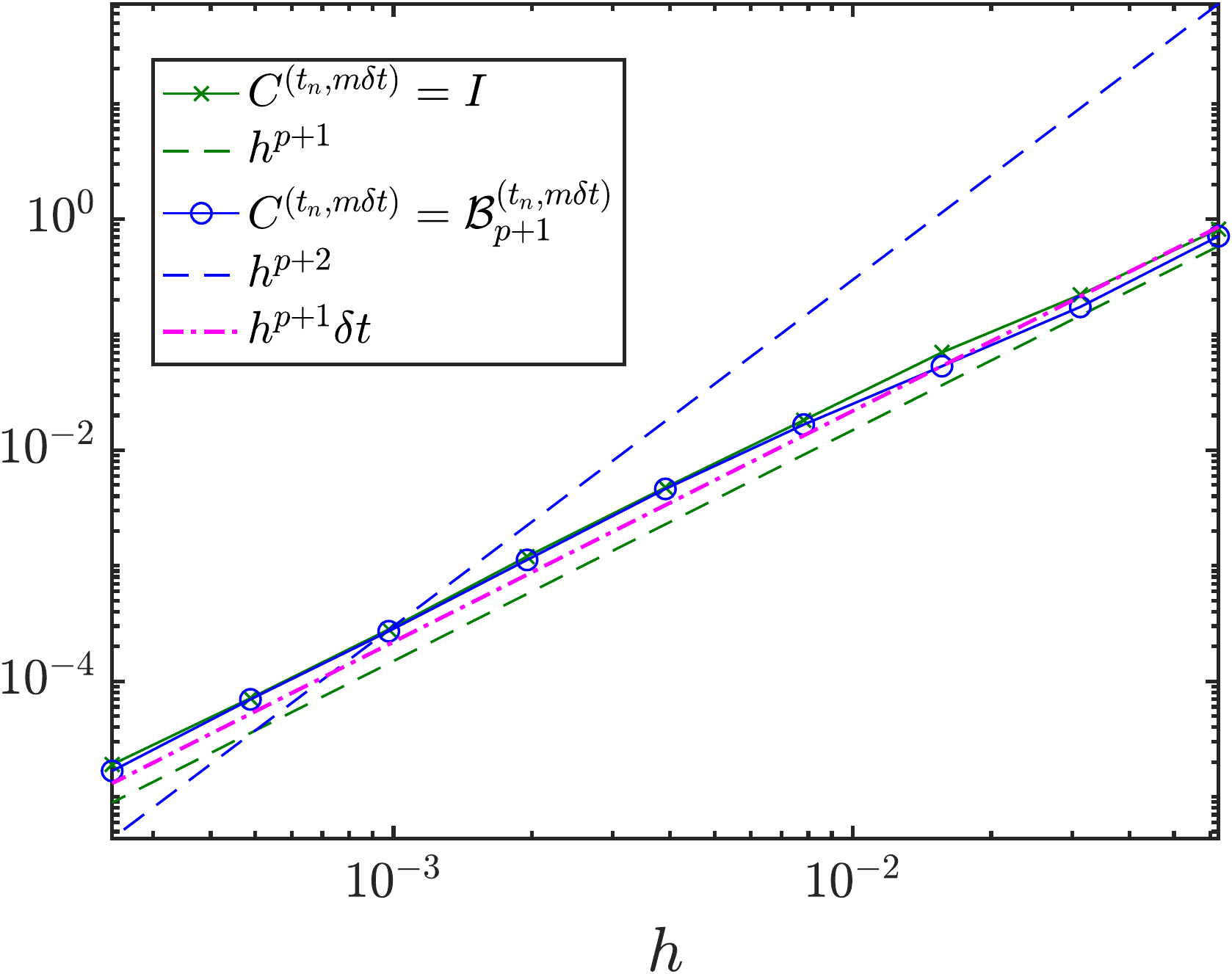}
}
\vspace{2ex}
\centerline{
\includegraphics[scale = 0.375]{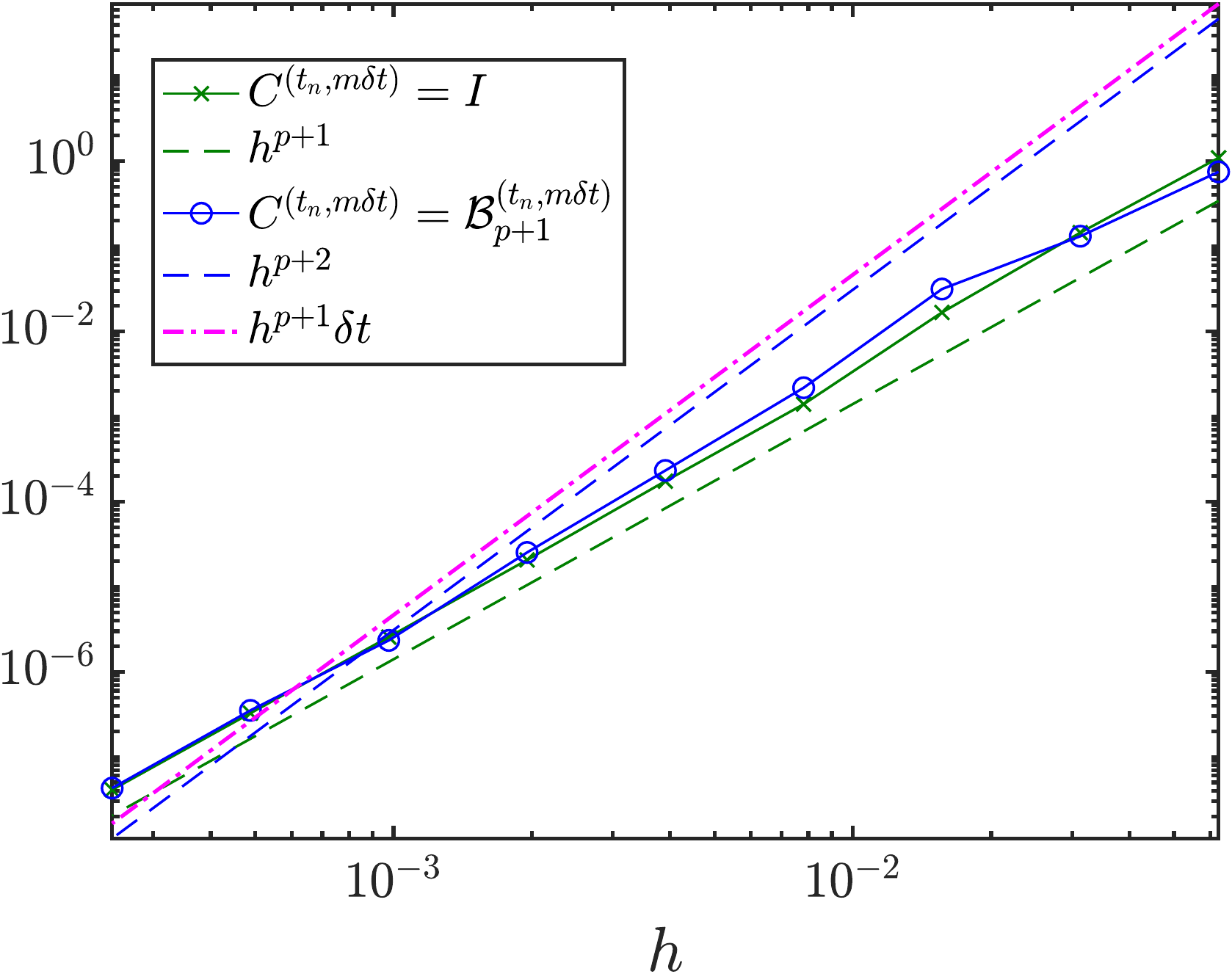}
\quad
\includegraphics[scale = 0.375]{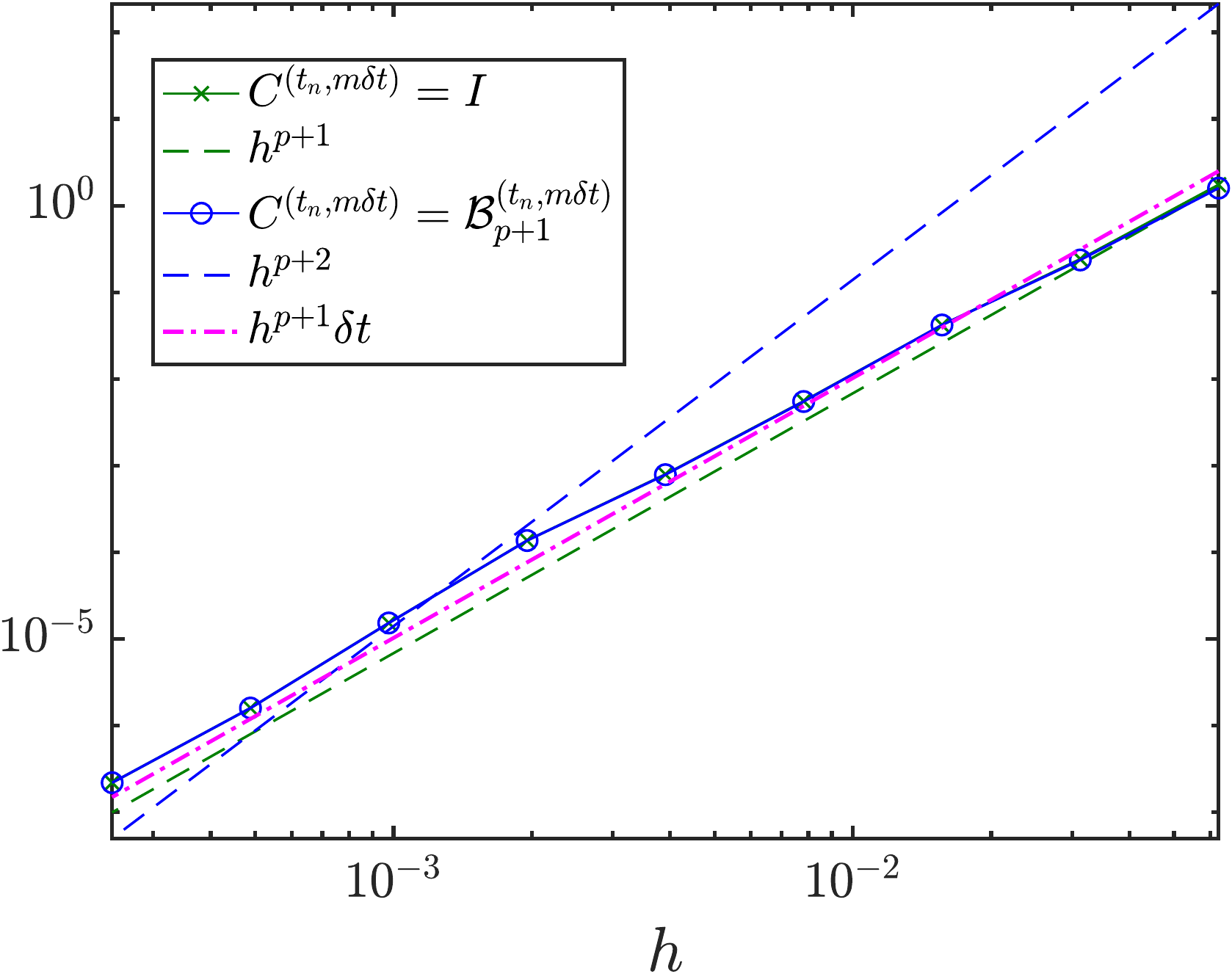}
}
\caption{
Supporting numerical evidence for \Cref{rem:space_vary_estimates}. The quantity \eqref{SM:eq:var_conj_plot_quantity} with spatially variable wave-speed \eqref{SM:eq:alpha4} is plotted under mesh refinement in $h$.
Top: $p = 1$. Bottom $p = 2$. Left: $\delta t = 0.85 h$. Right: $\delta t = 0.85$.
\label{SM:fig:var_BE_conject}
}
\end{figure}

\section{Nonlinearity introduced by GMRES}
\label{SM:sec:GMRES_FAS}

Using GMRES to inexactly solve the coarse-grid linear systems introduces nonlinearity to the problem, since the approximations generated by GMRES depend nonlinearly on the right-hand sides of the linear systems.
Applying GMRES inexactly within the full approximation scheme (FAS) framework is not straightforward (though possible), because it is important to ensure that the same GMRES polynomial is used for both the coarse-grid solve and the construction of the tau-correction term. 
To avoid this complication, we use a linear version of MGRIT, which we have implemented in XBraid (by default, XBraid uses the FAS framework).
This has the added benefit of reducing memory and computation overhead.
Note that an FAS implementation of our algorithm would not produce identical results to the linear implementation used here, since the initial GMRES residuals (and hence the GMRES polynomials) would be different.
The impact of this difference on convergence, if any, is a topic of future work.
%

\section{One-dimensional numerical test problems}
\label{SM:sec:one-dim-num-tests}

In this Section, plots of the wave-speed functions used in the numerical tests for the one-dimensional advection problem are shown. In addition, plots of the associated PDE solutions are shown.
\Cref{SM:fig:wave_and_sol_1D_alpha2} presents the case in which the wave-speed function depends on time only.
\Cref{SM:fig:wave_and_sol_1D_alpha4} presents the case in which the wave-speed function depends on both space and time.

\begin{figure}[h!]
\centerline{
\begin{tikzpicture}
  		\node at (0,0) {\includegraphics[scale = 0.35]{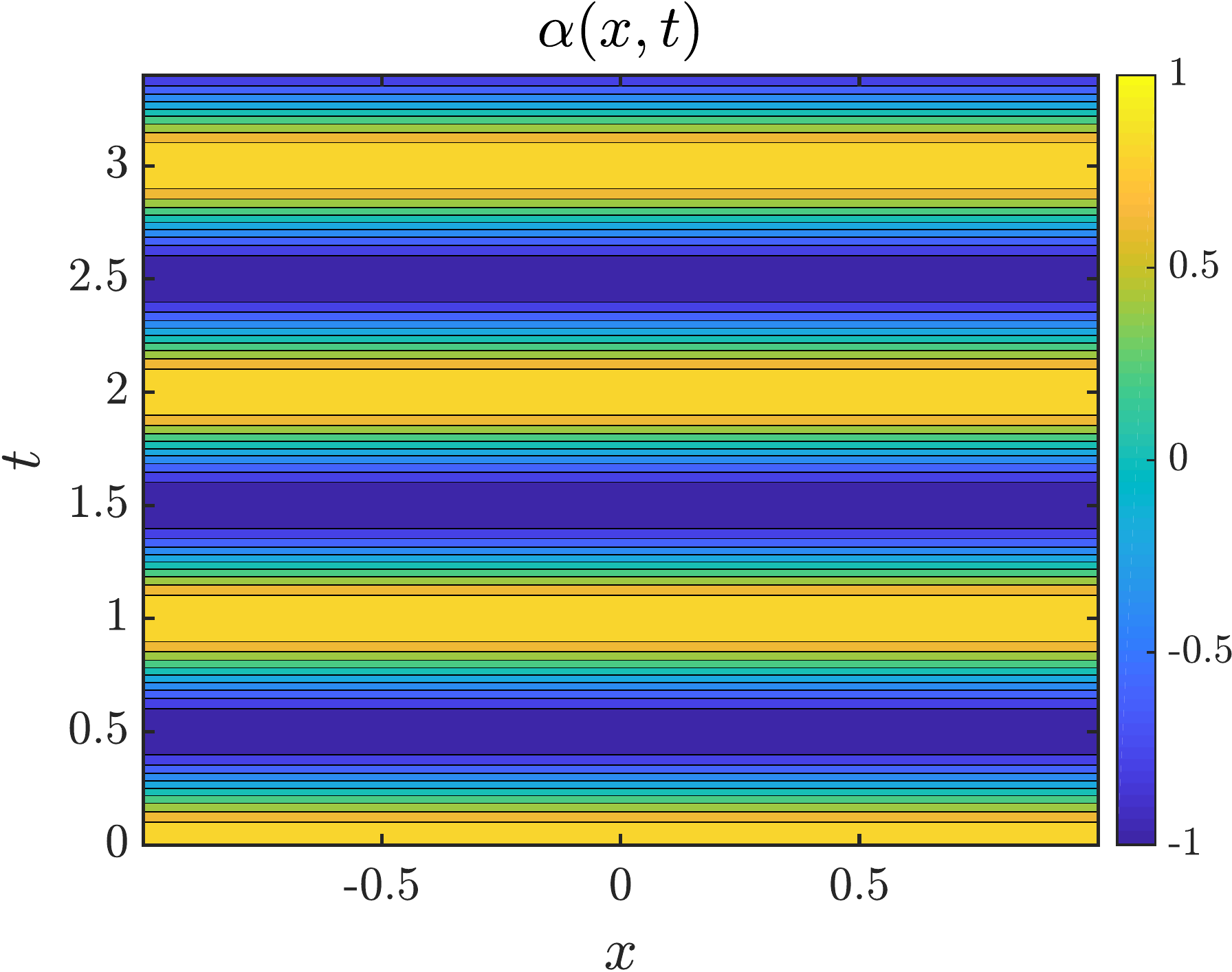}};
\end{tikzpicture}
\begin{tikzpicture}
  		\node at (0,0) {\includegraphics[scale = 0.35]{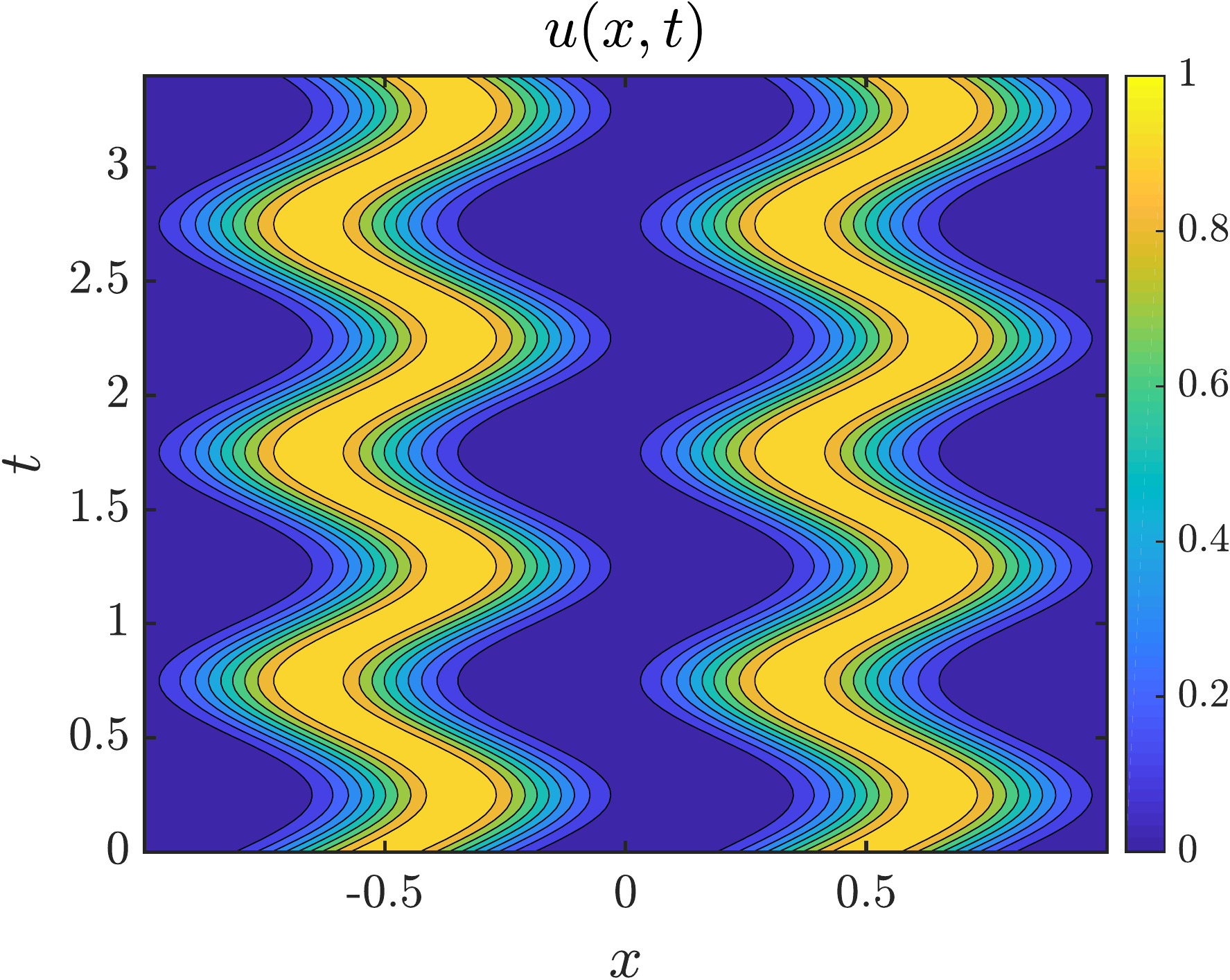}};
\end{tikzpicture}
}
\caption{
Left: Wave-speed function $\alpha(x,t) = \cos(2 \pi t)$, as given in \eqref{eq:alpha2}.
Right: The corresponding space-time solution of the one-dimensional advection problem \eqref{eq:ad}.
Note that the wave-speed function and PDE solution are shown on the truncated time domain $t \in [0, 3.4]$ rather than the full domain $t \in [0, 13.6]$ used in the numerical tests since they are easier to visualize on this shorter domain.
\label{SM:fig:wave_and_sol_1D_alpha2}
}
\end{figure}

\begin{figure}[H]
\centerline{
\begin{tikzpicture}
  		\node at (0,0) {\includegraphics[scale = 0.35]{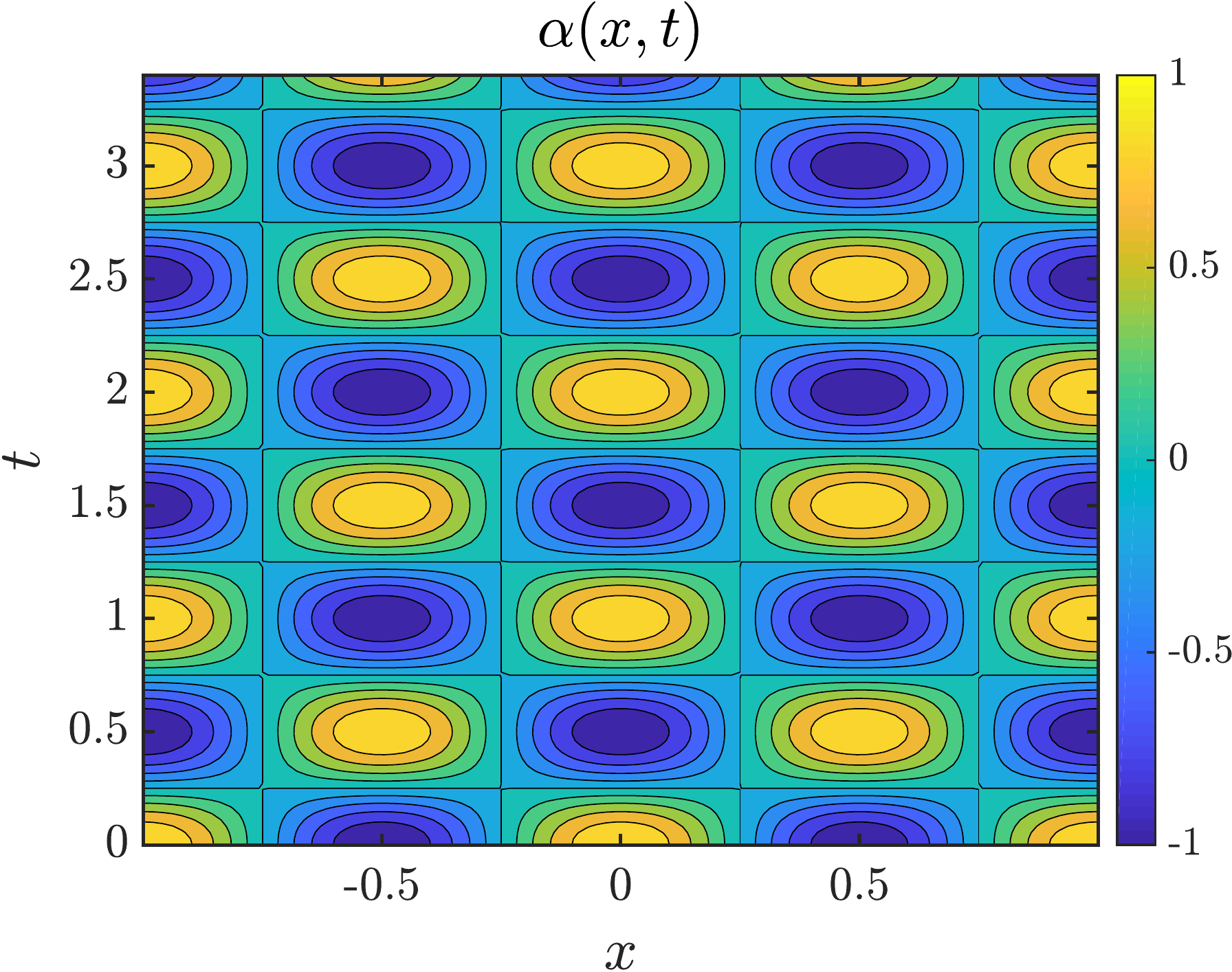}};
\end{tikzpicture}
\begin{tikzpicture}
  		\node at (0,0) {\includegraphics[scale = 0.35]{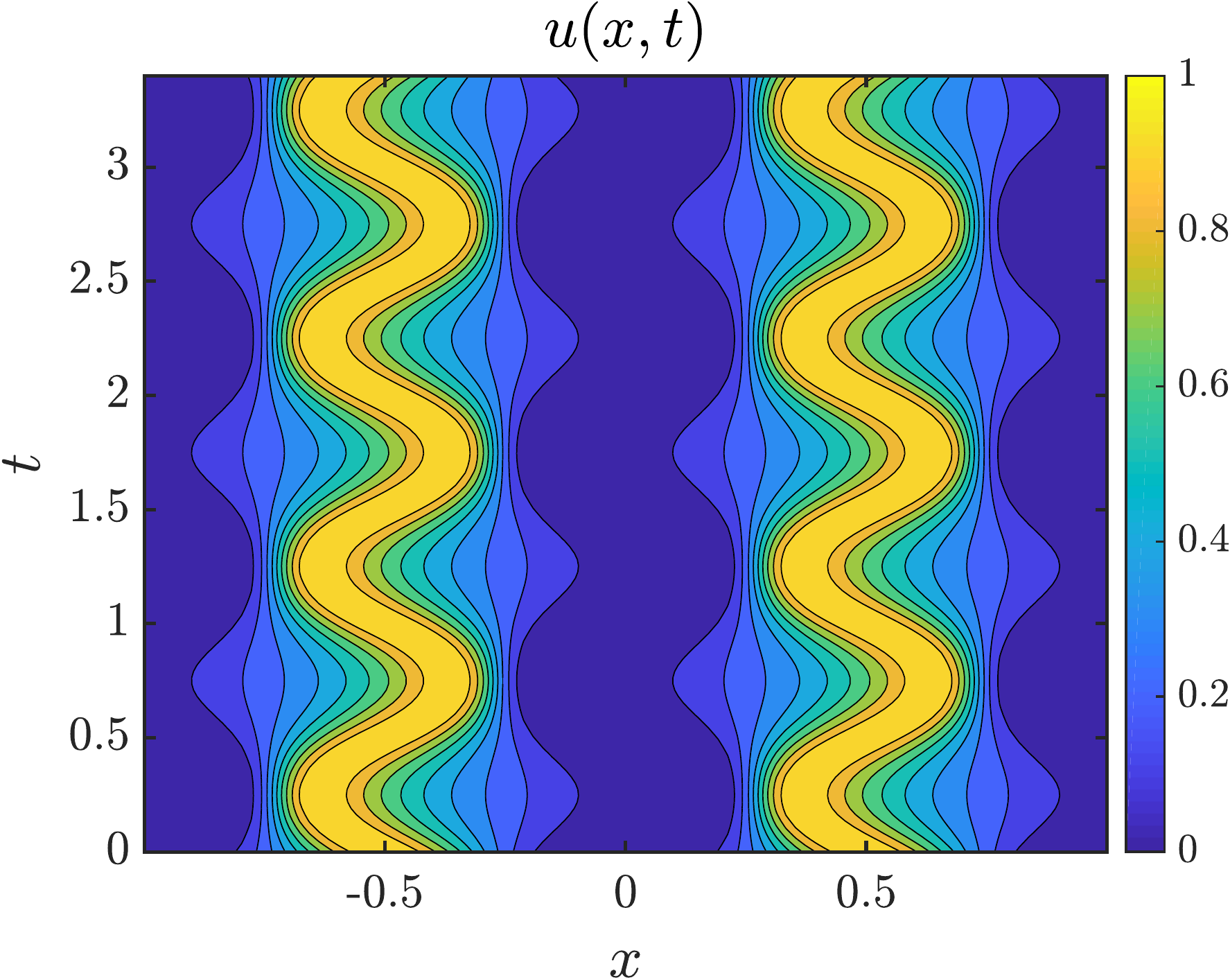}};
\end{tikzpicture}
}
\caption{
Left: Wave-speed function $\alpha(x,t) = \cos(2 \pi t)  \cos (2 \pi x)$, as given in \eqref{eq:alpha4}.
Right: The corresponding space-time solution of the one-dimensional advection problem \eqref{eq:ad}.
Note that the wave-speed function and PDE solution are shown on the truncated time domain $t \in [0, 3.4]$ rather than the full domain $t \in [0, 13.6]$ used in the numerical tests since they are easier to visualize on this shorter domain.
\label{SM:fig:wave_and_sol_1D_alpha4}
}
\end{figure}

\clearpage
\section{One-dimensional strong-scaling tests solving to discretization accuracy}
\label{SM:sec:strong-scaling}

In the left panel of \Cref{SM:fig:strong_scaling} we reproduce the strong-scaling plot from the left panel of \Cref{fig:strong_scaling} for the one-dimensional advection problem. As throughout the rest of the paper, these tests iterate MGRIT until the $\ell^2$-norm of the space-time residual is reduced by at least 10 orders of magnitude from its initial value.
This required 13, 19, and 24 MGRIT iterations for $(p,r) = ((1,1), (3,3), (5,5))$, respectively.
In the right panel of \Cref{SM:fig:strong_scaling} we show strong-scaling results corresponding to iterating MGRIT until the solution at the final time point reaches discretization error accuracy in the discrete $\ell^2$-norm. 
The exact PDE solution needed to measure the discretization error is given in \cite[App. B.4]{KrzysikThesis2021}.
For $(p,r) = ((1,1), (3,3), (5,5))$, the discretization error at the final time is approximately $0.75, 1.0\times 10^{-5}$, and $8.6 \times 10^{-10}$, respectively, and requires 5, 12, and 25 MGRIT iterations to reach this.
Since solving to discretization error accuracy requires fewer iterations than reducing the residual norm by 10 orders of magnitude for the 1st- and 3rd-order discretizations (i.e., reducing the residual norm by 10 order of magnitude \textit{over solves} these problems), the speed-ups are slightly better.

%
%
%
\begin{figure}[h!]
\centerline{
\includegraphics[scale=0.345]{figures/strong_scaling_tol-1e-10}
\quad
\includegraphics[scale=0.345]{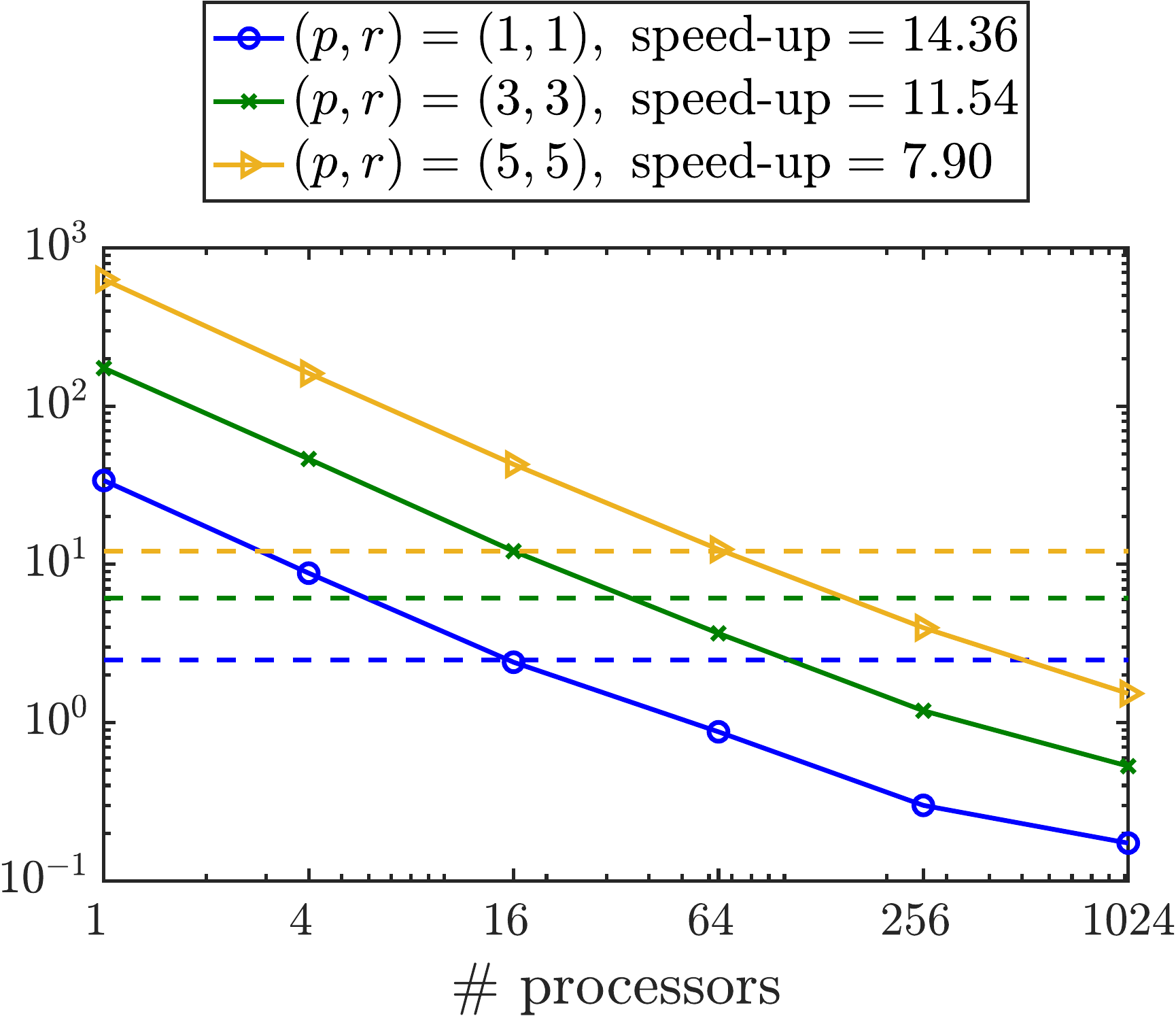}
}
\caption{Strong parallel scaling using time-only parallelism: Runtimes of MGRIT V-cycles with an aggressive coarsening factor of $m= 16$ on the first level followed by $m = 4$ on all coarser levels.
The one-dimensional PDE \eqref{eq:ad} uses wave-speed \eqref{eq:alpha4}, and the semi-Lagrangian discretizations of orders $(p,r) = ((1,1), (3,3), (5,5))$ use a space-time mesh having $n_x \times n_t = 2^{12} \times 2^{14}$ points.
Left: Fixed space-time residual stopping tolerance of $10^{-10}$. Right: Residual stopping tolerance based on reaching discretization error at the final time point.
Dashed lines represent runtimes of time-stepping on one processor.
Speed-ups obtained using 1024 processors are listed in the legends.
\label{SM:fig:strong_scaling}
}
\end{figure}


\clearpage
\section{Two-dimensional numerical test problem}
\label{SM:sec:two-dim-num-tests}

Plots of the velocity field associated with the variable-wave-speed function \eqref{eq:alpha-var_2D} are show in \Cref{SM:fig:velocity_field_2D} for several different times.
In addition, snapshots of the associated PDE solution are shown for several different times in \Cref{SM:fig:sol_evolution_2D}.

\begin{figure}[h!]
\centerline{
\includegraphics[scale = 0.35]{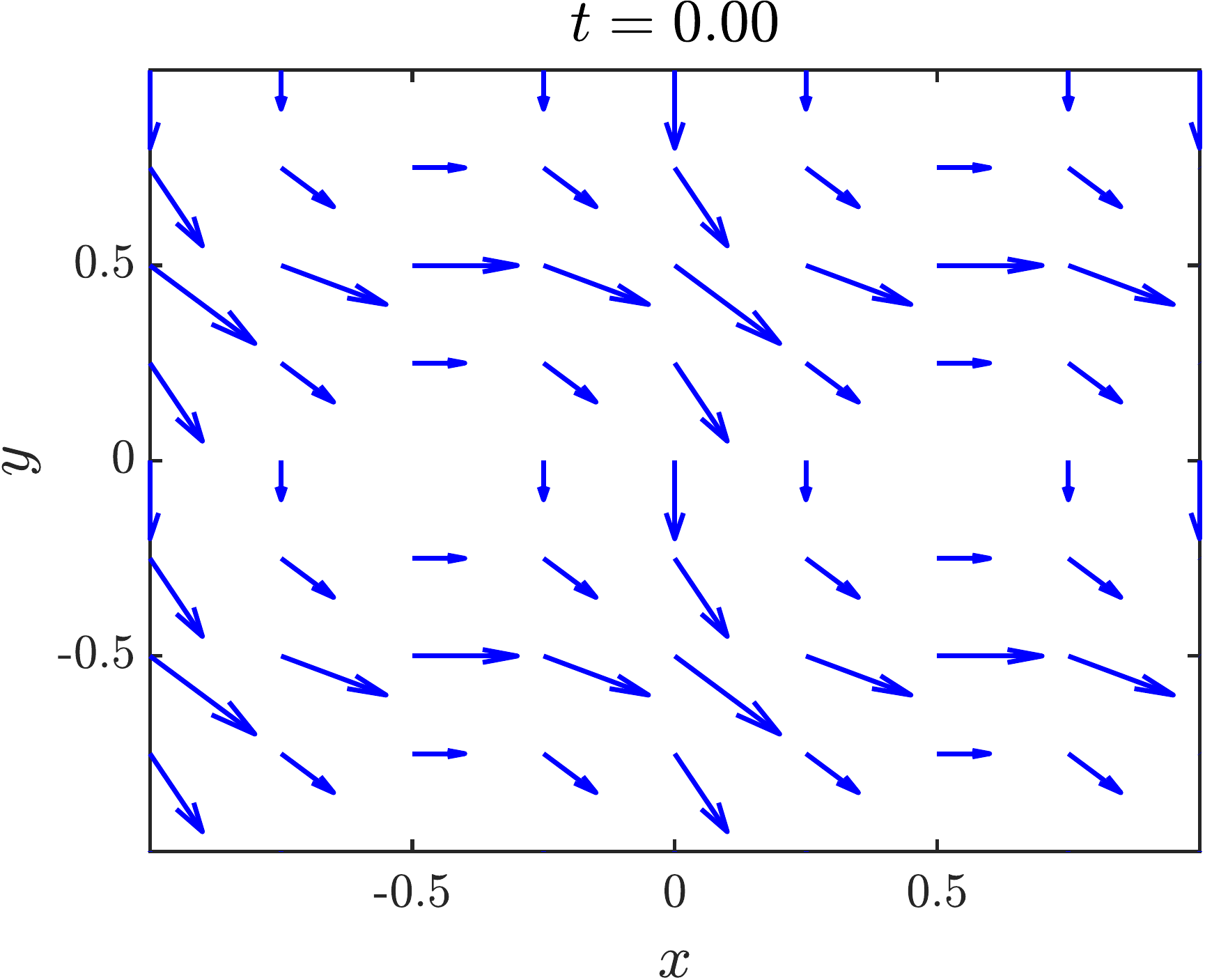}
\;
\includegraphics[scale = 0.35]{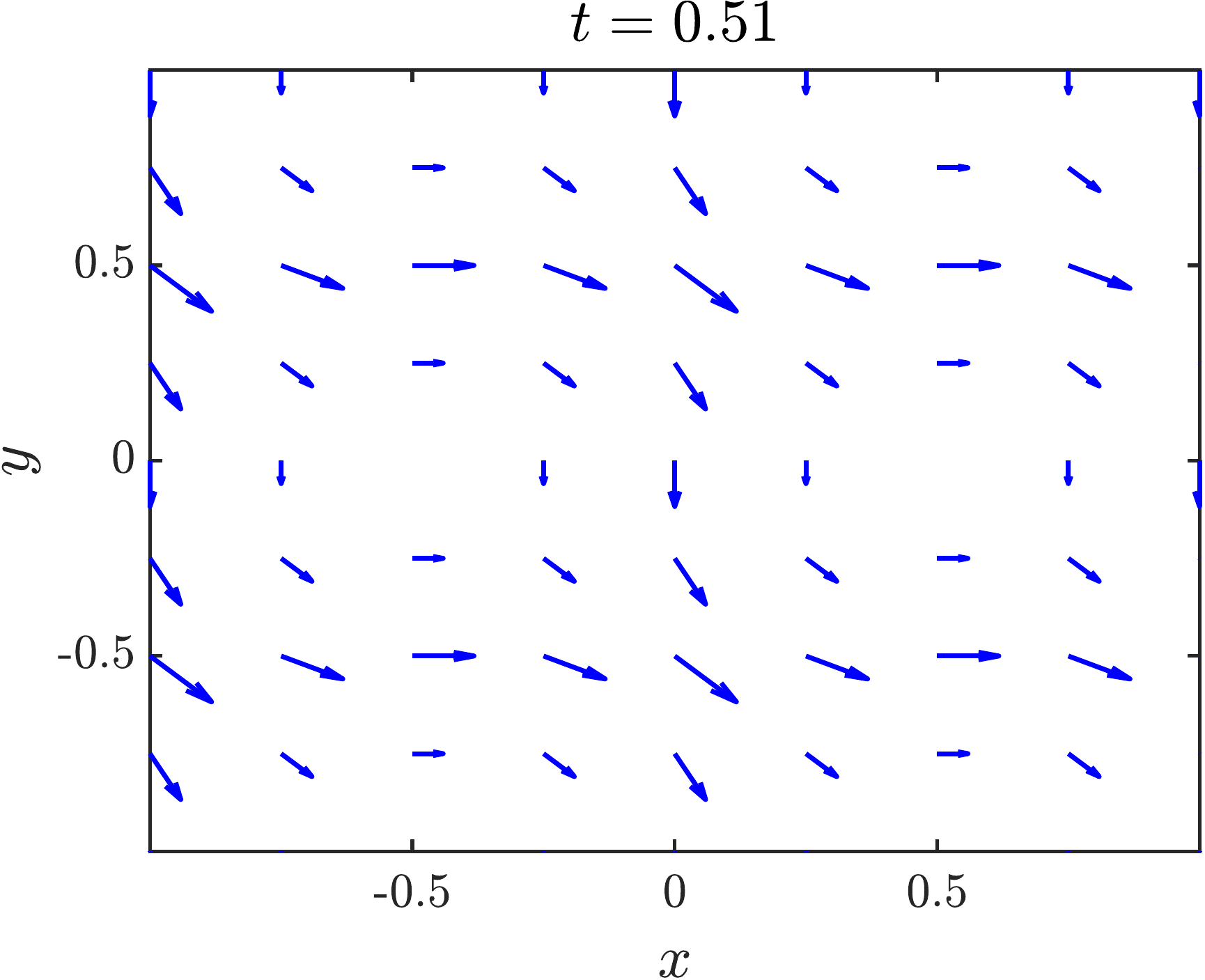}
}
\vspace{2ex}
\centerline{
\includegraphics[scale = 0.35]{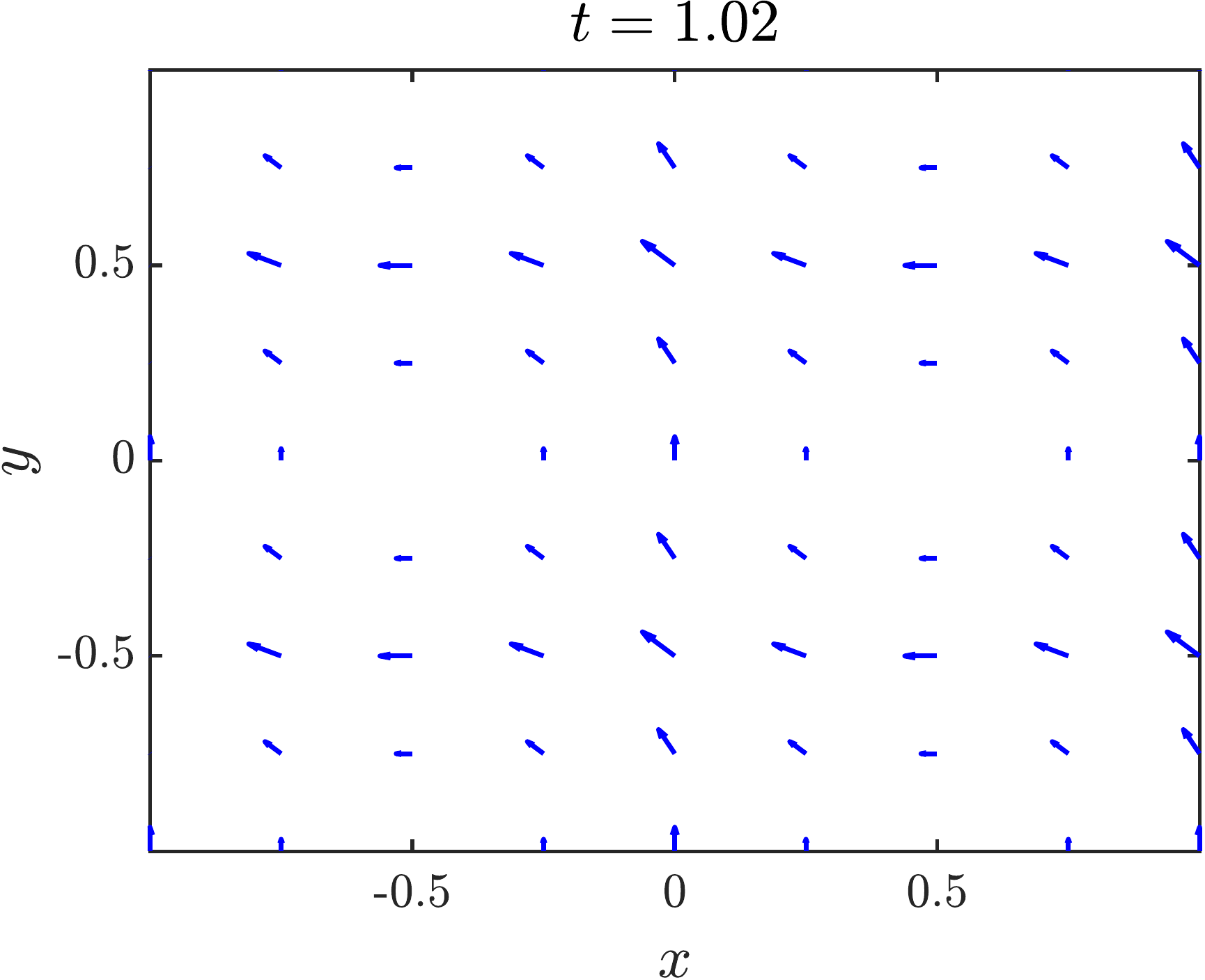}
\;
\includegraphics[scale = 0.35]{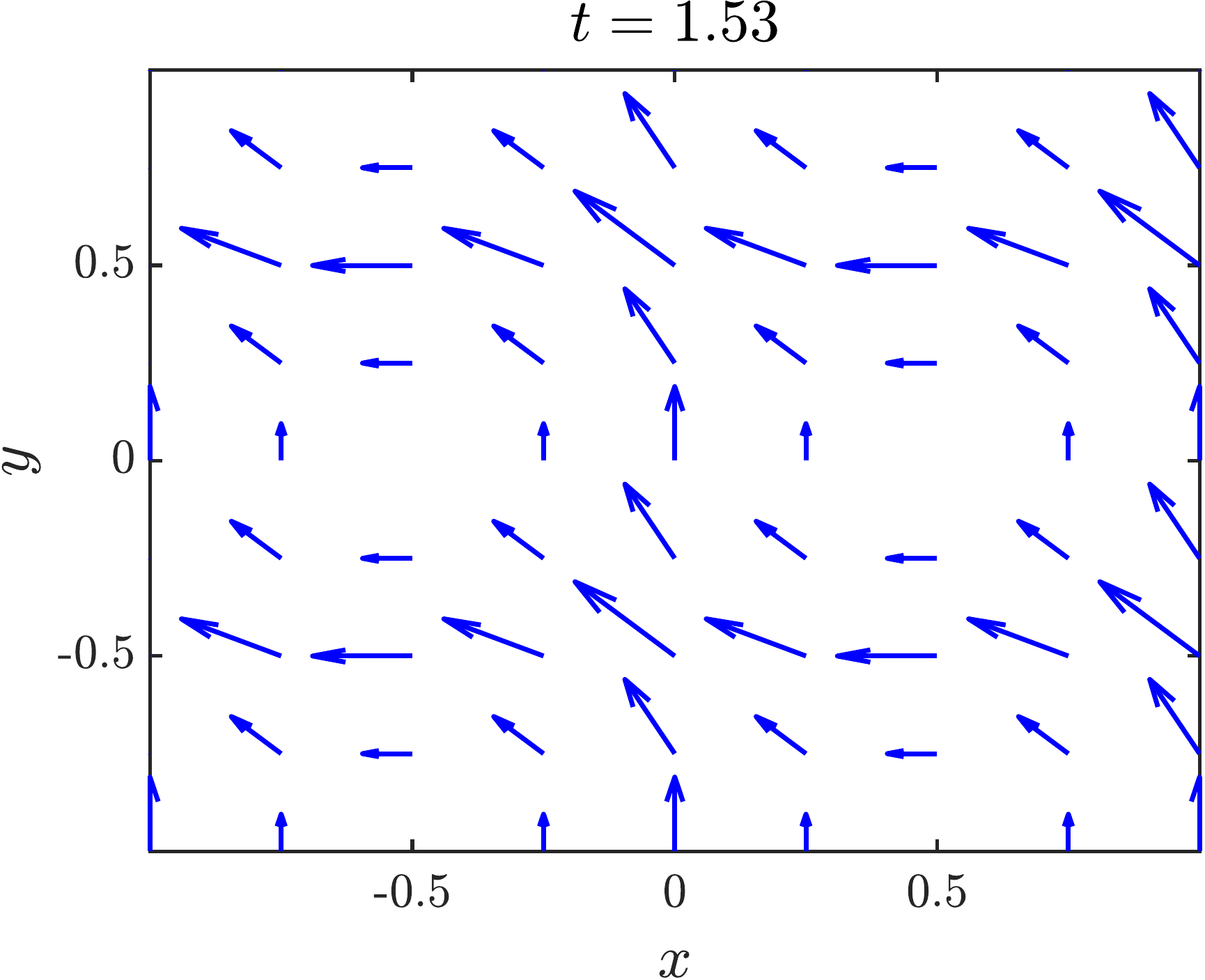}
}
\caption{
Plotted at four different times $t = (0, 0.51, 1.02, 1.53)$ is the velocity field $\big(\alpha(x,y,t),  \beta(x,y,t)\big)$ of the two-dimensional advection problem \eqref{eq:ad_2D} for wave-speed given by \eqref{eq:alpha-var_2D}.
The four times $t = (0, 0.51, 1.02, 1.53)$ represent 0\%, 15\%, 30\%, and 45\% of one period of the time-periodic velocity field, which has a period of 3.4.
\label{SM:fig:velocity_field_2D}
}
\end{figure}

\begin{figure}[t!]
\centerline{
\includegraphics[scale = 0.4]{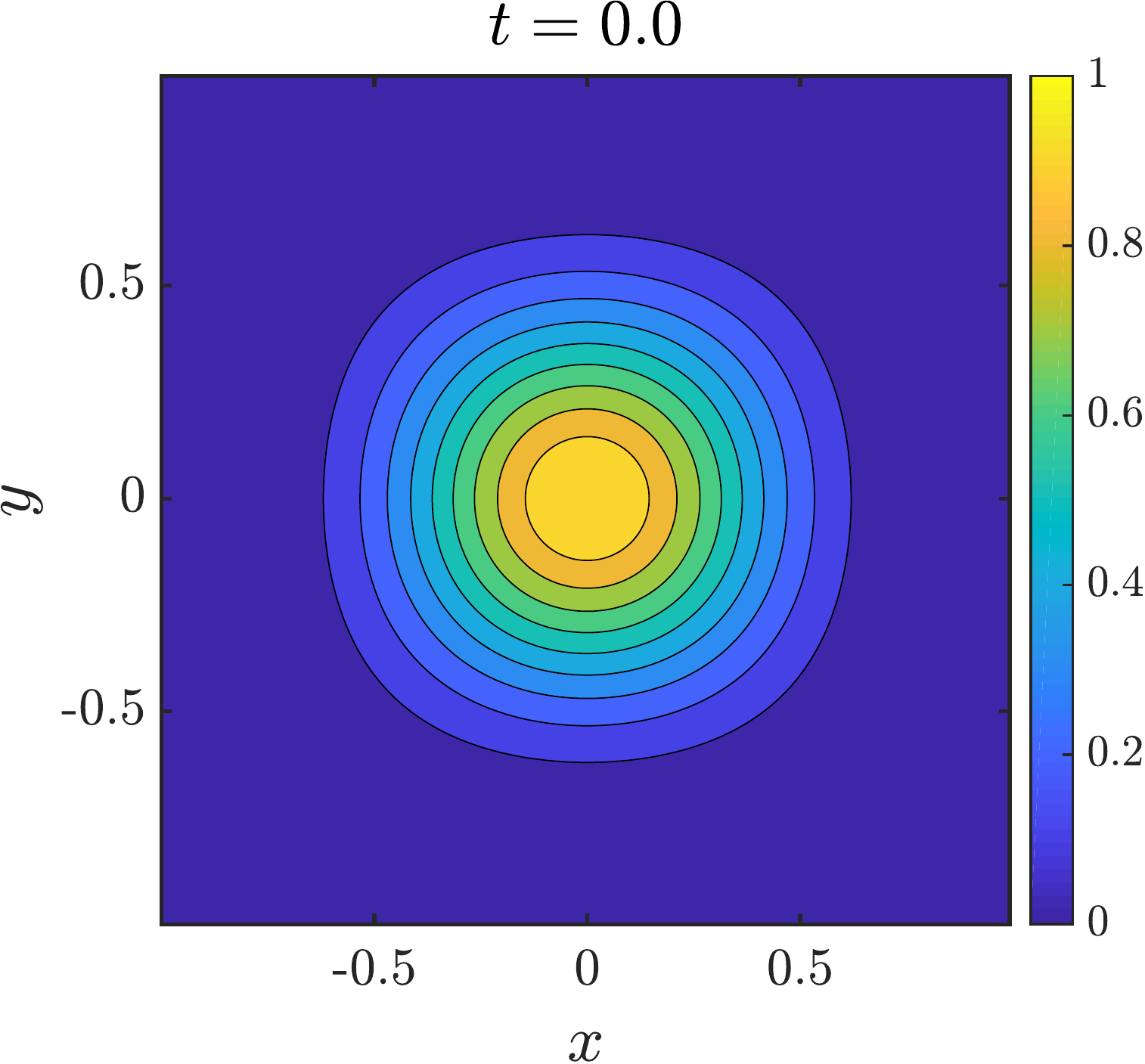}
\quad
\includegraphics[scale = 0.4]{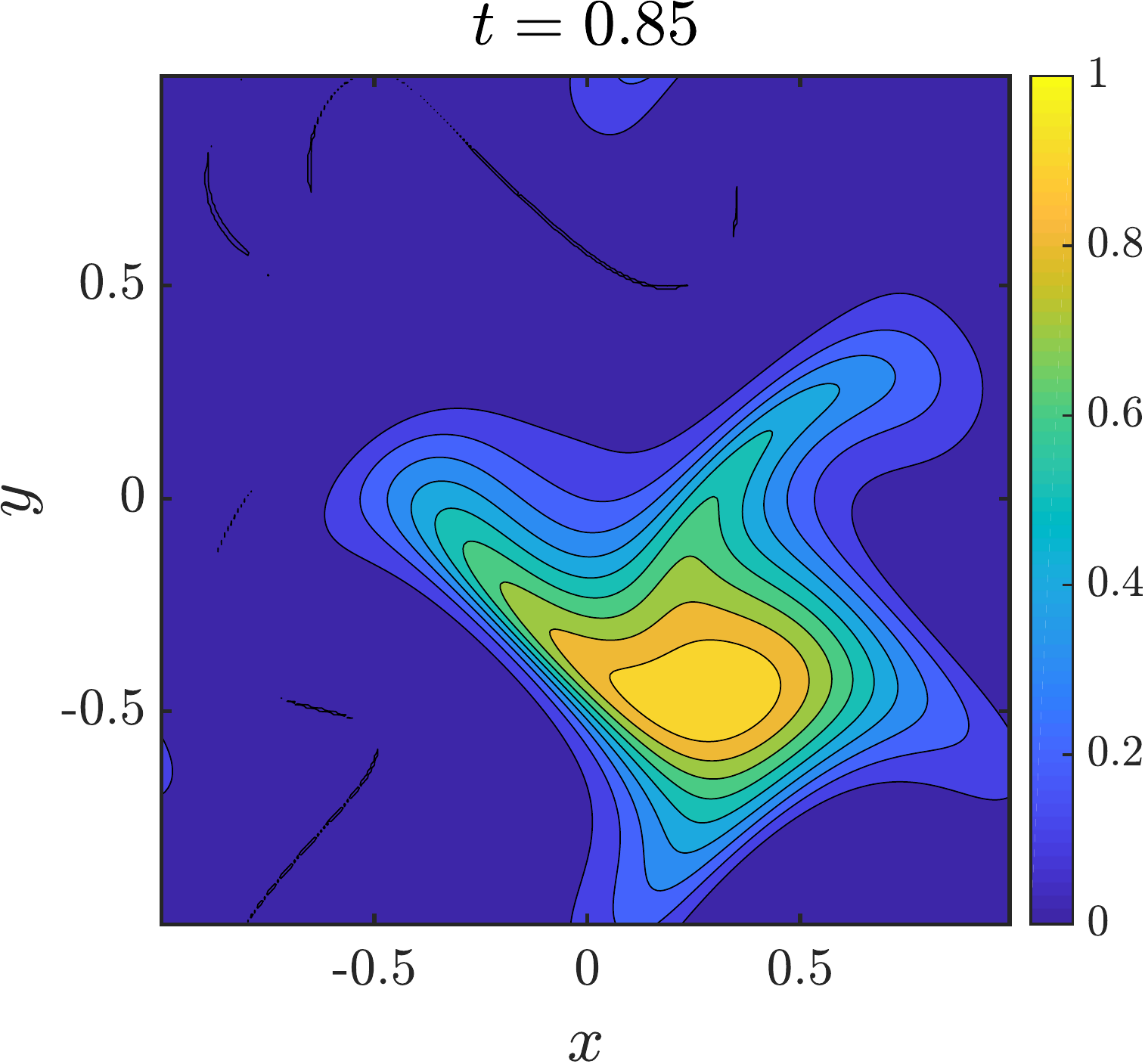}
}
\vspace{2ex}
\centerline{
\includegraphics[scale = 0.4]{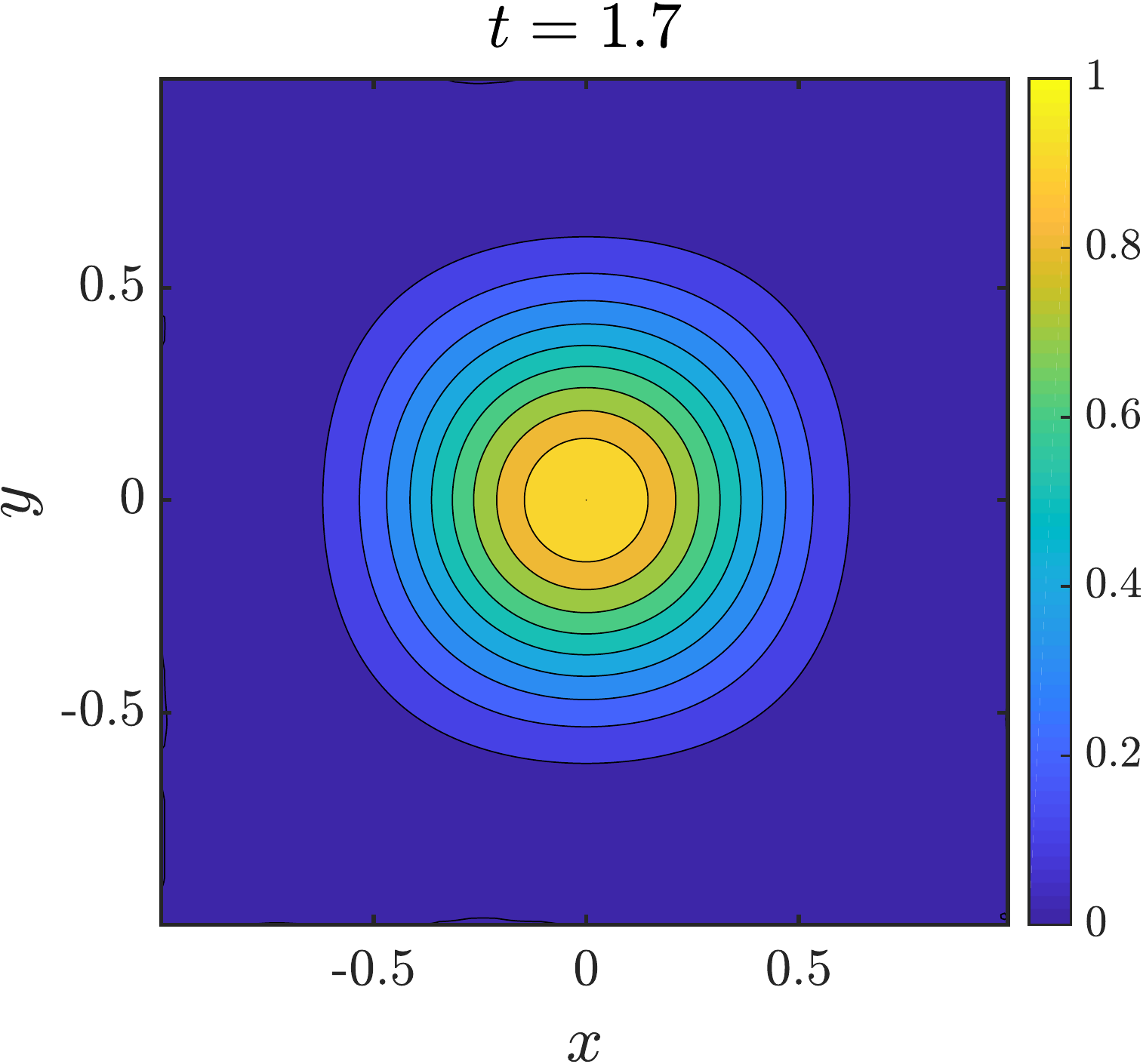}
\quad
\includegraphics[scale = 0.4]{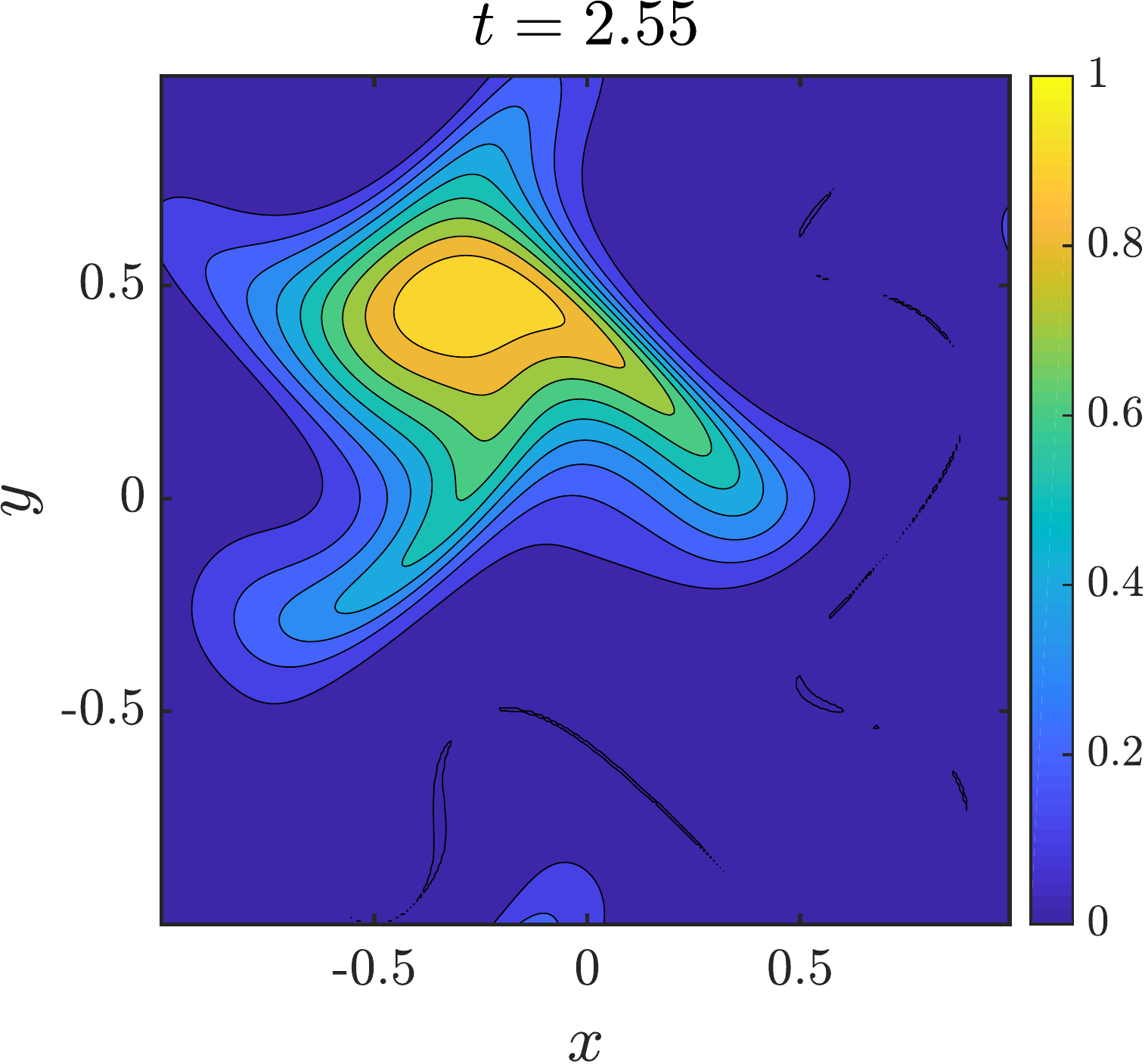}
}
\caption{
One period of the time-periodic solution of the two-dimensional advection problem \eqref{eq:ad_2D} with wave-speed given by \eqref{eq:alpha-var_2D}. 
Moving left to right, top to bottom, the solution is shown at the times $t = (0, 0.85, 1.70, 2.55, 3.4)$, representing 0\%, 25\%, 50\%, and 75\% of one period, respectively.
Note: These snapshots are taken from a simulation that used a 5th-order discretization with ${n_x^2 = 256^2}$ points in space.
\label{SM:fig:sol_evolution_2D}
}
\end{figure}

\newpage
\section{Estimating coarse-grid departure points in two dimensions via backtracking and linear interpolation}
\label{SM:sec:depart_point_estimation}

In this section, we propose a strategy for approximating coarse-grid departure points in two dimensions by a backtracking and linear interpolation procedure, generalizing the one-dimensional procedure proposed in \Cref{sec:backtracking}.
In \Cref{SM:sec:backtrack_poly} we first present the bilinear interpolating polynomial that is used in this procedure, then \Cref{SM:sec:backtrack_derivation} presents the details of procedure.

\subsection{A two-dimensional interpolating polynomial}
\label{SM:sec:backtrack_poly}

In one spatial dimension, define the linear interpolating polynomial $p^{\rm 1d}(x; q_{\rm E}, q_{\rm W})$ as that which interpolates the two values $q_{\rm E}$ and $q_{\rm W}$ associated with the two mesh points $x =x_{\rm W}$, and $x= x_{\rm E}$, respectively, with $x_{\rm E} = x_{\rm W} + h$. 
For example, $q_{\rm E}$ and $q_{\rm W}$ could represent the evaluation of some function $q(x)$ at $x= x_{\rm E}$ and $x = x_{\rm W}$.
This interpolating polynomial can be written as
\begin{align} \label{SM:eq:poly_interp_1d}
p^{\rm 1d}(x; q_{\rm E}, q_{\rm W}) = \frac{q_{\rm E} - q_{\rm W}}{h}(x - x_{\rm E}) + q_{\rm E}, \quad x \in [x_{\rm W}, x_{\rm E}].
\end{align}

Using the one-dimensional polynomial \eqref{SM:eq:poly_interp_1d} we now construct a two-dimensional interpolating polynomial. 
Let $p^{\rm 2d}(x,y; r_{\rm NE}, r_{\rm NW}, r_{\rm SE}, r_{\rm SW})$ denote the bilinear function that interpolates the four values $r_{\rm NE}, r_{\rm NW}, r_{\rm SE}, r_{\rm SW}$ that are associated with the four mesh points
$(x,y) = \big( (x_{\rm E}, y_{\rm N}), (x_{\rm W}, y_{\rm N}), (x_{\rm E}, y_{\rm S}), (x_{\rm W}, y_{\rm S}) \big)$, respectively, with $y_{\rm N} = x_{\rm S} + h$.
In terms of \eqref{SM:eq:poly_interp_1d}, this function can be written as
\begin{align} \label{SM:eq:poly_interp_2d}
\begin{split}
&p^{\rm 2d}(x,y; r_{\rm NE}, r_{\rm NW}, r_{\rm SE}, r_{\rm SW})
\\
&\quad=\frac{
p^{\rm 1d}(x; r_{\rm NE}, r_{\rm NW})
-
p^{\rm 1d}(x; r_{\rm SE}, r_{\rm SW})
}{
h
}(y - y_{\rm N}) 
+ 
p^{\rm 1d}(x; r_{\rm NE}, r_{\rm NW}),
\end{split}
\end{align}
with $(x, y) \in [x_{\rm W} , x_{\rm E}] \times [y_{\rm S}, y_{\rm N}]$.
%

\subsection{Coarse-grid departure point estimation}
\label{SM:sec:backtrack_derivation}

To begin, it is helpful to recall the notation used in \Cref{sec:2D} to describe a local characteristic. 
Recall that the characteristic 
\begin{align}
(x(t), y(t)) = \big(\xi_{ij}^{(t_n, \delta t)}(t), \eta_{ij}^{(t_n, \delta t)}(t) \big),
\end{align}
is defined over the interval $t \in [t_n, t_n \delta t]$. The \textit{arrival} point of this characteristic is $\big(\xi_{ij}^{(t_n, \delta t)}(t_n + \delta t), \eta_{ij}^{(t_n, \delta t)}(t_n + \delta t) \big) = (x_i, y_j)$, and the departure point is its location at time $t = t_n$.

As in \Cref{sec:backtracking}, for notational simplicity, suppose that we are working on the first coarse-grid time interval, $t \in [0, m \delta t]$, but note that the following strategy can be applied on any coarse-grid time interval $t \in [t_n, t_n + m \delta t]$.
Consider the local coarse-grid characteristic
\begin{align} \label{SM:eq:coarse_char_concern_2D}
(x(t), y(t)) = \big(\xi_{ij}^{(0, m \delta t)}(t), \eta_{ij}^{(0, m \delta t)}(t) \big)
\end{align}
that arrives at the mesh point $(x,y) = (x_i, y_j)$ at time $t = m \delta t$.
Our objective is to estimate the departure point of this characteristic: $\big(\xi_{ij}^{(0, m \delta t)}(0), \eta_{ij}^{(0, m \delta t)}(0) \big)$.

Generalizing our one-dimensional strategy from \Cref{sec:backtracking}, we propose to estimate this departure point by approximately tracking the path of the characteristic \eqref{SM:eq:coarse_char_concern_2D} backwards from $t = m \delta t \to t= 0$. 
More specifically, we do so by using the local fine-grid characteristic directions that have already been computed on fine-grid subintervals $t \in [k \delta t, (k+1) \delta t], k = m-1, \ldots, 0$ to guide its path in an interpolating manner.
Let our approximation to the coarse-grid characteristic at time $t = k \delta t$ be denoted by
\begin{align}
\big(
c^{(k)}_{ij}, d^{(k)}_{ij}
\big) 
\approx 
\big(\xi_{ij}^{(0, m \delta t)}(k \delta t), \eta_{ij}^{(0, m \delta t)}(k \delta t) \big).
\end{align}
The approximate departure point is therefore denoted by $\big( c^{(0)}_{ij}, d^{(0)}_{ij} \big)$.

Since \eqref{SM:eq:coarse_char_concern_2D} arrives at $(x, y) = (x_i, y_j)$ at time $t = m \delta t$, over the last fine-grid subinterval ${t \in [(m-1) \delta t, m \delta t]}$ it is the same as the local fine-grid characteristic that also arrives at $(x, y) = (x_i, y_j)$ at time $t = m \delta t$. 
As such, we use the departure point of this fine-grid characteristic as a final-time condition to initialize our approximation:
\begin{align}
\big(
c^{(m-1)}_{ij}, d^{(m-1)}_{ij}
\big) 
=
\big(
\xi_{ij}^{((m-1) \delta t, \delta t)}((m-1) \delta t), \eta_{ij}^{((m-1) \delta t, \delta t)}((m-1) \delta t) 
\big).
\end{align}

\begin{figure}[b!]
\centerline{
\includegraphics[scale=1]{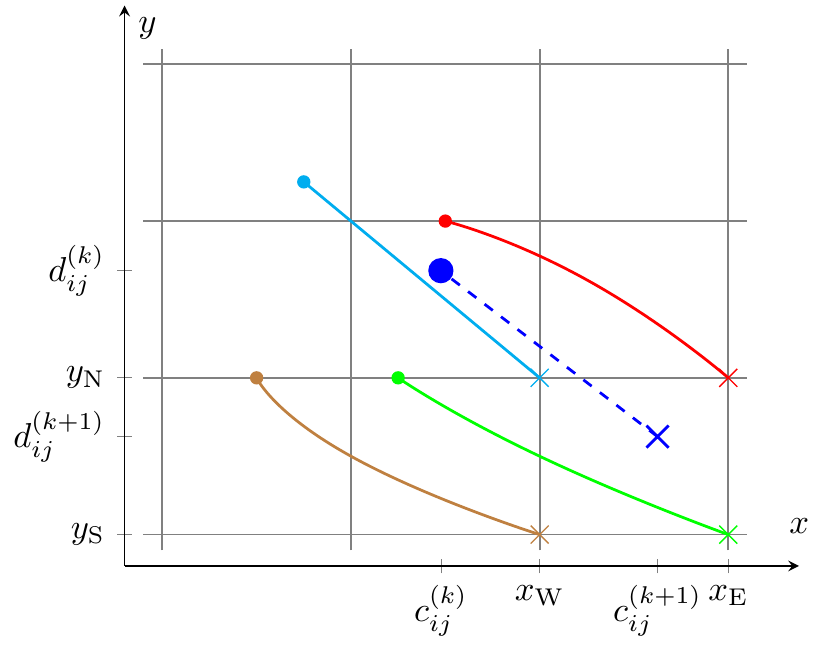}
}
\caption{
A bird's eye view of the four local fine-grid characteristics \eqref{SM:eq:char_2d_NE}, \eqref{SM:eq:char_2d_NW}, \eqref{SM:eq:char_2d_SE}, \eqref{SM:eq:char_2d_SW} (the solid red, cyan, green, and brown lines, respectively). 
These characteristics arrive at time $t = (k+1) \delta t$ at the mesh points (small crosses) that are the closest neighbors of the point $\big( c^{(k+1)}_{ij}, d^{(k+1)}_{ij} \big)$.
The departure points of these local fine-grid characteristics at time $t = k \delta t$ are marked as small circles. 
The point $\big( c^{(k+1)}_{ij}, d^{(k+1)}_{ij} \big)$ (large blue cross) is the approximate location of the coarse-grid characteristic \eqref{SM:eq:coarse_char_concern_2D} at time $t = (k+1) \delta t$.
As described in \Cref{SM:sec:backtrack_derivation}, the unknown point $\big( c^{(k)}_{ij}, d^{(k)}_{ij} \big)$ (large blue circle) is the approximate location of the coarse-grid characteristic \eqref{SM:eq:coarse_char_concern_2D} at time $t = k \delta t$, and is obtained by fitting a bilinear function to the paths of these neighboring fine-grid characteristics.
\label{SM:fig:char_track_2D}
}
\end{figure}

Given $\big( c^{(k+1)}_{ij}, d^{(k+1)}_{ij} \big)$, for a single $k \in \{ m-2, \ldots, 0 \}$, we now describe how to compute $\big( c^{(k)}_{ij}, d^{(k)}_{ij} \big)$; that is, how to propagate the coarse-grid characteristic from $t = (k+1) \delta t \to t = k \delta t$.
Define $(x,y) =\big( (x_{\rm E}, y_{\rm N}), (x_{\rm W}, y_{\rm N}), (x_{\rm E}, y_{\rm S}), (x_{\rm W}, y_{\rm S}) \big)$ as the four mesh points that are immediately to the north-east, north-west, south-east, and south-west, respectively, of $(x,y) = \big(c^{(k+1)}_{ij}, d^{(k+1)}_{ij} \big)$.
See the schematic in \Cref{SM:fig:char_track_2D}.
Furthermore, denote the four local fine-grid characteristics that \textit{arrive} at these four mesh points by
\begin{align} 
\label{SM:eq:char_2d_NE}
(x(t), y(t)) &= \big(\xi_{\rm NE}^{(k \delta t,  \delta t)}(t), \eta_{\rm NE}^{(k \delta t, \delta t)}(t) \big),
\\
\label{SM:eq:char_2d_NW}
(x(t), y(t)) &= \big(\xi_{\rm NW}^{(k \delta t,  \delta t)}(t), \eta_{\rm NW}^{(k \delta t, \delta t)}(t) \big),
\\
\label{SM:eq:char_2d_SE}
(x(t), y(t)) &= \big(\xi_{\rm SE}^{(k \delta t,  \delta t)}(t), \eta_{\rm SE}^{(k \delta t, \delta t)}(t) \big), 
\\
\label{SM:eq:char_2d_SW}
(x(t), y(t)) &= \big(\xi_{\rm SW}^{(k \delta t,  \delta t)}(t), \eta_{\rm SW}^{(k \delta t, \delta t)}(t) \big),
\end{align} 
respectively. 
Then, to approximate the $x$-component of the coarse-grid characteristic at time $t = k \delta t$, we simply fit a bilinear function to how the fine-grid characteristics \eqref{SM:eq:char_2d_NE}--\eqref{SM:eq:char_2d_SW} map the $x$-component of their arrival point into the $x$-component of their departure point:
\begin{align}
\begin{split}
&c^{(k)}_{ij}
=
p^{\rm 2d}\big( c^{(k+1)}_{ij}, d^{(k+1)}_{ij}; 
\\
&\quad
\xi_{\rm NE}^{(k \delta t,  \delta t)}(k \delta t),
\xi_{\rm NW}^{(k \delta t,  \delta t)}(k \delta t),
\xi_{\rm SE}^{(k \delta t,  \delta t)}(k \delta t),
\xi_{\rm SW}^{(k \delta t,  \delta t)}(k \delta t) 
\big).
\end{split}
\end{align}
To estimate the $y$-component of the coarse-grid characteristic at time $t = k \delta t$, we carry out the analogous procedure of fitting a bilinear function to how the fine-grid characteristics \eqref{SM:eq:char_2d_NE}--\eqref{SM:eq:char_2d_SW} map the $y$-component of their arrival point into the $y$-component of their departure point:
\begin{align}
\begin{split}
&d^{(k)}_{ij}
=
p^{\rm 2d}\big( c^{(k+1)}_{ij}, d^{(k+1)}_{ij}; 
\\
&\quad
\eta_{\rm NE}^{(k \delta t,  \delta t)}(k \delta t),
\eta_{\rm NW}^{(k \delta t,  \delta t)}(k \delta t),
\eta_{\rm SE}^{(k \delta t,  \delta t)}(k \delta t),
\eta_{\rm SW}^{(k \delta t,  \delta t)}(k \delta t) 
\big).
\end{split}
\end{align}


\end{document}